\DeclareMathOperator{\Ker}{Ker}
\DeclareMathOperator{\Tr}{Tr}
\DeclareMathOperator{\supp}{supp}
\DeclareMathOperator{\sech}{sech}
\DeclareMathOperator{\arcsech}{arcsech}
\DeclareMathOperator{\dn}{dn}
\DeclareMathOperator{\cn}{cn}
\DeclareMathOperator{\essinf}{ess\, inf}
\DeclareMathOperator{\esssup}{ess\, sup}
\newcommand{\abs}[1]{\lvert#1\rvert}
\newcommand{\Abs}[1]{\left\lvert#1\right\rvert}
\newcommand{\norm}[1]{\lVert#1\rVert}
\newcommand{\jap}[1]{\langle#1\rangle}
\newcommand{\bE}{{\mathbf E}}
\newcommand{\bG}{{\mathbf G}}
\newcommand{\bL}{{\mathbf L}}
\newcommand{\bbP}{{\mathbf P}}
\newcommand{\bbR}{{\mathbb R}}
\newcommand{\bbT}{{\mathbb T}}
\newcommand{\bbZ}{{\mathbb Z}}
\newcommand{\bH}{{\mathbf H}}
\newcommand{\bh}{{\mathbf h}}
\newcommand{\calE}{\mathcal{E}}
\newcommand{\calF}{\mathcal{F}}
\newcommand{\calU}{\mathcal{U}}
\numberwithin{equation}{section}
\theoremstyle{plain}
\newtheorem{theorem}{\bf Theorem}[section]
\newtheorem*{theorem*}{Theorem}
\newtheorem{lemma}[theorem]{\bf Lemma}
\newtheorem{proposition}[theorem]{\bf Proposition}
\newtheorem*{proposition*}{\bf Proposition}
\newtheorem{corollary}[theorem]{\bf Corollary}
\theoremstyle{definition}
\newtheorem{definition}[theorem]{\bf Definition}
\theoremstyle{remark}
\newtheorem*{remark*}{\bf Remark}
\newtheorem{remark}[theorem]{\bf Remark}
\newtheorem{example}[theorem]{\bf Example}
\newcommand{\wt}{\widetilde}
\newcommand{\eps}{\varepsilon}
\newcommand{\dd}{\mathrm d}
\newcommand{\ii}{\mathrm i}
\newcommand{\ee}{\mathrm e}
\begin{document}

\title[Ergodic Hankel operators]{Ergodic Hankel operators}

\author{Leonid Pastur}
\address{Department of Mathematics, King's College London, Strand, London, WC2R~2LS, United Kingdom}
\address{B. Verkin Institute for Low Temperature Physics and Engineering, 47 Nauky Avenue, Kharkiv, 61103 Ukraine}
\email{leonid.pastur@kcl.ac.uk}

\author{Alexander Pushnitski}
\address{Department of Mathematics, King's College London, Strand, London, WC2R~2LS, United Kingdom}
\email{alexander.pushnitski@kcl.ac.uk}

\subjclass[2020]{47A35,47B35}

\keywords{Ergodic operators, Hankel operators, Lifshitz tails, Anderson localisation}

\date{25 September 2025}

\begin{abstract}
We introduce a new class of operators: ergodic families of self-adjoint Hankel operators realised as integral operators on the half-line. Inspired by the spectral theory of differential and finite-difference operators with ergodic coefficients, we develop a spectral theory of ergodic Hankel operators. For these operators, we define the Integrated Density of States (IDS) measure and establish its fundamental properties. In particular, we determine the total mass of the IDS measure in the positive semi-definite case. We also consider in more detail two classes of ergodic Hankel operators for which we are able to make further progress: periodic Hankel operators and the random Kronig--Penney--Hankel (rKPH) model. For periodic Hankel operators, we prove that the IDS measure is a sum of a pure point and absolutely continuous components, and describe the structure of both components. For the rKPH model, we prove the counterparts of the cornerstone results of the spectral theory of random Schr\"odinger operators:  Lifshitz tails at the edges of the spectrum, the Wegner bound and Anderson localisation in a natural asymptotic regime. We conclude with some open problems. 
\end{abstract}

\maketitle

\setcounter{tocdepth}{1}
\tableofcontents

\section{Introduction}
\label{sec.a}

\subsection{Hankel operators}
\label{sec:a1}
Hankel operators can be represented in two different (but unitarily equivalent) forms: as infinite matrices acting on $\ell^2(\bbZ_+)$,
\begin{equation}
\ell^2(\bbZ_+)\ni x\mapsto \sum_{m=0}^\infty h_{n+m}x_m, \quad n\in\bbZ_+
\label{eq:a0}
\end{equation}
for some sequence $\{h_n\}_{n=0}^\infty$ or as integral operators acting on $L^2(\bbR_+)$, 
\begin{equation}
H: L^2(\bbR_+)\ni f\mapsto \int_0^\infty h(t+s)f(s)\dd s,\quad t>0,
\label{eq:a12}
\end{equation}
for some \emph{kernel function} $h$ on $\bbR_+$. 
See e.g. \cite[Chapter 1]{Peller} or \cite[Part B, Chapter 1]{Nikolski} for the details. There are also descriptions of Hankel operators in terms of Hardy spaces, but these are outside the scope of our work.

In this paper, we will work with  \eqref{eq:a12}, because we are interested in the interaction of Hankel operators with the unitary group of dilations acting on $L^2(\bbR_+)$, 
\[
(D_\tau f)(t)=e^{\tau/2}f(e^\tau t), \quad t>0,
\]
where $\tau\in\bbR$. Throughout the paper, $H$ will denote the Hankel operator on $L^2(\bbR_+)$ defined by \eqref{eq:a12}. In general, the kernel function $h$ may be a distribution (see  \cite[Chapter 1]{Peller}), but in concrete situations that are of interest to us below, it will be a  measurable function on $\bbR_+$, sufficiently regular away from zero. 

We will only be interested in \emph{bounded} and \emph{self-adjoint} operators ${H}$. 
We will discuss boundedness in a few lines below. Since the integral kernel $h(t+s)$ of ${H}$ is symmetric, self-adjointness is equivalent to $h$ being real-valued.
 
As a warm-up, we discuss three examples. 

\begin{example}[Finite rank operators]\label{exa:r1}
If $h(t)=\ee^{-\alpha t}$ with some $\alpha>0$, then ${H}$ is a rank one operator. 
Of course, if $h$ is a finite linear combination of such exponentials with real coefficients, then $H$ is a  finite rank self-adjoint operator. 
\end{example}

\begin{example}[Carleman operator]\label{exa:Carl}
If $h(t)=1/t$, the operator ${H}$ is known as the \emph{Carleman operator}. This operator is self-adjoint and bounded but not compact. Its spectrum is purely absolutely continuous, coincides with the interval $[0,\pi]$ and has multiplicity two (see e.g. \cite[Section~10.2]{Peller}). 
\end{example}

We pause to display the boundedness of the Carleman operator as the estimate
\begin{equation}
\Abs{\int_0^\infty \int_0^\infty\frac{f(t)\overline{f(s)}}{t+s}dt\, ds}
\leq
\pi \norm{f}_{L^2(\bbR_+)}^2
\label{eq:a7}
\end{equation}
for future reference. 

Throughout the paper, we will assume that our kernel functions satisfy the bound 
\begin{equation}
\abs{h(t)}\leq C_h/t,\quad t>0,
\label{eq:a1}
\end{equation}
with some $t$-independent $C_h<\infty$. 
From here by \eqref{eq:a7} it immediately follows that ${H}$ is bounded on $L^2(\bbR_+)$, with the operator norm bound 
\[
\norm{{H}}\leq \pi C_h.
\]
Note that condition \eqref{eq:a1} is sufficient, but not necessary for the boundedness of ${H}$. A necessary and sufficient condition is given by Nehari's theorem, see e.g. \cite[Section~1.8]{Peller}.

\begin{example}[The Mathieu--Hankel operator and periodic operators]
Consider the kernel function 
\[
h(t)=\frac{a+\cos(\log t)}{t}, \quad t\in\bbR_+
\]
with $a\in\bbR$. The corresponding Hankel operator was introduced in \cite{PuSobolev} and dubbed the \emph{Mathieu--Hankel operator}, by analogy with the Mathieu operator
\[
-\frac{\dd^2}{\dd x^2}+\cos x, \quad x\in\bbR.
\]
The spectral analysis of the Mathieu--Hankel operator turns out to be an interesting and rich problem.  This operator commutes with dilations $D_{2\pi}$ by the factor $\ee^{2\pi}$ in $L^2(\bbR_+)$. Consequently, its spectrum has a band structure, with bands accumulating to zero, and some bands may degenerate into points (depending on the parameter $a$). 

More generally, let 
\begin{equation}
h(t)=\frac{P(\log t)}{t},
\label{eq:a14}
\end{equation}
where $P$ is a $\tau$-periodic real-valued function on $\bbR$, 
\begin{equation}
P(\xi)=P(\xi+\tau), \quad \xi\in\bbR.
\label{eq:a14a}
\end{equation}
Then the corresponding operator ${H}$ commutes with dilations by the factor $\ee^{\tau}$:
\begin{equation}
{H}D_\tau=D_\tau{H}.
\label{eq:a6}
\end{equation}
In \cite{PuSobolev}, such operators were termed \emph{periodic Hankel operators} and a detailed spectral theory was developed for them, which exhibits numerous parallels to the Floquet--Bloch theory of periodic Schr\"odinger operators. We will say more about periodic Hankel operators in Sections~\ref{sec.d} and \ref{sec.dd} below. 
\end{example}

\subsection{Ergodic Hankel operators}
Motivated in part by periodic Hankel operators, in this paper we consider \emph{ergodic Hankel operators ${H}_\omega$.} Here we briefly and informally explain our framework; precise definitions are given in Section~\ref{sec.b} below. 

Let $T$ be an ergodic measure-preserving automorphism on a probability space $\Omega$. We consider families $\{H_\omega\}_{\omega\in\Omega}$ of bounded self-adjoint Hankel operators satisfying the ergodicity condition 
\begin{equation}
H_{T\omega}D_\tau=D_\tau H_\omega
\label{eq:eHa}
\end{equation}
for some $\tau>0$ (compare with \eqref{eq:a6}). We will refer to the family $\{H_\omega\}_{\omega\in\Omega}$ as an \emph{ergodic Hankel operator} (we use a singular form, following the pattern of \emph{random variable}). Equivalently, ergodic Hankel operators are operators with kernel functions
\[
h_\omega(t)=\frac{P_\omega(\log t)}{t}, 
\]
where $P_\omega$ is a real-valued ergodic process on the real line, i.e. 
\[
P_{T\omega}(\xi)=P_\omega(\xi+\tau)
\]
(compare with \eqref{eq:a14a}). 
We note that assumption \eqref{eq:a1} is equivalent to 
\begin{equation}
\sup_{\xi\in\bbR}\abs{P_\omega(\xi)}\leq C_h. 
\label{eq:a1a}
\end{equation}

Standard theory of ergodic operators (see \cite[Section 1.D]{Pa-Fi:92}, where the term \emph{metrically transitive} instead of \emph{ergodic} is used) is usually presented for operators acting on $L^2(\bbR)$ and satisfying the relation of the type \eqref{eq:eHa} with $D_\tau$ replaced by the shift operator in $L^2(\bbR)$, 
\begin{equation}
(U_\tau f)(\xi)=f(\xi+\tau), \quad x\in\bbR. 
\label{eq:b3}
\end{equation}
Because of this, we find it convenient to ``transplant'' Hankel operators to $L^2(\bbR)$ by using the exponential change of variable $t=\ee^\xi$. Let $\calE: L^2(\bbR_+)\to L^2(\bbR)$ be the unitary operator
\[
(\calE f)(\xi)=\ee^{\xi/2}f(\ee^\xi), \quad \xi\in\bbR.
\]
For a bounded Hankel operator ${H}$, we will consider the operator 
\begin{equation}
\bH=\calE{H} \calE^*\quad \text{ in $L^2(\bbR)$.}
\label{eq:KE}
\end{equation}
Throughout the paper, we assume that operators $H$ in $L^2(\bbR_+)$  and $\bH$ in $L^2(\bbR)$ are linked in this way, and we also write $\bH_\omega=\calE{H}_\omega \calE^*$ for an ergodic Hankel operator $H_\omega$. We will refer to $\bH$ (or $\bH_\omega$) as Hankel operators in $L^2(\bbR)$.

A direct computation shows that $\bH$ has the integral kernel
\[
\bH(x,y)=\ee^{\frac{x+y}{2}}h(\ee^x+\ee^y),\quad x,y\in\bbR,
\]
where $h$ is the kernel function of ${H}$. In particular, if the kernel function is written as in \eqref{eq:a14} (with any $P$), then 
\begin{equation}
\bH(x,y)
=\frac{P(\log(\ee^x+\ee^y))}{2\cosh\frac{y-x}{2}}
=\frac{P(x+\log(1+\ee^{y-x}))}{2\cosh\frac{y-x}{2}}. 
\label{eq:a2a}
\end{equation}

Since $\calE$ maps dilations into shifts $U_\tau=\calE D_\tau\calE^*$,
ergodic Hankel operators $\bH_\omega$ in $L^2(\bbR)$ satisfy 
\[
\bH_{T\omega}U_\tau=U_\tau \bH_\omega,
\]
which allows the application of the standard version of ergodic spectral theory in $L^2(\bbR)$.

\subsection{The structure of the paper and overview of main results}
In Section~\ref{sec.aa} we explain that positive semi-definite Hankel operators play a special role in the theory, as the most tractable class with remarkable properties. Throughout the paper, we use the term \emph{positive} to mean \emph{positive semi-definite}. 

In Section~\ref{sec.bb} we recall necessary background from ergodic theory in the form that we will use. In Section~\ref{sec.b} we define ergodic Hankel operators and discuss some examples in general terms. We will see that as a consequence of general theory of ergodic operators, the spectrum of an ergodic Hankel operator $\bH_\omega$ is deterministic (i.e. independent of $\omega$ almost surely).

In Section~\ref{sec.c} we introduce the \emph{Integrated Density of States} (IDS) measure $\nu$ of $\bH_\omega$, an important spectral characteristic of ergodic operators. 
In Section~\ref{sec:szego} we give two alternative characterisations of this measure. 
In Section~\ref{sec.cc} we study the IDS measure for positive Hankel operators and in particular determine the total mass of this measure. We also prove that for positive ergodic Hankel operators, the IDS measure is continuous, while in general (in the absence of positivity) it may be a pure point measure.

The rest of the paper is devoted to two classes of Hankel operators where we are able to make further progress: periodic Hankel operators (Sections~\ref{sec.d} and \ref{sec.dd}) and random Hankel operators (Section~\ref{sec.g}). These classes represent two ``extreme points'' (in terms of the ``amount'' of randomness) of the broad class of ergodic operators. 

For periodic Hankel operators, we prove that the IDS measure is a sum of a pure point and an absolutely continuous components, and describe the structure of both components. For random Hankel operators, we focus on the class that we call \emph{random Kronig--Penney--Hankel model} (rKPH) by analogy with the Kronig--Penney model in the theory of Schr\"odinger operators, see e.g. \cite{Al-Co:12}. For the rKPH model, we establish the three basic facts of spectral theory of ergodic operators:  the Lifshitz tails asymptotics at spectral edges, the Wegner estimate and Anderson type localisation (a.s. point spectrum) in a natural asymptotic regime. Here we heavily rely on the results of our closely related paper \cite{Pa-Pu}.

\subsection{Hankel and Schr\"odinger}
There are numerous similarities between the spectral theory of Hankel operators and the spectral theory of Schr\"odinger operators. These similarities were first mentioned in print by J.~Howland \cite{Howland}. More recently they were advertised by D.~Yafaev, both in print \cite{Ya3} and in seminar talks. While the origin of these similarities remains somewhat mysterious (at least to the present authors), it is becoming clear that they involve, on one hand, \emph{positive} Hankel operators and on the other hand Schr\"odinger operators \emph{on the real line,}
\begin{equation}
-\frac{\dd^2}{\dd x^2}+q(x), \quad x\in\bbR.
\label{eq:Schrod}
\end{equation}
In particular, the results (but not the technique!) of the spectral theory of positive periodic Hankel operators, developed in \cite{PuSobolev}, are to a large extent identical to the results of the spectral theory of periodic  Schr\"odinger operators \eqref{eq:Schrod}. The results of this paper confirm and extend this analogy. 

There are, however, two notable aspects where ``Hankel theory'' differs from ``Schr\"odinger theory''. The first one is that the kernels of Hankel operators are either trivial or infinite dimensional, and the spectral analysis of these two cases proceeds in two different ways. There is no obvious analogy to this phenomenon in the ``Schr\"odinger theory''. 

The second aspect is that Schr\"odinger operators \eqref{eq:Schrod} are always unbounded, while Hankel operators can be both bounded and unbounded, and in fact interesting features of the theory appear already for bounded operators. (In this paper, we focus exclusively on bounded Hankel operators, although it is possible to work with unbounded operators as well, see e.g. \cite{PuTreil1,Ya4}.) Moreover, there is some apparent analogy between the spectral properties of Schr\"odinger operators at infinity and Hankel operators at zero. In this respect, it is more appropriate to compare Hankel operators with \emph{resolvents} of Schr\"odinger operators.

At the technical level, the most obvious fundamental difficulty of the ``Hankel theory'' is that there is currently no machinery to interpret the eigenvalue equation for Hankel operators in a language similar to the language of differential equations for Schr\"odinger operators. Thus, the notion of a Cauchy solution to the differential equation and associated efficient techniques (Sturm oscillation theorems, transfer matrices, Lyapunov exponents, $m$-functions, see e.g. \cite[Chapters III, V, VII]{Pa-Fi:92}) is not available for Hankel operators.

Finally, we note that non-positive Hankel operators are much more complicated than the positive ones. For example (see Section~\ref{sec:d6}), they can have non-zero eigenvalues of infinite multiplicity. In our view, comparing them to Schr\"odinger operators would not be accurate.

\subsection{Notation}
If $A$ is an integral operator in $L^2(\bbR)$ (or in $L^2(\bbR_+)$), we denote by $A(x,y)$, $x,y\in\bbR$ (or $x,y\in\bbR_+$), the integral kernel of $A$. We denote the inner product of elements $f$ and $g$ of a Hilbert space by $\jap{f,g}$, with a subscript indicating the Hilbert space if necessary. Our inner products are linear in $f$ and anti-linear in $g$. 
For a set $\Delta\subset\bbR$, we denote by $\chi_\Delta$ the characteristic function of $\Delta$. Throughout the text, $C$ denotes a generic constant in estimates, independent of the essential variables, whose value may change from one line to the next.

\subsection{Acknowledgements}
We are grateful to Alexander Sobolev for useful discussions.

\section{Positive Hankel operators}
\label{sec.aa}

\subsection{The Megretskii--Peller--Treil theorem}
In order to explain the role of positive Hankel operators in the theory, we start by recalling the Megretskii--Peller--Treil theorem. 

\begin{theorem}\cite[Theorem~1]{MPT}\label{thm.MPT}
A bounded self-adjoint operator ${H}$ with a scalar spectral measure $\mu$ and the spectral multiplicity function $m$ is unitarily equivalent to a Hankel operator if and only if the following conditions are satisfied:
\begin{enumerate}[\rm (i)]
\item
either $\Ker {H}=\{0\}$ or $\dim\Ker {H}=\infty$;
\item
${H}$ is not invertible;
\item
$\abs{m(t)-m(-t)}\leq2$ almost everywhere with respect to the absolutely continuous part of $\mu$;
\item
$\abs{m(t)-m(-t)}\leq1$ almost everywhere with respect to the singular part of $\mu$. 
\end{enumerate}
\end{theorem}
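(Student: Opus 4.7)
The plan is to attack the two implications separately, starting from the structural identity that characterises Hankel matrices. In the $\ell^2(\bbZ_+)$ model \eqref{eq:a0}, a bounded matrix $\Gamma$ is Hankel precisely when $S^*\Gamma=\Gamma S$, where $S$ is the unilateral shift. This single intertwining relation, together with self-adjointness, is what drives conditions (i)--(iv) on the necessity side. For (i), if $f\in\Ker\Gamma$ and $f\ne0$, then $\Gamma Sf=S^*\Gamma f=0$, so $Sf\in\Ker\Gamma$; iterating and using that $S$ is an isometry with powers converging weakly to $0$ produces an infinite orthonormal system in $\Ker\Gamma$. For (ii), an invertible $\Gamma$ would intertwine $S^*$ with $S$ via $\Gamma^{-1}S^*\Gamma=S$, contradicting the fact that $S$ and $S^*$ are not similar (their kernels have different dimensions).

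For the multiplicity bounds (iii)--(iv), the key observation is that $\Gamma$ carries an additional built-in anti-symmetry: combining $S^*\Gamma=\Gamma S$ with self-adjointness and the standard conjugation $J$ on $\ell^2(\bbZ_+)$ yields a relation of the form $J\Gamma J=\Gamma$ paired with $S^2\Gamma-\Gamma (S^*)^2=R$ where $R$ is a rank-$2$ perturbation. Taking spectral projections $E_\Gamma(\Delta)$ for intervals $\Delta$ symmetric and antisymmetric about $0$ and analysing how this rank-$2$ defect couples the parts of the spectrum in $\Delta$ and $-\Delta$ leads, via a Kato--Rosenblum-type calculation, to the bound $|m(t)-m(-t)|\le 2$ on the a.c.\ part. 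The sharper bound $|m(t)-m(-t)|\le 1$ on the singular part comes from the fact that on the singular spectrum the wave operators for the rank-$2$ perturbation capture one extra degree of freedom, so only a rank-$1$ discrepancy survives.

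For the sufficiency direction I would build a concrete Hankel realisation of any abstract $(A,\mu,m)$ satisfying (i)--(iv) in three stages. Stage 1: reduce to $\Ker A=\{0\}$ using (i), absorbing an infinite-dimensional kernel into an extra copy of absolutely continuous multiplicity. Stage 2: build the absolutely continuous part by means of an Adamyan--Arov--Krein symbol construction; a real symbol $\varphi$ on the circle generates a self-adjoint Hankel matrix whose a.c.\ spectrum and multiplicity (up to the maximal value $2$) can be tuned through the inner-outer factorisation of $\varphi_+$. Stage 3: superimpose the singular part by taking a direct sum with explicit Hankel operators whose spectral measures approximate prescribed singular measures, and passing to a weak-operator limit that preserves the Hankel structure because the class of Hankel operators is weakly closed.

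The principal obstacle will be Stage 3. One must simultaneously (a) preserve the Hankel structure through an approximation, (b) realise an arbitrary singular measure with prescribed multiplicity at most $1$, and (c) ensure that the limiting operator does not accidentally acquire extra a.c.\ mass or an invertible direction (which would violate (ii)) or a finite-dimensional kernel (violating (i)). Tracking the asymmetric bounds (iii) versus (iv) during this limiting procedure is what makes this the deepest part of the argument, and essentially all the technical work of \cite{MPT} is concentrated here; I would expect to spend most of the effort constructing a careful approximation scheme with Hankel building blocks of controlled spectral type and verifying the invariance of the multiplicity function under the limit.
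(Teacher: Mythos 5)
First, a point of calibration: the paper does not prove this statement at all. It is quoted verbatim from \cite{MPT} as an external input, and the authors explicitly remark that the proof of the ``if'' part is very difficult. So there is no in-paper proof to compare against; what follows is an assessment of your sketch on its own terms.

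Your necessity arguments for (i) and (ii) are correct and are indeed the standard ones: the intertwining $S^*\Gamma=\Gamma S$ characterises Hankel matrices, $\Ker\Gamma$ is $S$-invariant (and $S$-invariance of a nonzero subspace forces infinite dimension, since $f, Sf, S^2f,\dots$ are linearly independent for $f\neq0$), and invertibility would make $S$ similar to $S^*$, which is impossible. The necessity of (iii)--(iv) is where your sketch goes wrong. The claimed identity ``$S^2\Gamma-\Gamma(S^*)^2=R$ with $R$ of rank $2$'' is not what the Hankel relation gives: from $S^*\Gamma=\Gamma S$ one gets $(S^*)^2\Gamma=\Gamma S^2$ \emph{exactly}, while $S^2\Gamma-\Gamma(S^*)^2$ is a skew-adjoint finite-rank operator supported on the first two rows and columns (generically of rank $4$, not $2$). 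The identity that actually drives the multiplicity analysis is $\Gamma^2-S^*\Gamma^2S=\Gamma P_0\Gamma$, a positive rank-one defect, together with the observation that $\Gamma_1=\Gamma S=S^*\Gamma$ is again a self-adjoint Hankel operator whose square is the compression of $\Gamma^2$ to $\{e_0\}^{\perp}$. More importantly, ``a Kato--Rosenblum-type calculation'' cannot produce the bounds in (iii)--(iv): Kato--Rosenblum controls the absolutely continuous \emph{type} under trace-class perturbations, not the pointwise difference $\abs{m(t)-m(-t)}$ between multiplicities at $t$ and $-t$, and it says nothing about the singular part, where the bound is the sharper $\leq 1$. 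This step of your argument has no actual content as written.

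The sufficiency direction is where essentially all of the difficulty of \cite{MPT} lives, and your Stage 3 contains a gap that I think is fatal to the proposed route. It is true that the set of bounded Hankel operators is closed in the weak operator topology (it is cut out by WOT-continuous linear constraints), but spectral type and the multiplicity function are wildly discontinuous under WOT limits: a weak limit of operators with purely singular spectrum can be absolutely continuous, kernels can appear or disappear, and there is no mechanism in your scheme for ``verifying the invariance of the multiplicity function under the limit.'' Also, superimposing the singular part ``by taking a direct sum'' is not available, since a direct sum of Hankel operators is not a Hankel operator and need not be unitarily equivalent to one without re-verifying (i)--(iv) for the sum. The actual construction in \cite{MPT} proceeds by an entirely different route (balanced linear-systems realisations and a fixed-point argument) precisely because no naive approximation scheme controls the spectral data in the limit. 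In short: (i) and (ii) necessity are fine, (iii)--(iv) necessity is incorrectly argued, and the sufficiency plan identifies the right obstacle but offers no way past it.
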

We note that the proof of the ``if'' part of this theorem is very difficult. 
Of importance to us is the following consequence of the ``only if'' part of the theorem.

\begin{corollary}\label{crl:MPT}
Let ${H}$ be a bounded positive Hankel operator. Then the absolutely continuous spectrum of the restriction ${H}|_{(\Ker {H})^\perp}$ has multiplicity $\leq2$, while the singular spectrum (in particular, the point spectrum) of ${H}|_{(\Ker {H})^\perp}$ has multiplicity one.  
\end{corollary}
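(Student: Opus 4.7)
The plan is to derive the corollary as a direct consequence of the ``only if'' direction of the Megretskii--Peller--Treil theorem (Theorem~\ref{thm.MPT}), together with positivity.

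First I would observe that since $H\geq 0$ is bounded, its spectrum is contained in $[0,\norm{H}]$, and hence the scalar spectral measure $\mu$ is supported in $[0,\infty)$. It follows that the spectral multiplicity function $m$ satisfies $m(-t)=0$ for every $t>0$ (indeed, at points outside $\supp\mu$ the function $m$ may be taken to vanish).

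Next I would insert this information into conditions (iii) and (iv) of Theorem~\ref{thm.MPT}. For $t>0$, condition (iii) gives $m(t)=\abs{m(t)-m(-t)}\leq 2$ for $\mu_{ac}$-almost every $t$, and condition (iv) gives $m(t)\leq 1$ for $\mu_{sing}$-almost every $t$. In other words, the absolutely continuous spectrum of $H$ in $(0,\infty)$ has multiplicity at most $2$, and its singular spectrum in $(0,\infty)$ has multiplicity at most $1$; in particular every positive eigenvalue of $H$ is simple.

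Finally I would pass to the restriction $H|_{(\Ker H)^\perp}$. By condition (i) of Theorem~\ref{thm.MPT}, $\Ker H$ is either trivial or infinite-dimensional; restricting to $(\Ker H)^\perp$ simply removes the point $\{0\}$ from the pure point part of the spectrum. Since the multiplicity bounds above were established for $t>0$, they carry over verbatim to $H|_{(\Ker H)^\perp}$, yielding the stated conclusion.

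There is no real obstacle here: the corollary is essentially a bookkeeping step. The only mildly delicate point is the treatment of the spectral behaviour at the endpoint $t=0$, which is handled automatically by the fact that the multiplicity bounds from MPT refer to $\mu$-almost every point and that restricting to $(\Ker H)^\perp$ affects only the atom of $\mu$ at the origin.
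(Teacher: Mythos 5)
Your proposal is correct and is exactly the intended derivation: the paper states the corollary as an immediate consequence of the ``only if'' direction of Theorem~\ref{thm.MPT}, and the argument it has in mind is precisely yours --- positivity forces $m(-t)=0$ for $t>0$, so conditions (iii) and (iv) collapse to $m(t)\leq 2$ ($\mu_{\mathrm{ac}}$-a.e.) and $m(t)\leq 1$ ($\mu_{\mathrm{sing}}$-a.e.), and passing to $(\Ker H)^\perp$ only discards the atom at the origin. No gaps.
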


This corollary indicates one more similarity between positive Hankel operators (restricted to the orthogonal complement to its kernel) and the full-line Schr\"odinger operators \eqref{eq:Schrod}. For the corresponding statement on the spectral multiplicity of \eqref{eq:Schrod}, see, e.g.  \cite[Theorem A]{Kac}. 

\subsection{Positive Hankel operators in $L^2(\bbR_+)$}\label{sec:a2}
A bounded Hankel operator ${H}$ in $L^2(\bbR_+)$ (see \eqref{eq:a12}) is positive  if and only if the corresponding kernel function $h$ can be represented as the Laplace transform of a (positive) measure $\sigma$ on $\bbR_+$
\begin{equation}
h(t)=\int_0^\infty \ee^{-tx}\dd\sigma(x), \quad t>0,
\label{eq:a8}
\end{equation}
where $\sigma(\{0\})=0$ and the Carleson condition 
\begin{equation}
\sigma((0,a))\leq C_\sigma a, \quad \forall a>0, 
\label{eq:a9}
\end{equation}
is satisfied with some $C_\sigma>0$. Of course, $\sigma$ is uniquely defined by $h$. A proof of the integral representation \eqref{eq:a8} and a detailed discussion can be found, for example, in \cite[Theorems 5.1 and 5.3]{Ya1}. The equivalence of the boundedness of ${H}$ and the Carleson condition was observed by Widom \cite{Widom66}. We note that a one-line argument with integration by parts shows that the Carleson condition \eqref{eq:a9} implies the bound \eqref{eq:a1} with $C_h=C_\sigma$. 
We also note that the representation \eqref{eq:a8} ensures that $h\in C^\infty(\bbR_+)$.

\begin{example}
Let $\sigma$ be a single point mass at $\alpha>0$ with the total mass $2\alpha$. Then $h(t)=2\alpha\,  e^{-\alpha t}$ and the corresponding Hankel operator ${H}$ has rank one and coincides with the orthogonal projection onto the one-dimensional subspace spanned by the function $e^{-\alpha t}$ in $L^2(\bbR_+)$ (cf. Example~\ref{exa:r1}). Of course, if $\sigma$ is a finite linear combination of point masses, then ${H}$ is a positive finite rank operator. 
\end{example}

\begin{example}
Let $\sigma$ be the Lebesgue measure restricted to $\bbR_+$. Then $h(t)=1/t$ and the corresponding Hankel operator is the Carleman operator.  
\end{example}

\subsection{Positive Hankel operators in $L^2(\bbR)$}\label{sec:aa3}
Let ${H}$ be a positive Hankel operator in $L^2(\bbR_+)$, and $\bH$ be the corresponding Hankel operator in $L^2(\bbR)$, see \eqref{eq:KE}. Let $\sigma$ be the measure in the integral representation \eqref{eq:a8}, and let $\Sigma$ be the measure on $\bbR$, related to $\sigma$ through the exponential change of variable $t=\ee^{-\xi}$ as follows:
\[
\Sigma((a,b))=\int_{\ee^{-b}}^{\ee^{-a}}\frac{\dd\sigma(t)}{t}. 
\]
For example, if $\sigma$ is absolutely continuous with the density $s$, i.e. $\dd\sigma(t)=s(t)\dd t$, then $\dd\Sigma(\xi)=S(\xi)\dd\xi$, where $S(\xi)=s(\ee^{-\xi})$.  Rewriting \eqref{eq:a8} in terms of $\Sigma$, we find 
\[
h(t)=\int_{-\infty}^\infty \ee^{-t\ee^{-\xi}}\ee^{-\xi}\dd\Sigma(\xi), \quad t>0.
\]
Furthermore, a simple computation shows that the integral kernel of $\bH$ is
\begin{equation}
\bH(x,y)=\int_{-\infty}^\infty \beta(x-\xi)\beta(y-\xi)\dd\Sigma(\xi), 
\quad\text{ where }\quad
\beta(\xi)=\ee^{-\ee^\xi}\ee^{\xi/2}. 
\label{eq:a4}
\end{equation}
We will often think of the measure $\Sigma$ on the real line as of the basic functional parameter defining the positive Hankel operator $\bH$, akin to the potential $q$ defining the Schr\"odinger operator \eqref{eq:Schrod}.

By an elementary calculation (see Appendix), the Carleson condition \eqref{eq:a9} is equivalent to the \emph{uniform local boundedness} condition 
\begin{equation}
\Sigma((x-1,x+1))\leq C_\Sigma, \quad x\in\bbR
\label{eq:a13}
\end{equation}
with some $C_\Sigma$ independent of $x$. 

If ${H}$ is a positive $\tau$-periodic Hankel operator \eqref{eq:a6}, then it is not difficult to check that the measure $\Sigma$ in the representation \eqref{eq:a4} is also $\tau$-periodic.

\subsection{A pseudodifferential point of view}
For the purposes of the informal discussion here, let us assume that the measure $\Sigma$ is absolutely continuous, $\dd\Sigma(\xi)=S(\xi)\dd\xi$. 
By \eqref{eq:a4}, the Hankel operator $\bH$ can be viewed as the product (right to left) of the convolution with $\beta(-x)$,  multiplication by $S(\xi)$ and the convolution with $\beta(x)$. 
Observe that 
\begin{align*}
\widehat \beta(u)&=\frac1{\sqrt{2\pi}}\int_{-\infty}^\infty \beta(x)\ee^{-\ii ux}\dd x
=\frac1{\sqrt{2\pi}}\int_{-\infty}^\infty \ee^{-\ee^x}\ee^{x/2}\ee^{-\ii ux}\dd x
\\
&=\frac1{\sqrt{2\pi}}\int_0^\infty \ee^{-\lambda}\lambda^{-\frac12-\ii u}\dd\lambda
=\frac1{\sqrt{2\pi}}\Gamma(\tfrac12-\ii u),
\end{align*}
with the Gamma function in the right-hand side. This shows that the operator of 
convolution with $\beta$ can be written as the pseudodifferential operator $\Gamma(\frac12-\frac{\dd}{\dd x})$. 
Putting this together, we find that $\bH$ can be viewed as the pseudodifferential operator 
\begin{equation}
\bH=\Gamma(\tfrac12-\tfrac{\dd}{\dd x})\Sigma\Gamma(\tfrac12-\tfrac{\dd}{\dd x})^*,
\label{eq:a5}
\end{equation}
in $L^2(\bbR)$, where $\Sigma$ denotes the operator of multiplication by the density $S$.

This representation shows that the theory of positive Hankel operators can be viewed as the theory of a very special class of pseudodifferential operators. Note that the operator $\Gamma(\tfrac12-\tfrac{\dd}{\dd x})$ is extremely ``smooth'', i.e. it is a convolution with the Schwartz class function $\beta$, while the operator $\Sigma$ is in general singular, i.e. it represents a measure. This is, of course, quite far from the classical theory of pseudodifferential operators. Regardless of these technical details, the pseudodifferential representation \eqref{eq:a5} for $\bH$ can be quite useful. It has been studied systematically by Yafaev \cite{yaf_apde_2015,Ya1} and an equivalent form of it goes back to Widom \cite{Widom66}. 

We will not use the pseudodifferential form \eqref{eq:a5} explicitly, but it turns out to be a source of insight and guidance in spectral theory of Hankel operators.

\section{Minimal background on ergodic theory}
\label{sec.bb}

In this section, we briefly recall necessary definitions and concepts of ergodic spectral theory adapted for our purposes; for details, see e.g. \cite{Aiz-War,Da-Fi:24,Pa-Fi:92}.

\subsection{Ergodic automorphisms}\label{sec:bb1}
Let $(\Omega,\calF,\bbP)$ be a probability space, where $\Omega$ is a set, $\calF$ is a sigma-algebra of measurable subsets on $\Omega$ and $\bbP$ is a probability measure defined on $\calF$. We denote by $\bE$ the expectation operation associated with $\bbP$, and write \emph{almost surely} (a.s.) for a proposition that holds true on a subset of $\Omega$ of full measure.

We will consider families of \emph{measure preserving} automorphisms of $\Omega$. These families will be parametrised either by $\bbR$ (the \emph{continuous case}) or by $\bbZ$ (the \emph{discrete case}). 
We denote these families of automorphisms by $\{T_a\}$, where $a\in\bbR$ or $a\in\bbZ$. In both discrete and continuous cases, we assume that $\{T_a\}$ is a \emph{group} and that $\{T_a\}$ are \emph{ergodic}, i.e. if a measurable set $\Delta\subset\Omega$ satisfies $T_a^{-1}(\Delta)=\Delta$ for all $a$, then $\Delta$ has measure $0$ or $1$. 

In the discrete case, the group structure implies that $T_a=(T_1)^a$, $a\in\bbZ$, and so it suffices to check the ergodicity condition for a single transformation $T=T_1$.

\begin{example}\label{exa:c1}
Let $\Omega=\bbT=\bbR/\bbZ$ be the unit circle with the normalised Lebesgue measure. Then the rotations 
\[
T_a \omega=\omega+a, \quad a\in\bbR
\]
is a group of ergodic automorphisms (continuous case). 
\end{example}

\begin{example}\label{exa:c2}
Let $\Omega=\bbT^d$ be the $d$-dimensional torus with the usual Lebesgue measure. Let $\alpha_1,\dots,\alpha_d$ be \emph{rationally independent} real numbers. Then the rotations
\[
T_a(\omega_1,\dots,\omega_d)=(\omega_1+\alpha_1 a, \dots,\omega_d+\alpha_d a), \quad a\in\bbR
\]
is a group of ergodic automorphisms (continuous case). See e.g. \cite[Section 3.1]{CSF}.
\end{example}

\begin{example}\label{exa:c3}
Let $\bbP_0$ be a probability measure on $\bbR$, and $\Omega$ be the infinite product $\Omega=\bbR^{\bbZ}$ with the product measure $\bbP=\bbP_0^{\bbZ}$. The shift transformation 
\[
(T\omega)_n=\omega_{n+1}
\]
on $\Omega$ is ergodic, and  $\{T^a\}_{a\in\bbZ}$ is a group of ergodic automorphisms (discrete case). 
\end{example}

Note that Examples~\ref{exa:c1} and \ref{exa:c3} are the ``extreme points'' with respect to the ``amount'' of randomness. 

\subsection{Ergodic processes}
A family $\{f_\omega\}_{\omega\in\Omega}$ of complex-valued functions  on $\bbR$ is called an  \emph{ergodic process}, if for any $\xi\in\bbR$ the function $\omega\mapsto f_\omega(\xi)$ is measurable and 
\begin{equation}
f_{T_a\omega}(\xi)=f_\omega(\xi+a), \quad \xi\in\bbR,
\label{eq:b2}
\end{equation}
for any $a\in\bbR$ in the continuous  case and for any $a\in\bbZ$ in the discrete  case.  It will be convenient to introduce an additional parameter $\tau>0$ (we will refer to it as a \emph{period}) and instead of \eqref{eq:b2} require 
\begin{equation}
f_{T_a\omega}(\xi)=f_\omega(\xi+\tau a), \quad \xi\in\bbR.
\label{eq:b2-tau}
\end{equation}
Of course, $\tau$ can be absorbed into $a$ in the continuous case but not in the discrete case. In order to keep the exposition in the discrete and continuous cases parallel to each other, we will keep the period $\tau$ in both cases.

In the discrete case, it suffices to require \eqref{eq:b2-tau} for $a=1$, i.e. 
\begin{equation}
f_{T\omega}(\xi)=f_\omega(\xi+\tau), \quad \xi\in\bbR.
\label{eq:b2-1}
\end{equation}
In the continuous case, one can give an alternative version of definition \eqref{eq:b2-tau}. Let $F:\Omega\to\bbR$ be a measurable function (called a \emph{sample function} in this context). Set 
\[
f_\omega(\xi)=F(T_{\xi/\tau}\omega).
\]
Then $f_\omega$ satisfies \eqref{eq:b2-tau} and $f_\omega(0)=F(\omega)$.

In a similar way, a family of Borel measures $\Sigma_\omega$ on $\bbR$ is called an \emph{ergodic family} if for any Borel set $\Delta\subset \bbR$ the function $\omega\mapsto \Sigma_\omega(\Delta)$ is measurable, and if 
\[
\Sigma_{T_a\omega}(\Delta)=\Sigma_\omega(\Delta+\tau a)
\]
for any $a$. Here $\Delta+\tau a$ is the shifted set
\[
\Delta+\tau a=\{\xi+\tau a \in\bbR: \xi\in\Delta\}.
\]

Finally, we define an ergodic sequence; this definition makes sense in the discrete case only, and here $\tau=1$. A family of real-valued sequences $\varkappa_\omega=\{\varkappa_\omega(n)\}_{n\in\bbZ}$, parameterised by $\omega\in\Omega$, is called an \emph{ergodic sequence}, if for any $n\in\bbZ$ the function $\omega\mapsto \varkappa_\omega(n)$ is measurable, and if 
\[
\varkappa_{T\omega}(n)=\varkappa_\omega(n+1), \quad n\in\bbZ,
\]
cf. \eqref{eq:b2-1} with $\tau=1$. 

\begin{example}\label{exa:c4}
Let $F$ be a measurable $1$-periodic function on $\bbR$.
In the context of Example~\ref{exa:c1} above, let 
\[
f_\omega(\xi)=F(\xi+\tau\omega), \quad \xi\in\bbR, \quad \omega\in\bbT.
\]
Then $\{f_\omega\}_{\omega\in\Omega}$ is an ergodic process (continuous case, period $\tau$). 
\end{example}

\begin{example}[Quasi-periodic functions]\label{exa:c5}
Let $F$ be a measurable function on the torus $\bbT^d$. In the context of Example~\ref{exa:c2} above, let 
\[
f_\omega(\xi)=F(\omega_1+\alpha_1 \xi, \dots,\omega_d+\alpha_d \xi),
\quad \xi\in\bbR, \quad \omega\in\bbT^d.
\]
Then $\{f_\omega\}_{\omega\in\bbT^d}$ is an ergodic process (continuous case, period $\tau=1$). 
\end{example}

\begin{example}\label{exa:c6}
In the context of Example~\ref{exa:c3} above, let $\{\varkappa_\omega(n)\}_{n\in\bbZ}$ be a collection of independent identically distributed random variables, with the distribution $\bbP_0$. Then $\{\varkappa_\omega(n)\}_{n\in\bbZ}$ is an ergodic sequence (discrete case, period $\tau=1$). 
\end{example}

\begin{example}
Let $v$ be a continuous compactly supported function on $\bbR$ and let $\{\varkappa_\omega(n)\}_{n\in\bbZ}$ be as in the previous example. For $\tau>0$, define 
\[
f_\omega(\xi)=\sum_{n=-\infty}^\infty \varkappa_\omega(n)v(\xi-\tau n), \quad \xi\in\bbR, \quad \omega\in\Omega=\bbR^{\bbZ}.
\]
Then $\{f_\omega\}_{\omega\in\bbR^{\bbZ}}$ is an ergodic process (discrete case with period $\tau$). 
\end{example}

\begin{remark}\label{rmk.bdd}
It is clear that for an ergodic process $f_\omega$, the $L^\infty$-norm 
\[
\text{ess\,sup}_{\xi\in\bbR}\abs{f_\omega(\xi)}
\] 
is invariant under the transformations $T_a$, and therefore is independent of $\omega$ almost surely. Thus, for an ergodic process a.s. boundedness is equivalent to uniform boundedness. The same remark pertains to ergodic sequences. 
\end{remark}

\subsection{Ergodic theorem}
We will need two versions of the ergodic theorem. The first one is the fundamental Birkhoff--Khinchin Ergodic theorem, see e.g. \cite[Theorem~1.2.1]{CSF}.

\begin{theorem}[Ergodic Theorem, version 1]
\label{thm:erg1}
Let $F$ be a measurable function on $\Omega$ with $\bE\{\abs{F(\omega)}\}$ finite. Then almost surely, 
\[
\lim_{M\to\infty}\frac1{2M}\int_{-M}^M F(T_a\omega)\dd a=\bE\{F(\omega)\}
\]
in the continuous case and 
\[
\lim_{N\to\infty}\frac{1}{2N+1}\sum_{n=-N}^N F(T^n\omega)=\bE\{F(\omega)\}
\]
in the discrete case. 
\end{theorem}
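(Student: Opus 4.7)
The strategy is to first establish the one-sided discrete version
\[
\lim_{N\to\infty}\frac{1}{N}\sum_{n=0}^{N-1}F(T^n\omega)=\bE\{F(\omega)\}\quad\text{almost surely,}
\]
and then deduce the two-sided discrete statement and the continuous statement from it. The core ingredient is the Yosida--Kakutani--Hopf maximal ergodic inequality: setting $S_N(\omega):=\sum_{n=0}^{N-1}F(T^n\omega)$ for $F\in L^1(\Omega,\bbP)$ and $E:=\{\omega:\sup_{N\geq 1}S_N(\omega)>0\}$, one has $\int_E F\,\dd\bbP\geq 0$. I would prove this by the standard ``sunrise'' argument: setting $M_K(\omega):=\max_{1\leq N\leq K}S_N(\omega)$, one verifies the pointwise inequality $F(\omega)\geq M_K(\omega)-\max(0,M_K(T\omega))$ on $\{M_K>0\}$; integrating over $\{M_K>0\}$ and using the $T$-invariance of $\bbP$ yields $\int_{\{M_K>0\}}F\,\dd\bbP\geq 0$, and monotone convergence as $K\to\infty$ gives the inequality.

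With the maximal inequality in hand, define
\[
\overline F(\omega):=\limsup_{N\to\infty}\frac{S_N(\omega)}{N},\qquad \underline F(\omega):=\liminf_{N\to\infty}\frac{S_N(\omega)}{N}.
\]
The identity $S_N(T\omega)=S_{N+1}(\omega)-F(\omega)$ shows that both $\overline F$ and $\underline F$ are $T$-invariant, hence by ergodicity each equals an a.s.\ constant. To identify these constants with $\bE F$, fix $\eps>0$ and apply the maximal inequality to $G:=F-\bE F-\eps$. On the $T$-invariant event $A:=\{\overline F>\bE F+\eps\}$ the partial sums $S_NG$ have positive supremum (since $\limsup_N S_NG(\omega)/N=\overline F(\omega)-\bE F-\eps>0$ there), so $A$ is contained in $E_G:=\{\sup_N S_NG>0\}$. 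If $\bbP(A)>0$, ergodicity gives $\bbP(A)=1$, hence $\bbP(E_G)=1$, and the maximal inequality yields $\int G\,\dd\bbP\geq 0$, contradicting $\int G\,\dd\bbP=-\eps$. Therefore $\bbP(A)=0$; letting $\eps\downarrow0$ gives $\overline F\leq\bE F$ a.s., and the symmetric argument applied to $-F$ yields $\underline F\geq\bE F$.

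The two-sided discrete statement then follows by splitting
\[
\sum_{n=-N}^N F(T^n\omega)=\sum_{n=0}^N F(T^n\omega)+\sum_{n=1}^N F((T^{-1})^n\omega)
\]
and applying the one-sided result to $T$ and to the (also ergodic and measure-preserving) automorphism $T^{-1}$. For the continuous case I would prove a continuous-parameter analogue of the maximal inequality, with sums $S_N$ replaced by integrals $\int_0^M F(T_a\omega)\,\dd a$; the sunrise argument and the subsequent deduction of a.s.\ convergence go through essentially verbatim, using only the $\bbP$-invariance of each $T_a$ and joint measurability in $(a,\omega)$. The two-sided version is then obtained by splitting $\int_{-M}^M=\int_0^M+\int_{-M}^0$ and applying the one-sided flow version to $T_{-a}$. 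The main obstacle throughout is the maximal ergodic inequality itself; once it is secured, the rest is a relatively mechanical chain of reductions.
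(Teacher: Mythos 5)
This is the classical Birkhoff--Khinchin ergodic theorem, which the paper does not prove at all: it is quoted as background with a citation to \cite[Theorem~1.2.1]{CSF}, so there is no ``paper proof'' to compare against. Your proposal is the standard textbook proof via the Yosida--Kakutani--Hopf maximal ergodic inequality, and the outline is correct: the sunrise inequality $F\geq M_K-\max(0,M_K\circ T)$ on $\{M_K>0\}$, integration and $T$-invariance, then the identification of the a.s.\ constant $\overline F$ by applying the maximal inequality to $F-\bE F-\eps$ on the invariant set $\{\overline F>\bE F+\eps\}$, and the reduction of two-sided averages to one-sided ones using that $T^{-1}$ is again measure-preserving and ergodic. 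Two small points of hygiene. First, the passage from $\int_{\{M_K>0\}}F\,\dd\bbP\geq0$ to $\int_E F\,\dd\bbP\geq0$ is by \emph{dominated} (not monotone) convergence, since $F$ changes sign; the domination is by $\abs{F}\in L^1$ and $\{M_K>0\}\uparrow E$. Second, in the continuous case you need the standing assumption that $(a,\omega)\mapsto F(T_a\omega)$ is jointly measurable (so that $\int_0^M F(T_a\omega)\,\dd a$ is defined and a.s.\ finite by Fubini); this is part of the definition of a measurable flow and should be stated. Your choice to prove a continuous-time maximal inequality directly is legitimate and in fact cleaner than the more common reduction to the time-one map $T_1$ via $G(\omega)=\int_0^1F(T_s\omega)\,\dd s$, because $T_1$ need not be ergodic even when the flow is, and that route requires an extra argument that the limit is invariant under the whole flow. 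With these caveats the proposal is a correct, self-contained replacement for the citation.
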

The second version 
is the direct consequence of the previous one applied to ergodic processes. 

\begin{theorem}[Ergodic Theorem, version 2]
\label{thm:erg2}
\begin{enumerate}[\rm (i)]
\item
Continuous case: 
Let $f_\omega$ be an ergodic process such that $\bE\{\abs{f_\omega(0)}\}<\infty$.
Then almost surely
\[
\lim_{M\to\infty}\frac1{2M}\int_{-M}^M f_\omega(x)\dd x=\bE\{f_\omega(0)\}.
\]
\item
Discrete case with period $\tau$: 
Let $f_\omega$ be an ergodic process such that
\[
\int_0^\tau \bE\{\abs{f_\omega(x)}\}\dd x<\infty.
\]
Then almost surely
\[
\lim_{M\to\infty}\frac1{2M}\int_{-M}^M f_\omega(x)\dd x=\frac1\tau\int_0^\tau \bE\{f_\omega(x)\}\dd x.
\]
\end{enumerate}
\end{theorem}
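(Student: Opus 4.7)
The plan is to deduce both statements directly from Theorem~\ref{thm:erg1} by choosing an appropriate sample function on $\Omega$.

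In the continuous case, I would set $F(\omega) := f_\omega(0)$, which is measurable with $\bE\{|F|\}<\infty$ by assumption. Evaluating the ergodicity relation $f_{T_a\omega}(\xi)=f_\omega(\xi+\tau a)$ at $\xi=0$ gives $f_\omega(\tau a)=F(T_a\omega)$ and hence $f_\omega(x)=F(T_{x/\tau}\omega)$ for every $x\in\bbR$. The change of variable $x=\tau a$ then yields
\[
\frac{1}{2M}\int_{-M}^M f_\omega(x)\,\dd x=\frac{1}{2(M/\tau)}\int_{-M/\tau}^{M/\tau} F(T_a\omega)\,\dd a,
\]
and Theorem~\ref{thm:erg1} applied as $M/\tau\to\infty$ delivers the almost sure limit $\bE\{F\}=\bE\{f_\omega(0)\}$.

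In the discrete case no continuous change of variable is available, so I would slice $[-M,M]$ into intervals of length $\tau$. Iterating $f_{T\omega}(\xi)=f_\omega(\xi+\tau)$ gives $f_\omega(y+n\tau)=f_{T^n\omega}(y)$, so for any positive integer $N$
\[
\int_{-N\tau}^{N\tau} f_\omega(x)\,\dd x=\sum_{n=-N}^{N-1} G(T^n\omega), \qquad G(\omega):=\int_0^\tau f_\omega(y)\,\dd y.
\]
By Tonelli the hypothesis $\int_0^\tau\bE\{|f_\omega(y)|\}\,\dd y<\infty$ implies $\bE\{|G|\}<\infty$, so Theorem~\ref{thm:erg1} applied to $G$ gives the claim along the subsequence $M=N\tau$. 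For general $M=N\tau+r$ with $0\leq r<\tau$, the boundary contribution is dominated by $\widetilde G(T^N\omega)+\widetilde G(T^{-N-1}\omega)$, where $\widetilde G(\omega):=\int_0^\tau |f_\omega(y)|\,\dd y\in L^1(\Omega)$; a standard corollary of Theorem~\ref{thm:erg1}, obtained by subtracting consecutive Ces\`aro averages of $\widetilde G(T^n\omega)$, gives $\widetilde G(T^N\omega)/N\to 0$ a.s., so this correction vanishes after division by $2M\geq 2N\tau$.

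The only mildly delicate step is the last $o(N)$ bound on the boundary integrals; it is a well-known consequence of the Birkhoff--Khinchin theorem, but it is the reason the discrete reduction is not entirely mechanical.
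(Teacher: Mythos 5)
Your proof is correct and follows exactly the route the paper intends: the paper gives no written proof, merely asserting that the theorem is a ``direct consequence'' of Theorem~\ref{thm:erg1} and citing \cite{Pa-Fi:92}, and your reduction (sample function $F(\omega)=f_\omega(0)$ with a change of variables in the continuous case; slicing into period cells and applying Birkhoff to $G(\omega)=\int_0^\tau f_\omega$ in the discrete case, with the standard $o(N)$ control of the boundary terms) is precisely that consequence spelled out. No gaps.
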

See \cite[Proposition~1.13]{Pa-Fi:92} for the continuous case and the discussion in \cite{Pa-Fi:92} at the end of Section 1.15(d), pages 28--29 for the discrete case.

\section{Ergodic Hankel operators}
\label{sec.b}

\subsection{Ergodic Hankel operators: definition}
Below $U_a$ is the shift operator \eqref{eq:b3} in $L^2(\bbR)$. 
Throughout the paper we consider only bounded operators. In what follows, we assume that we are in the framework of Section~\ref{sec:bb1}, i.e. we have a probability space $(\Omega,\calF,\bbP)$ and an ergodic group of transformations $\{T_a\}$ on $\Omega$, with $a\in\bbR$ (continuous case) or $a\in\bbZ$ (discrete case). 

\begin{definition}[Ergodic operators]\label{def:ergodic}
Let  $\bH_\omega$, $\omega\in\Omega$, be a measurable family of bounded operators in $L^2(\bbR)$. (Measurability means that for any $f,g\in L^2(\bbR_+)$, the inner product $\jap{\bH_\omega f,g}$ is a measurable function of $\omega\in\Omega$.) 
One says that $\bH_\omega$ is an \emph{ergodic operator} (with a period $\tau>0$) if 
\begin{equation}
\bH_{T_a\omega}U_{\tau a}=U_{\tau a}\bH_\omega 
\label{eq:b5}
\end{equation}
for all $a\in\bbR$ in the continuous case and for all $a\in\bbZ$ in the discrete case. See \cite[Section 1.D]{Pa-Fi:92}. 
\end{definition}
In the discrete case, when $T_a=T^a$, it suffices to require 
\[
\bH_{T\omega}U_{\tau}=U_{\tau}\bH_\omega. 
\]
\begin{remark}
Similarly to Remark~\ref{rmk.bdd} above, it is easy to see that if $\bH_\omega$ is an ergodic operator which is bounded almost surely, then the norm of $\bH_\omega$ is independent of $\omega$ (see Proposition~\ref{prp:b7} below) and therefore we have almost surely $\norm{\bH_\omega}= C$ with $C$ independent of $\omega$. Thus, for ergodic operators a.s. boundedness is equivalent to uniform boundedness. 
\end{remark}

We will be interested in \emph{bounded self-adjoint ergodic} Hankel operators $\bH_\omega$. For Hankel operators, it is easy to rephrase condition \eqref{eq:b5} of the above definition in terms of the corresponding kernel functions $h_\omega$. For clarity, we state this as a simple proposition. For simplicity, we only discuss the kernel functions satisfying the bound  \eqref{eq:a1}. 

\begin{proposition}
Let $\bH_\omega$ be an ergodic Hankel operator with the kernel function $h_\omega$ satisfying the inequality \eqref{eq:a1} almost surely with $C_h$ independent of $\omega$. Then 
\begin{equation}
h_\omega(t)=\frac{P_\omega(\log t)}{t}, \quad t>0,
\label{eq:x1}
\end{equation}
where $P_\omega$ is an ergodic process (see \eqref{eq:b2-tau}) satisfying the inequality \eqref{eq:a1a} almost surely. 
Conversely, for any ergodic process $P_\omega$ satisfying \eqref{eq:a1a} almost surely,
the kernel function \eqref{eq:x1} generates an ergodic Hankel operator with the kernel function $h_\omega$ satisfying \eqref{eq:a1} almost surely. 
\end{proposition}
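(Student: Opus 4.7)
The plan is to directly translate the operator identity \eqref{eq:b5} into a pointwise identity on the kernel function. Throughout, I will freely move between $\bH_\omega = \calE H_\omega \calE^*$ on $L^2(\bbR)$ and $H_\omega$ on $L^2(\bbR_+)$, using that $U_\tau = \calE D_\tau \calE^*$ so that \eqref{eq:b5} is equivalent to $H_{T_a\omega} D_{\tau a} = D_{\tau a} H_\omega$.

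For the forward direction, I would define $P_\omega(\xi) := \ee^\xi h_\omega(\ee^\xi)$ for $\xi \in \bbR$. Under the hypothesis \eqref{eq:a1}, this gives \eqref{eq:a1a} a.s., and with this definition \eqref{eq:x1} holds by construction. Measurability of $\omega \mapsto P_\omega(\xi)$ follows (after possibly modifying $P_\omega$ on a common null set in $\xi$) from the measurability of $\omega \mapsto \jap{\bH_\omega f, g}$ for all $f,g \in L^2(\bbR)$: indeed, a Hankel kernel $h_\omega$ is determined up to null sets by matrix elements of $H_\omega$ against, say, indicator functions of intervals, and integration against a suitable mollifier recovers the value $P_\omega(\xi)$ at Lebesgue points. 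The core calculation is then to use formula \eqref{eq:a2a}, which gives
\[
\bH_\omega(x,y) = \frac{P_\omega(\log(\ee^x+\ee^y))}{2\cosh\frac{y-x}{2}}.
\]
A direct computation of integral kernels shows that $\bH_{T_a\omega} U_{\tau a} = U_{\tau a} \bH_\omega$ is equivalent to $\bH_{T_a\omega}(x,y) = \bH_\omega(x+\tau a, y+\tau a)$ almost everywhere. Substituting the kernel formula above and cancelling the common factor $1/(2\cosh\frac{y-x}{2})$ yields
\[
P_{T_a\omega}(\log(\ee^x+\ee^y)) = P_\omega(\log(\ee^x+\ee^y) + \tau a)
\]
for a.e.\ $(x,y)\in\bbR^2$. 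Since the map $(x,y)\mapsto \log(\ee^x+\ee^y)$ is surjective onto $\bbR$ with a.e.\ positive Jacobian, this forces the pointwise ergodicity relation \eqref{eq:b2-tau} for $P_\omega$ after adjusting on a null set. This takes care of both the continuous case ($a\in\bbR$) and the discrete case ($a\in\bbZ$).

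For the converse, given an ergodic process $P_\omega$ satisfying \eqref{eq:a1a}, I would simply define $h_\omega$ by \eqref{eq:x1}; the bound \eqref{eq:a1} is immediate with $C_h = C_{h}$ from \eqref{eq:a1a}, and the Carleman estimate \eqref{eq:a7} guarantees the associated Hankel operator $H_\omega$ is a.s.\ bounded with norm $\leq \pi C_h$. Reversing the kernel computation above, the ergodicity of $P_\omega$ gives $\bH_{T_a\omega}(x,y) = \bH_\omega(x+\tau a, y+\tau a)$ for a.e.\ $(x,y)$, which is exactly \eqref{eq:b5}. Measurability of the operator family is inherited from the joint measurability of $(\omega,\xi)\mapsto P_\omega(\xi)$ by Fubini.

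The main obstacle, such as it is, is purely technical: the kernel function $h_\omega$ is a priori defined only a.e., and the operator identity \eqref{eq:b5} holds only up to null sets in the kernel representation. One therefore has to choose a jointly measurable representative of $(\omega,\xi)\mapsto P_\omega(\xi)$ so that \eqref{eq:b2-tau} is a genuine pointwise identity in $\xi$ for almost every $\omega$; this is standard in ergodic theory but deserves an explicit remark. Once this bookkeeping is done, the rest of the argument is a one-line kernel computation.
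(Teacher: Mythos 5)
Your proof is correct and follows exactly the route the paper intends: the paper states this proposition without proof, summarising it as the equivalence of $\bH_{T_a\omega}U_{\tau a}=U_{\tau a}\bH_\omega$ with $P_{T_a\omega}(\xi)=P_\omega(\xi+\tau a)$, which is precisely your kernel computation via \eqref{eq:a2a} and the identity $\bH_{T_a\omega}(x,y)=\bH_\omega(x+\tau a,y+\tau a)$. Your explicit attention to the null-set/representative issue for $P_\omega$ is a reasonable filling-in of bookkeeping the paper glosses over.
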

In brief, we can summarise this proposition as 
\[
\bH_{T_a\omega}U_{\tau a}=U_{\tau a}\bH_\omega
\quad\text{is equivalent to}\quad
P_{T_a\omega}(\xi)=P_\omega(\xi+\tau a).
\]

As a consequence of the discussion in Section~\ref{sec:aa3}, for \emph{positive} Hankel operators we can state the following version of the above proposition. 

\begin{proposition}\label{thm:b2}
Let $\bH_\omega$ be an ergodic Hankel operator which is positive and bounded almost surely. Then the corresponding kernel function $h_\omega$ can be represented as 
\[
h_\omega(t)=\int_{-\infty}^\infty \ee^{-t\ee^{-\xi}}\ee^{-\xi}\dd\Sigma_\omega(\xi)
\]
with a unique ergodic family of measures $\Sigma_\omega$. This family satisfies the uniform local boundedness condition \eqref{eq:a13} where $C_\Sigma$ is independent of $\omega$. 
The kernel function $h_\omega$ is $C^\infty$-smooth on $\bbR_+$ and satisfies the bound \eqref{eq:a1} almost surely with $C_h$ independent of $\omega$.
We have the representation \eqref{eq:a4} for the integral kernel of $\bH_\omega$:
\begin{equation}
\bH_\omega(x,y)=\int_{-\infty}^\infty \beta(x-\xi)\beta(y-\xi)\dd\Sigma_\omega(\xi), \quad x,y\in\bbR,
\label{eq:a4omega}
\end{equation}
where $\beta(\xi)=\ee^{-\ee^\xi}\ee^{\xi/2}$. 
Conversely, any ergodic family of measures $\Sigma_\omega$ satisfying \eqref{eq:a13} generates a positive bounded ergodic Hankel operator in this way. 
\end{proposition}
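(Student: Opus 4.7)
The plan is to reduce the statement to the non-random (pointwise in $\omega$) results already collected in Sections~\ref{sec:a2} and \ref{sec:aa3}, and then to promote the output to a jointly measurable, uniformly bounded, ergodic object using the ergodic identity \eqref{eq:b5} together with the preceding proposition.

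\emph{Step 1: Pointwise structure.} There is a full-measure set $\Omega'\subset\Omega$ on which $\bH_\omega$ is positive and bounded. For each $\omega\in\Omega'$, the theory recalled in Section~\ref{sec:a2} supplies a unique positive Borel measure $\sigma_\omega$ on $\bbR_+$ with $\sigma_\omega(\{0\})=0$ and the Carleson condition \eqref{eq:a9}, satisfying the Laplace representation \eqref{eq:a8}. Transporting $\sigma_\omega$ by $t=\ee^{-\xi}$ as in Section~\ref{sec:aa3} produces the measure $\Sigma_\omega$ on $\bbR$; the local boundedness \eqref{eq:a13}, the smoothness of $h_\omega$, the bound \eqref{eq:a1} with $C_h=C_\sigma$, and the integral kernel formula \eqref{eq:a4omega} are then immediate from the non-random theory.

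\emph{Step 2: Uniformity of the constants.} The identity \eqref{eq:b5} implies $\Norm{\bH_{T_a\omega}}=\Norm{\bH_\omega}$ for all $a$, so by ergodicity of $\{T_a\}$ the function $\omega\mapsto\Norm{\bH_\omega}$ is a.s.\ a single constant. Since Widom's theorem gives two-sided bounds between $\Norm{\bH_\omega}$ and the best Carleson constant $C_\sigma(\omega)$ for $\sigma_\omega$ (and, by the equivalence recalled in Section~\ref{sec:aa3}, also between $C_\sigma(\omega)$ and $C_\Sigma(\omega)$), the constants $C_h$, $C_\sigma$, $C_\Sigma$ may all be chosen independent of $\omega\in\Omega'$.

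\emph{Step 3: Ergodicity of $\Sigma_\omega$.} Applying the preceding proposition, the ergodicity of $\bH_\omega$ reads $P_{T_a\omega}(\xi)=P_\omega(\xi+\tau a)$, which after unwrapping \eqref{eq:x1} is equivalent to
\[
h_{T_a\omega}(t)=\ee^{\tau a}\,h_\omega(\ee^{\tau a}t),\qquad t>0.
\]
Plugging in the Laplace representation \eqref{eq:a8} and using uniqueness of the Laplace transform yields $\sigma_{T_a\omega}=\ee^{\tau a}\,\phi_{a,*}\sigma_\omega$ with $\phi_a(x)=\ee^{\tau a}x$; converting by $t=\ee^{-\xi}$ gives
\[
\Sigma_{T_a\omega}(\Delta)=\Sigma_\omega(\Delta+\tau a),
\]
which is the ergodicity condition for measures. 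Measurability of $\omega\mapsto\Sigma_\omega(\Delta)$ for every Borel $\Delta$ is deduced by a monotone-class argument: for each $t>0$ the map $\omega\mapsto h_\omega(t)=\int_{\bbR}\ee^{-t\ee^{-\xi}}\ee^{-\xi}\dd\Sigma_\omega(\xi)$ is measurable (since it is recovered from matrix elements of $\bH_\omega$ against fixed test functions through the representation \eqref{eq:a4omega} together with smoothness of $h_\omega$), so $\omega\mapsto\int g\,\dd\Sigma_\omega$ is measurable for all $g$ in the span of the exponentials $\ee^{-t\ee^{-\xi}}\ee^{-\xi}$; this span is dense in $C_0(\bbR)$ by a Stone--Weierstrass-type argument, and the uniform bound \eqref{eq:a13} then upgrades measurability to all bounded Borel functions and to all indicators.

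\emph{Step 4: Converse.} Given an ergodic family of measures $\Sigma_\omega$ satisfying \eqref{eq:a13} uniformly in $\omega$, define $\sigma_\omega$ via the inverse change of variable and $h_\omega$ by the Laplace integral in the statement. Section~\ref{sec:a2} then provides a positive bounded Hankel operator $H_\omega$, and \eqref{eq:a4omega} identifies the corresponding $\bH_\omega$; measurability of $\omega\mapsto\bH_\omega$ reduces to measurability of $\omega\mapsto\int g\,\dd\Sigma_\omega$ for fixed $g$, which follows from the hypothesis on $\Sigma_\omega$. Reversing the calculation of Step~3 verifies the ergodicity identity \eqref{eq:b5}.

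The main technical obstacle is the measurability claim in Step~3: $\Sigma_\omega$ is defined only implicitly through Laplace inversion, and one must check that this definition is jointly measurable before the ergodicity identity has any meaning as an identity between measurable functions of $\omega$. Everything else is either a direct quotation of the non-random theory of Sections~\ref{sec:a2}--\ref{sec:aa3} or a short bookkeeping computation exploiting \eqref{eq:b5} and the uniqueness of the Laplace representation.
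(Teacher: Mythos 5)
Your proposal is correct and follows exactly the route the paper intends: the paper gives no explicit proof of Proposition~\ref{thm:b2}, presenting it as ``a consequence of the discussion in Section~\ref{sec:aa3}'', and your Steps 1--4 (pointwise Laplace/Widom representation, uniformity of constants via invariance of $\norm{\bH_\omega}$, translation of \eqref{eq:b5} into $\sigma_{T_a\omega}=\ee^{\tau a}\phi_{a,*}\sigma_\omega$ by uniqueness of the Laplace transform, and the reverse computation for the converse) are precisely the bookkeeping the authors leave implicit. The only loose point is in your measurability argument: since $\Sigma_\omega$ is in general an \emph{infinite} measure (only locally uniformly bounded by \eqref{eq:a13}), uniform approximation in $C_0(\bbR)$ does not by itself justify passing to the limit in $\int g\,\dd\Sigma_\omega$, so you should either work in a weighted sup-norm compatible with the decay $\ee^{-\xi}$ of the generating exponentials or extract $\omega\mapsto\sigma_\omega([0,x])$ directly from a Laplace inversion formula applied to the measurable functions $\omega\mapsto h_\omega(t)$ --- a routine repair that does not affect the structure of the argument.
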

In brief, 
\begin{equation}
\bH_{T_a\omega}U_{\tau a}=U_{\tau a}\bH_\omega
\quad\text{is equivalent to}\quad
\Sigma_{T_a\omega}(\Delta)=\Sigma_\omega(\Delta+\tau a).
\label{eq:add1}
\end{equation}

\subsection{Examples}

\begin{example}[Periodic operators]
\label{ex:b4}
Let us discuss $\tau$-periodic Hankel operators, i.e. the Hankel operators ${H}$ satisfying the commutation relation \eqref{eq:a6}. This commutation relation is equivalent to 
\begin{equation}
\bH U_\tau =U_\tau \bH
\label{eq:periodic}
\end{equation}
for  $\bH=\calE{H} \calE^*$. As it is standard (see e.g. \cite[Section 1.15(g)]{Pa-Fi:92}), periodic Hankel operators fit into the ergodic framework as follows. Let $\Omega=\bbT$ be the unit circle and let $\{T_a\}_{a\in\bbR}$ be the group of rotations of $\bbT$ (see Example~\ref{exa:c1}). Given a $\tau$-periodic operator $\bH$ as in \eqref{eq:periodic}, we define
\begin{equation}
\bH_\omega=U_{\tau \omega} \bH U_{\tau \omega}^*, \quad \omega\in\bbT.
\label{eq:x3}
\end{equation}
Then $\bH_\omega$ is an ergodic Hankel operator in the sense of Definition~\ref{def:ergodic}. Note that in this simplest case all operators $\bH_\omega$ are unitarily equivalent to $\bH=\bH_0$. 

This definition can be equivalently rewritten in terms of the kernel functions as follows. Let $P$ be a bounded, measureable $\tau$-periodic function on $\bbR$. Define the family of kernel functions 
\begin{equation}
h_\omega(t)=\frac{P(\tau\omega+\log t)}{t}, \quad t>0, \quad \omega\in\bbT.
\label{eq:per2}
\end{equation}
Then the corresponding Hankel operator $\bH_\omega$ is an ergodic operator. 

We will discuss periodic Hankel operators in detail in Section~\ref{sec.d} below. 
\end{example}

\begin{example}[Quasiperiodic operators]
Let $\{P_\omega\}_{\omega\in\bbT^d}$ be an ergodic process corresponding to (bounded, real-valued) quasiperiodic functions as in Example~\ref{exa:c5}. Setting $h_\omega(t)=P_\omega(\log t)/t$ gives rise to a class of ergodic Hankel operators $H_\omega$ that is natural to call quasiperiodic. We are not going to consider this class of operators in this paper. We only mention that there is a large and active branch of spectral theory studying quasiperiodic and, more generally, almost-periodic Schr\"odinger operators, see e.g. \cite{Pa-Fi:92,Ji:22,Da-Fi:24} and references therein. 
\end{example}

\begin{example}[rKPH model]
\label{ex:b3}
Let $\bbP_0$ be a probability measure on $\bbR$ with a compact support on the positive half-line, and let $\{\varkappa_\omega(n)\}_{n\in\bbZ}$ be a sequence of i.i.d. random variables as in Example~\ref{exa:c6}. 
Define the measure $\Sigma_\omega$ as
\[
\Sigma_\omega=\sum_{n\in\bbZ}\varkappa_\omega(n) \delta_{\tau n}, 
\]
where $\delta_x$ is a point mass at $x$. Since the sequence $\{\varkappa_\omega(n)\}_{n\in\bbZ}$ is bounded, it is clear that $\Sigma_\omega$ satisfies the uniform local  boundedness condition \eqref{eq:a13}. Moreover, $\{\Sigma_\omega\}_{\omega\in\bbR^{\bbZ}}$ is an ergodic family of measures (discrete case with period $\tau$):
\[
\Sigma_{T\omega}(\Delta)=\Sigma_{\omega}(\Delta+\tau),
\]
where $T$ is the shift as in Example~\ref{exa:c3}. 
The corresponding ergodic Hankel operator $\bH_\omega$ in $L^2(\bbR)$ is the infinite sum of rank one operators: 
\[
\bH_\omega=\sum_{n\in\bbZ}\varkappa_\omega(n) \jap{\cdot,\psi_n}\psi_n, \quad 
\psi_n(\xi)=\beta(\xi-\tau n),
\]
where $\beta$ is as in \eqref{eq:a4} and $\norm{\psi_n}^2=1/2$. It can be viewed as an analogue of the alloy type model for the random Schr\"odinger operator, see formula (1.31) and the subsequent discussion in \cite{Pa-Fi:92}. 

In order to give this family of operators a name, and also to emphasize the analogies with the random Kronig--Penney model in the theory of Schr\"odinger operators (see e.g. \cite{Al-Co:12}), we will call the above family $\bH_\omega$ the \emph{random Kronig--Penney--Hankel model}, or \emph{rKPH} for short. We will discuss this model in Section~\ref{sec.g} below. 

It is sometimes convenient to allow $\varkappa_\omega(n)$ to be negative; then $\Sigma_\omega$ will be a signed measure (see Section~\ref{sec:d6} below).
\end{example}

\subsection{The spectrum and the kernels}

As a consequence of general theory of ergodic operators, we have the following properties.
\begin{proposition}\cite[Section 2]{Pa-Fi:92}\label{prp:b7}
Let $\bH_\omega$ be a bounded ergodic self-adjoint Hankel operator. Then the spectrum of $\bH_\omega$ and its continuous, absolutely continuous, singular continuous and pure point components are all non-random sets. The spectral multiplicity function of $\bH_\omega$ is also non-random. 
\end{proposition}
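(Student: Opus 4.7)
The plan is to follow the standard ergodic operator scheme, built around the covariance relation \eqref{eq:b5} and the ergodicity of $\{T_a\}$. The central observation is that \eqref{eq:b5} says $\bH_{T_a\omega}$ and $\bH_\omega$ are unitarily equivalent via $U_{\tau a}$. Consequently, every object that is invariant under unitary equivalence is $T_a$-invariant when attached to $\bH_\omega$, and by ergodicity such objects are almost surely constant.

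First I would pass the covariance through the functional calculus. For any Borel set $\Delta\subset\bbR$ and any bounded Borel function $\varphi$, \eqref{eq:b5} gives
\[
\varphi(\bH_{T_a\omega})=U_{\tau a}\varphi(\bH_\omega)U_{\tau a}^*,
\qquad
E_{T_a\omega}(\Delta)=U_{\tau a}E_\omega(\Delta)U_{\tau a}^*,
\]
where $E_\omega$ is the spectral projection of $\bH_\omega$. For the norm statement I would apply this with $\varphi(t)=t$: $\norm{\bH_{T_a\omega}}=\norm{\bH_\omega}$, which together with measurability of $\omega\mapsto\norm{\bH_\omega}$ (a supremum of countably many measurable functions $\abs{\jap{\bH_\omega f,g}}$) and ergodicity yields a.s. constancy of the norm.

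Next, for the spectrum, I would fix a countable basis $\{I_k\}$ of open intervals with rational endpoints and note that the event $A_k=\{\omega: E_\omega(I_k)\neq 0\}$ is $T_a$-invariant by the covariance relation (since unitary conjugation preserves non-triviality of projections), hence has probability $0$ or $1$ by ergodicity. Defining
\[
\sigma_\ast=\bbR\setminus\bigcup_{k:\bbP(A_k)=0}I_k,
\]
one checks that $\sigma(\bH_\omega)=\sigma_\ast$ almost surely. To extend this to the a.c., singular continuous and pure point parts I would replace $E_\omega(I_k)$ by the corresponding component projections $E_\omega^{ac}(I_k)$, $E_\omega^{sc}(I_k)$, $E_\omega^{pp}(I_k)$; these also satisfy the covariance relation because the Lebesgue decomposition of the spectral measure is intertwined by unitary equivalence, and their non-triviality is again a $T_a$-invariant measurable event.

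For the spectral multiplicity function the same idea applies but requires slightly more care. I would use that two self-adjoint operators are unitarily equivalent if and only if they share the same scalar spectral type and multiplicity function, and invoke a measurable selection of the maximal spectral type $\mu_\omega$ and of the multiplicity $m_\omega$. Covariance \eqref{eq:b5} gives that $\mu_{T_a\omega}$ is equivalent to $\mu_\omega$ and $m_{T_a\omega}=m_\omega$ almost everywhere with respect to $\mu_\omega$. Applying ergodicity to the $T_a$-invariant events $\{\omega: m_\omega\geq n \text{ on a set of positive $\mu_\omega$-measure in }I_k\}$ across the countable family $\{I_k\}$ and $n\in\bbN$ yields a deterministic multiplicity function. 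The principal technical point—and the step I expect to need the most care—is this measurability of the multiplicity function and a consistent a.e. choice across $\omega$, which is where one typically appeals to the machinery from \cite[Section~2]{Pa-Fi:92} rather than redoing the construction; the remaining steps are routine once the covariance on spectral projections is established.
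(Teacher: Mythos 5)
Your proposal is correct and is precisely the standard argument that the paper relies on by citing \cite[Section 2]{Pa-Fi:92}: the covariance relation \eqref{eq:b5} makes every unitarily invariant spectral attribute a $\{T_a\}$-invariant measurable function of $\omega$, hence a.s.\ constant by ergodicity, with the multiplicity function handled by measurable selection as in the cited reference. The paper offers no independent proof, so there is nothing further to compare.
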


The following statement is specific to Hankel operators and has no analogue in the ``Schr\"odinger theory''.

\begin{proposition}\label{prop:b8}
Let $\bH_\omega$ be a bounded ergodic self-adjoint Hankel operator. We have:
\begin{enumerate}[\rm (i)]
\item
either $\Ker\bH_\omega=\{0\}$ almost surely or $\dim\Ker\bH_\omega=\infty$ almost surely; 
\item
if $\bH_\omega$ is positive almost surely, then $\dim\Ker\bH_\omega=\infty$ if and only if  almost surely the measure $\Sigma_\omega$ (see \eqref{eq:a4})  is a pure point measure with the support satisfying 
\begin{equation}
\sum_{\xi\in\supp\Sigma_\omega}\sech\xi<\infty.
\label{eq:b9a}
\end{equation}
\end{enumerate}
In particular, the support of $\Sigma_\omega$ has no finite points of accumulation. 
\end{proposition}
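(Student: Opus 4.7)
My plan is to derive part (i) immediately from the Megretskii--Peller--Treil theorem (Theorem~\ref{thm.MPT}(i)) combined with ergodicity. That theorem forces $\dim\Ker\bH_\omega\in\{0,\infty\}$ for every $\omega$, and the intertwining relation $\bH_{T_a\omega}=U_{\tau a}\bH_\omega U_{\tau a}^*$ yields $\Ker\bH_{T_a\omega}=U_{\tau a}(\Ker\bH_\omega)$, so $\dim\Ker\bH_\omega$ is a $T_a$-invariant measurable function of $\omega$; ergodicity forces it to be a.s.\ constant, giving the stated dichotomy.

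For part (ii), I would work in the original $L^2(\bbR_+)$ picture using the unitary equivalence $\bH_\omega\simeq H_\omega$. With $\sigma_\omega$ the representing measure from \eqref{eq:a8}, one has the factorisation $H_\omega=B_\omega B_\omega^*$, where $(B_\omega\phi)(t)=\int_0^\infty\ee^{-tx}\phi(x)\,\dd\sigma_\omega(x)$, so that
\[
\Ker H_\omega=\{f\in L^2(\bbR_+):(\calL f)(x)=0\text{ for }\sigma_\omega\text{-a.e.\ }x\},
\]
where $(\calL f)(z)=\int_0^\infty\ee^{-tz}f(t)\,\dd t$ is the Laplace transform. By Paley--Wiener, $\calL f$ lies in the Hardy space $H^2$ of the right half-plane $\{\Re z>0\}$, and in particular is analytic there.

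Now I would split by the atomic structure of $\sigma_\omega$. If $\sigma_\omega$ has a nontrivial non-atomic component, its topological support $S$ is a perfect set whose accumulation points lie in $(0,\infty)$; continuity of $\calL f$ and the fact that $\calL f$ vanishes $\sigma_\omega$-a.e.\ force $\calL f$ to vanish on all of $S$, whence $\calL f\equiv 0$ by the identity theorem and $f=0$ by injectivity of $\calL$. So $\dim\Ker\bH_\omega=\infty$ compels $\sigma_\omega$, equivalently $\Sigma_\omega$, to be pure point. In the pure-point case, writing $\supp\sigma_\omega=\{x_n\}$ and $x_n=\ee^{-\xi_n}$, the existence of a nonzero $f\in L^2(\bbR_+)$ with $(\calL f)(x_n)=0$ for all $n$ is, by Blaschke's theorem for $H^2$ of the half-plane and its converse, equivalent to the Blaschke condition $\sum_n x_n/(1+x_n^2)<\infty$. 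The elementary identity $x_n/(1+x_n^2)=\tfrac12\sech\xi_n$ shows this is the same as \eqref{eq:b9a}; combining with part (i) upgrades ``$\Ker\neq\{0\}$'' to ``$\dim\Ker=\infty$'', finishing the equivalence. The ``in particular'' statement then follows by a one-line contradiction: a finite accumulation point $\xi^*$ of $\{\xi_n\}$ would give $\sech\xi_{n_k}\to\sech\xi^*>0$, contradicting convergence of $\sum_n\sech\xi_n$.

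The main technical obstacle is the appeal to Hardy-space machinery on the right half-plane. The ``only if'' side depends on Blaschke's convergence theorem applied to the Hardy function $\calL f$, while the ``if'' side requires producing an explicit nonzero Blaschke product in $H^2(\{\Re z>0\})$ with zeros exactly at $\{x_n\}$ and verifying that its inverse Laplace transform is an honest element of $\Ker H_\omega\subset L^2(\bbR_+)$.
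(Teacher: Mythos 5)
Your argument is correct, and part (i) coincides with the paper's proof: Theorem~\ref{thm.MPT}(i) forces $\dim\Ker\bH_\omega\in\{0,\infty\}$ pointwise, and invariance of this quantity under $\{T_a\}$ plus ergodicity gives the a.s.\ dichotomy. For part (ii) you take a genuinely different route. The paper simply cites \cite[Theorem~2.5]{PuTreil1} for the deterministic statement that a positive Hankel operator has infinite-dimensional kernel if and only if $\sigma$ is pure point with atoms satisfying the Blaschke condition \eqref{eq:b9}, and then translates \eqref{eq:b9} into \eqref{eq:b9a} via $x_n=\ee^{-\xi_n}$, exactly the identity $x/(1+x^2)=\tfrac12\sech\xi$ you use. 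You instead reconstruct the proof of that cited theorem: the quadratic form identity $\jap{H_\omega f,f}=\int_0^\infty\abs{(\calL f)(x)}^2\dd\sigma_\omega(x)$ reduces the kernel to $\{f:\calL f=0\ \sigma_\omega\text{-a.e.}\}$, Paley--Wiener places $\calL f$ in $H^2$ of the right half-plane, a nontrivial continuous part of $\sigma_\omega$ forces uncountably many zeros accumulating in $(0,\infty)$ and hence $f=0$, and in the purely atomic case the zero-set characterisation of $H^2$ of the half-plane gives exactly the Blaschke condition in both directions. This is sound and makes the proposition self-contained (note the paper's own Lemma~\ref{lma:add2} uses the same Laplace-transform uniqueness idea, and the remark following the proposition explicitly identifies \eqref{eq:b9a} as the Blaschke condition in disguise), at the cost that your ``if'' direction --- constructing the Blaschke product $B$ for the half-plane, multiplying by a nonzero $H^2$ function, and pulling back through Paley--Wiener to a nonzero element of $\Ker H_\omega$ --- is only sketched; that construction is precisely the content of the result the paper cites. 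Two minor points: your reduction from ``$\Ker\neq\{0\}$'' to ``$\dim\Ker=\infty$'' follows already from Theorem~\ref{thm.MPT}(i) applied to the single operator, so no appeal to part (i) is needed there; and to match the a.s.\ phrasing of the statement one should note (as the paper's remark does) that by ergodicity the properties ``$\Sigma_\omega$ is pure point'' and ``\eqref{eq:b9a} holds'' each have probability $0$ or $1$, so your $\omega$-wise equivalence upgrades to the stated almost-sure equivalence.
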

\begin{remark}
For an ergodic family of measures $\Sigma_\omega$, consider the set of $\omega$ such that $\Sigma_\omega$ is pure point. By the ergodicity condition \eqref{eq:add1}, this set is invariant under all $T_a$. Thus, it has measure $0$ or $1$. It follows that either $\Sigma_\omega$ is pure point a.s. or it is not pure point a.s. Similarly, one checks that either \eqref{eq:b9a} is satisfied a.s. or not satisfied a.s. We note also that \eqref{eq:b9a}
is a condition on the support of $\Sigma_\omega$ only, i.e. it does not involve the weights (masses) of atoms of $\Sigma_\omega$. 
\end{remark}

\begin{proof}[Proof of Proposition~\ref{prop:b8}]
Part (i):
By Theorem~\ref{thm.MPT}(i), the function 
\[
\omega\mapsto\dim\Ker\bH_\omega
\]
takes values $0$ or $\infty$.  This function is invariant under the family $\{T_a\}$ of ergodic transformations of $\Omega$; hence it is a.s. constant.

Part (ii): 
it was proved in \cite[Theorem~2.5]{PuTreil1} that if ${H}$ is a positive Hankel operator with the measure $\sigma$ (see \eqref{eq:a8}), then $\dim\Ker{H}=\infty$ if and only if $\sigma$ is a pure point measure, supported on a sequence of points $\{a_n\}$ satisfying 
\begin{equation}
\sum_{n}\frac{a_n}{1+a_n^2}<\infty.
\label{eq:b9}
\end{equation}
Rewriting \eqref{eq:b9} in terms of $\Sigma_\omega$, we arrive at condition \eqref{eq:b9a}. 
\end{proof}
\begin{remark}
Condition \eqref{eq:b9} is the Blaschke condition for the zeros of a function in the Hardy class of the right half-plane. Thus, condition \eqref{eq:b9a} is the Blaschke condition in disguise. 
\end{remark}

\section{The IDS measure: definition}
\label{sec.c}

\subsection{Preliminaries}
Let $\bH_\omega$ be a bounded self-adjoint ergodic Hankel operator in $L^2(\bbR)$. We would like to define the integrated density of states (IDS) measure $\nu$ of $\bH_\omega$ as follows. For a Borel set $\Delta\subset\bbR$ separated away from the origin, let $\chi_\Delta(\bH_\omega)$ be the spectral projection of $\bH_\omega$ corresponding to $\Delta$, with the integral kernel $\chi_\Delta(\bH_\omega)(x,y)$, $x,y\in\bbR$. We would like to define
\begin{equation}
\nu(\Delta)=\bE\bigl\{\chi_\Delta(\bH_\omega)(0,0)\bigr\}
\label{eq:c38}
\end{equation}
in the continuous case, and 
\begin{equation}
\nu(\Delta)=\bE\left\{\frac1\tau\int_0^\tau\chi_\Delta(\bH_\omega)(x,x)\dd x\right\}
\label{eq:c39}
\end{equation}
in the discrete case with period $\tau$. In order for this definition to make sense, we need to ensure that the restriction of the integral kernel $\chi_\Delta(\bH_\omega)(x,y)$ onto a point $(0,0)$ or onto the diagonal $(x,x)$ is well-defined. Thus, we start with a simple technical lemma, and after that we will return to definitions \eqref{eq:c38} and \eqref{eq:c39}.

\subsection{Integral kernels}
\begin{lemma}\label{lma:c1}
Let $\bH$ be a self-adjoint Hankel operator in $L^2(\bbR)$ with the kernel function $h$ satisfying the bound \eqref{eq:a1} with some $C_h>0$. Let $\varphi$ be a Borel function on $\bbR$ satisfying the estimate 
\[
\abs{\varphi(\lambda)}\leq A\abs{\lambda}^2 \quad\text{ for }\quad \abs{\lambda}\leq \pi C_h
\] 
with some $A>0$. Then the operator $\varphi(\bH)$ in $L^2(\bbR)$ (see \eqref{eq:KE}) has the integral kernel $\Phi(x,y)$ which is jointly continuous in $x,y$ and bounded, with
\[
\abs{\Phi(x,y)}\leq AC_h^2.
\]
\end{lemma}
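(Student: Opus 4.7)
The plan is to exploit the quadratic bound on $\varphi$ to factor $\varphi(\bH)$ through $\bH$ on both sides, writing $\varphi(\bH)=\bH\psi(\bH)\bH$ for a bounded auxiliary function $\psi$. The integral kernel of the composition can then be expressed as an $L^2$-inner product between the ``rows'' of $\bH$, which are uniformly $L^2$-bounded thanks to the pointwise estimate on $h$.

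Set $\psi(\lambda)=\varphi(\lambda)/\lambda^2$ for $\lambda\neq 0$ and $\psi(0)=0$. By \eqref{eq:a7} combined with \eqref{eq:a1}, $\norm{\bH}\leq\pi C_h$, so the spectrum of $\bH$ lies in $[-\pi C_h,\pi C_h]$, where $\abs{\psi(\lambda)}\leq A$. Hence $\psi(\bH)$ is bounded with $\norm{\psi(\bH)}\leq A$; since $\lambda^2\psi(\lambda)=\varphi(\lambda)$ holds on the whole spectrum (the case $\lambda=0$ being handled by $\varphi(0)=0$, which follows from the hypothesis), the bounded Borel functional calculus yields the operator identity $\varphi(\bH)=\bH\psi(\bH)\bH$.

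For each $x\in\bbR$ let $k_x(y)=\bH(x,y)=\ee^{(x+y)/2}h(\ee^x+\ee^y)$. From \eqref{eq:a1} one obtains $\abs{k_x(y)}\leq C_h/(2\cosh((y-x)/2))$, and the elementary identity $\int\sech^2(u/2)\dd u=4$ yields $\norm{k_x}_{L^2(\bbR)}\leq C_h$ uniformly in $x$. Since $\bH$ is self-adjoint with real symmetric kernel, $(\bH g)(x)=\jap{g,k_x}$ for every $g\in L^2(\bbR)$, and for $f\in L^1\cap L^2(\bbR)$ one may represent $\bH f=\int_\bbR k_y f(y)\dd y$ as a Bochner integral in $L^2(\bbR)$. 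Substituting into $\bH\psi(\bH)\bH f$ and interchanging, the integral kernel of $\varphi(\bH)$ is identified as
\[
\Phi(x,y)=\jap{\psi(\bH)k_y,k_x},
\]
and Cauchy--Schwarz gives the stated bound $\abs{\Phi(x,y)}\leq\norm{\psi(\bH)}\norm{k_y}\norm{k_x}\leq AC_h^2$.

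For joint continuity, the estimate
\[
\abs{\Phi(x,y)-\Phi(x',y')}\leq\norm{\psi(\bH)}\bigl(\norm{k_x}\norm{k_y-k_{y'}}+\norm{k_{y'}}\norm{k_x-k_{x'}}\bigr)
\]
reduces matters to continuity of $x\mapsto k_x\in L^2(\bbR)$. Using the dominant $C_h/(2\cosh((\abs{y-x_0}-1)/2))\in L^2(\bbR_y)$, valid whenever $\abs{x'-x_0}\leq 1$ and $\abs{y-x_0}\geq 1$, dominated convergence reduces this further to pointwise a.e.\ convergence $k_{x'}(y)\to k_{x_0}(y)$ as $x'\to x_0$. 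This is where I expect the \emph{main obstacle} to sit: the convergence is immediate when $h$ is continuous on $\bbR_+$ (as for all concrete examples later in the paper, e.g.\ for positive Hankel operators via \eqref{eq:a8}), but for merely measurable $h$ satisfying only \eqref{eq:a1} one must approximate $h$ by continuous $h_n$ with $\abs{h_n(t)}\leq C_h/t$ and $h_n\to h$ almost everywhere, deduce strong convergence of the associated Hankel operators, and use the uniform bound $\abs{\Phi_n(x,y)}\leq AC_h^2$ to pass to a continuous limit $\Phi$. The remaining steps are purely functional-calculus and Cauchy--Schwarz manipulations.
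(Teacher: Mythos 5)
Your proof is correct and follows essentially the same route as the paper: factor $\varphi(\bH)$ through $\bH^2$ using the quadratic bound (the paper writes $\varphi(\bH)=X\bH^2$ with $\norm{X}\leq A$; your symmetric $\bH\psi(\bH)\bH$ is the same operator), identify $\Phi(x,y)$ as an inner product of the uniformly $L^2$-bounded rows $k_x$, apply Cauchy--Schwarz, and reduce continuity to continuity of $x\mapsto k_x$ in $L^2(\bbR)$. You have also correctly located the only delicate point -- the case of merely measurable $h$ -- which the paper likewise dispatches with an approximation argument whose details it omits.

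One caution about the remedy you sketch for that last step: approximating $h$ by continuous $h_n$ and then trying to pass to the limit in $\Phi_n$ via ``strong convergence of the associated Hankel operators plus the uniform bound $\abs{\Phi_n}\leq AC_h^2$'' is problematic, because for a general bounded \emph{Borel} $\varphi$ strong convergence $\bH_n\to\bH$ does not give convergence of $\varphi(\bH_n)$ to $\varphi(\bH)$, and a uniform pointwise bound on continuous functions does not make their pointwise limit continuous. The fix is to keep the fixed bounded operator $\psi(\bH)$ untouched and do the approximation only where it is needed, namely in proving $\norm{k_{x'}-k_{x_0}}_{L^2}\to 0$: approximate $P$ (equivalently $h$) by continuous functions in $L^2_{\mathrm{loc}}$, use the exponential decay of the $\sech$ weight to make this approximation uniform for $x'$ near $x_0$, and conclude by a three-epsilon argument. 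This is exactly the argument the paper alludes to.
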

\begin{proof}
We write $\varphi(\lambda)=\widetilde\varphi(\lambda)\lambda^2$, where $\abs{\widetilde\varphi(\lambda)}\leq A$ for $\abs{\lambda}\leq \pi C_h$. Since $\norm{\bH}\leq \pi C_h$ (see \eqref{eq:a1}), this implies that 
\begin{equation}
\varphi(\bH)=X\bH^2, \quad \text{ with } \norm{X}\leq A. 
\label{eq:c22}
\end{equation}
Let us write $h(t)=P(\log t)/t$, with $\abs{P(\xi)}\leq C_h$. 
We recall that the integral kernel of $\bH$ is given by \eqref{eq:a2a}. Let us denote this kernel by 
\[
F_x(y):=\frac{P(\log(e^x+e^y))}{2\cosh\frac{x-y}{2}}, \quad x,y\in\bbR.
\]
Clearly, $F_x(y)=F_y(x)$. Since $P$ is bounded with $\abs{P}\leq C_h$, we see that $F_x\in L^2(\bbR)$ with a norm bound uniform in $x$: 
\begin{align*}
\int_{-\infty}^\infty \abs{F_x(y)}^2\dd y
&\leq
C_h^2
\int_{-\infty}^\infty \frac1{(2\cosh\frac{x-y}{2})^2}\dd y
=C_h^2.
\end{align*}
From here and \eqref{eq:c22} we see that the integral kernel $\Phi$ of $\varphi(\bH)$ can be written as
\[
\Phi(x,y)=\jap{XF_y,F_x}_{L^2(\bbR)}, \quad x,y\in\bbR. 
\]
In particular, this kernel is bounded:
\[
\abs{\Phi(x,y)}\leq \norm{X}\norm{F_x}\norm{F_y}\leq AC_h^2.
\]
It remains to prove that $\Phi(x,y)$ is continuous in $x$ and $y$. Since $X$ is bounded, it suffices to prove that $F_x$ is continuous in $x$ as an element of $L^2(\bbR)$. 

In order to simplify our notation, let us explain how to show that $\norm{F_x-F_0}_{L^2}\to0$ as $x\to0$. Let $x_n\to0$ as $n\to\infty$. We need to show that 
\[
\int_{-\infty}^\infty 
\Abs{\frac{P(\log(e^{x_n}+e^y))}{2\cosh\frac{y-x_n}{2}}-\frac{P(\log(1+e^y))}{2\cosh\frac{y}{2}}}^2\dd y\to0
\]
as $n\to\infty$. If $P$ is continuous, the statement immediately follows by dominated convergence. If $P$ is not continuous, this can be shown by a simple approximation argument using the boundedness of $P$ and the exponential decay of the factor $1/\cosh(y/2)$. We omit the details of this argument. 
\end{proof} 

\subsection{Definition of the IDS measure}
Now we are ready to define the IDS measure. 
\begin{definition}
Let $\bH_\omega$ be a bounded self-adjoint ergodic Hankel operator on $L^2(\bbR)$ (discrete or continuous case), satisfying \eqref{eq:a1} with $C_h$ independent of $\omega$. For a Borel set $\Delta\subset\bbR$ separated away from the origin, we define $\nu(\Delta)$ by \eqref{eq:c38} in the continuous case and \eqref{eq:c39} in the discrete case with period $\tau$. The measure $\nu$ is called the \emph{Integrated Density of States (IDS) measure} for $\bH_\omega$. 
\end{definition}

\begin{remark}
\begin{enumerate}[1.]
\item
Since the set $\Delta$ is assumed to be separated from the origin, the value $\nu(\{0\})$ remains undetermined by this definition. For definiteness, we set $\nu(\{0\})=0$.  
\item
The above definition ensures that the measure $\nu$ is sigma-finite on $\bbR\setminus\{0\}$, i.e. $\nu(\Delta)$ is finite for any bounded interval $\Delta$ separated from the origin. However, $\nu(\bbR)$ is not necessarily finite. We will come back to this discussion in Section~\ref{sec.cc}. 
\end{enumerate}
\end{remark}

As it is standard, we have 

\begin{proposition}\cite[Theorem~3.1]{Pa-Fi:92}\label{thm:c4}
The support of $\nu$ coincides with the deterministic spectrum of $\bH_\omega$. 
\end{proposition}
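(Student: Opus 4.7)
The plan is to show, for every open bounded interval $\Delta$ with $\overline\Delta\subset\bbR\setminus\{0\}$, the equivalence that $\nu(\Delta)>0$ if and only if $\Delta$ meets the deterministic spectrum of $\bH_\omega$ (non-random by Proposition~\ref{prp:b7}); since such intervals form a basis, this yields the proposition. Write $P_\omega:=\chi_\Delta(\bH_\omega)$ and let $K_\omega$ be its integral kernel. Since $\Delta$ is bounded away from zero with $d:=\dist(\Delta,\{0\})>0$, the estimate $\abs{\chi_\Delta(\lambda)}\leq d^{-2}\lambda^2$ lets Lemma~\ref{lma:c1} apply, making $K_\omega$ jointly continuous and uniformly bounded in $\omega$. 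Positivity $P_\omega\geq 0$ gives $K_\omega(x,x)\geq 0$, and the ergodicity relation \eqref{eq:b5} translates into the covariance $K_{T_a\omega}(x,y)=K_\omega(x-\tau a,y-\tau a)$. One direction is immediate: if $\Delta$ is disjoint from the deterministic spectrum then $P_\omega=0$ a.s., so $K_\omega\equiv 0$ a.s.\ and \eqref{eq:c38}--\eqref{eq:c39} give $\nu(\Delta)=0$.

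For the converse I argue the contrapositive and assume $\nu(\Delta)=0$. In the continuous case, \eqref{eq:c38} and $K_\omega(0,0)\geq 0$ force $K_\omega(0,0)=0$ almost surely; since $T_a$ is measure-preserving, the covariance yields $K_\omega(-\tau a,-\tau a)=K_{T_a\omega}(0,0)=0$ almost surely for each fixed $a\in\bbR$, and Fubini applied to the jointly measurable function $(\omega,a)\mapsto K_\omega(-\tau a,-\tau a)$, followed by continuity of $K_\omega$ along the diagonal, promotes this to $K_\omega(x,x)=0$ for every $x\in\bbR$ almost surely. In the discrete case, \eqref{eq:c39} instead forces $\int_0^\tau K_\omega(x,x)\dd x=0$ a.s., whence nonnegativity and continuity give $K_\omega(x,x)=0$ on $[0,\tau]$, and the $T^n$-covariance extends this to all of $\bbR$. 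To upgrade diagonal vanishing to $K_\omega\equiv 0$, I would use the approximate identity $\phi_{n,x}:=n\chi_{[x-\frac{1}{2n},x+\frac{1}{2n}]}$: by the projection identity $P_\omega^2=P_\omega=P_\omega^*$,
\[
\norm{P_\omega\phi_{n,x}}_{L^2}^2=\jap{P_\omega\phi_{n,x},\phi_{n,x}}\to K_\omega(x,x)=0,
\]
while pointwise $(P_\omega\phi_{n,x})(y)\to K_\omega(y,x)$ for every $y$; matching these two limits forces $K_\omega(y,x)=0$ for every $y$, and varying $x$ yields $K_\omega\equiv 0$. Hence $P_\omega=0$ almost surely, so $\Delta$ is disjoint from the deterministic spectrum.

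The main obstacle I anticipate is exactly this final step, upgrading diagonal vanishing to vanishing of the entire kernel: it relies crucially on the projection identity $P_\omega^2=P_\omega$ together with the joint continuity provided by Lemma~\ref{lma:c1}, since $P_\omega$ need not be Hilbert--Schmidt. The remaining ingredients---ergodic covariance, Fubini, and the non-randomness of the spectrum---are standard.
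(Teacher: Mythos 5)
Your argument is correct, and it supplies in full the standard proof that the paper only cites (the text of Section~5 simply refers to \cite[Theorem~3.1]{Pa-Fi:92} and asserts that the same argument applies verbatim); your route is essentially that argument. The two directions are handled as in Pastur--Figotin: disjointness from the spectrum kills the projection and hence $\nu(\Delta)$, while for the converse you use nonnegativity of the diagonal kernel, the covariance $K_{T_a\omega}(x,y)=K_\omega(x+\tau a,y+\tau a)$ (note the sign: with the paper's convention $(U_\tau f)(\xi)=f(\xi+\tau)$ the shift in the kernel is $+\tau a$, not $-\tau a$, though this is immaterial since $a$ ranges over all of $\bbR$), Fubini and continuity to get $K_\omega(x,x)\equiv0$, and then the projection identity to upgrade diagonal vanishing to $P_\omega=0$. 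Your approximate-identity device $\norm{P_\omega\phi_{n,x}}^2=\jap{P_\omega\phi_{n,x},\phi_{n,x}}\to K_\omega(x,x)$ is a clean, self-contained substitute for the more common step ``$\Tr(\chi_K P_\omega\chi_K)=\int_K K_\omega(x,x)\,\dd x=0$ for all compact $K$ implies $P_\omega=0$'', and it has the advantage of not requiring any trace-class discussion. One remark worth making explicit: your restriction to intervals with closure in $\bbR\setminus\{0\}$ is not just a convenience but is forced by the paper's convention $\nu(\{0\})=0$; at $\lambda=0$ the proposition as literally stated can fail (in the flat-band example of Section~7.3 the spectrum is $\{-E_*,0,E_*\}$ while $\supp\nu=\{-E_*,E_*\}$), so the correct reading --- and the one your proof establishes --- is that $\supp\nu$ and the deterministic spectrum coincide on $\bbR\setminus\{0\}$.
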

\begin{proof}
Formally Theorem~3.1 of \cite{Pa-Fi:92} refers to ergodic Schr\"odinger operators, although the same arguments, word for word, apply to $\bH_\omega$. 
\end{proof}
Thus, the IDS measure determines the location and the geometry of the spectrum of $\bH_\omega$.

\subsection{The IDS measure as a weak limit of measures}
As above, let $\Delta$ be a Borel set separated away from the origin. Since $\chi_\Delta(\bH_\omega)$ is an ergodic operator (see the general Theorem I.2.7 in \cite{Pa-Fi:92}), its integral kernel on the diagonal $\chi_\Delta(\bH_\omega)(x,x)$ is an ergodic process. Thus, by the Ergodic Theorem (Theorem~\ref{thm:erg2}), we have, almost surely, 
\begin{equation}
\nu(\Delta)=\lim_{M\to\infty}\frac1{2M}\int_{-M}^M \chi_\Delta(\bH_\omega)(x,x)\dd x.
\label{eq:c25}
\end{equation}
In general, the set of $\omega$ where \eqref{eq:c25} holds depends on $\Delta$. However, if $\Delta$ is an interval with rational endpoints, by a countability argument the set of $\omega$ of full measure can be chosen uniformly over $\Delta$. 

We rephrase \eqref{eq:c25} as follows:
\begin{equation}
\nu(\Delta)=\lim_{M\to\infty}\frac1{2M}\Tr(\chi_{M}\chi_\Delta(\bH_\omega)\chi_{M}),
\label{eq:c26}
\end{equation}
where 
\begin{equation}
\chi_M:=\chi_{(-M,M)}
\label{eq:cm}
\end{equation}
is the operator of multiplication by the indicator function of the interval $(-M,M)$ in $L^2(\bbR)$. The trace exists by an argument similar to that of Lemma~\ref{lma:c1}. 

Integrating \eqref{eq:c26} with a continuous weight, we can also rephrase it in terms of the weak convergence of measures. We state this as a proposition for future reference. 

\begin{proposition}\label{thm:c5}
Let $\bH_\omega$ be a self-adjoint ergodic Hankel operator satisfying \eqref{eq:a1} with an $\omega$-independent $C_h$, and let $\nu$ be the IDS measure of $\bH_\omega$. Then almost surely, 
\begin{equation}
\lim_{M\to\infty}\frac1{2M}\Tr(\chi_{M}\varphi(\bH_\omega)\chi_{M})
=
\int_{-\infty}^\infty \varphi(\lambda)\dd\nu(\lambda)
\label{eq:c28}
\end{equation}
for any continuous function $\varphi$ compactly supported on $\bbR\setminus\{0\}$.
\end{proposition}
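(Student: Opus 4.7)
The plan is to start from equation~\eqref{eq:c26}, which already provides \eqref{eq:c28} for indicator functions of intervals with rational endpoints separated from the origin, and to extend this to continuous $\varphi$ compactly supported in $\bbR\setminus\{0\}$ by uniform approximation. The essential technical ingredient is a trace bound uniform in $M$, which allows interchanging the $M$-limit with the approximation limit.

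Since $\varphi$ is continuous with support in $\bbR\setminus\{0\}$, I fix $\delta\in(0,\pi C_h]$ and $R>0$ such that $\supp\varphi\subset I:=[-R,-\delta]\cup[\delta,R]$, and choose a sequence of simple functions $\varphi_n=\sum_{i=1}^{k_n}c_{n,i}\chi_{\Delta_{n,i}}$ with each $\Delta_{n,i}$ an interval with rational endpoints inside $I$, satisfying $\norm{\varphi-\varphi_n}_\infty\to0$. Applying \eqref{eq:c26} to each $\Delta_{n,i}$ and using linearity gives, almost surely,
\[
\lim_{M\to\infty}\frac1{2M}\Tr\bigl(\chi_M\varphi_n(\bH_\omega)\chi_M\bigr)
=\sum_{i=1}^{k_n}c_{n,i}\nu(\Delta_{n,i})
=\int_{-\infty}^\infty\varphi_n(\lambda)\,\dd\nu(\lambda),
\]
where since only countably many $\varphi_n$ are involved, a single exceptional null set works for all $n$.

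The key step is the $M$-independent estimate
\[
\Abs{\frac1{2M}\Tr\bigl(\chi_M[\varphi-\varphi_n](\bH_\omega)\chi_M\bigr)}
\leq\frac{C_h^2}{\delta^2}\norm{\varphi-\varphi_n}_\infty.
\]
To prove it, set $\psi_n(\lambda):=(\varphi-\varphi_n)(\lambda)/\lambda^2$; since $\varphi-\varphi_n$ vanishes on $\{\abs\lambda<\delta\}$, one has $\norm{\psi_n}_\infty\leq \norm{\varphi-\varphi_n}_\infty/\delta^2$. Functional calculus gives $(\varphi-\varphi_n)(\bH_\omega)=\bH_\omega\,\psi_n(\bH_\omega)\,\bH_\omega$, and hence
\[
\chi_M[\varphi-\varphi_n](\bH_\omega)\chi_M=(\chi_M\bH_\omega)\,\psi_n(\bH_\omega)\,(\bH_\omega\chi_M).
\]
The bound $\int\abs{\bH_\omega(x,y)}^2\dd y\leq C_h^2$ from the proof of Lemma~\ref{lma:c1} yields $\norm{\chi_M\bH_\omega}_{\mathrm{HS}}^2\leq 2MC_h^2$ and similarly for $\bH_\omega\chi_M$, so the above product is trace class with trace norm at most $2MC_h^2\norm{\psi_n}_\infty$. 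Combining with the trivial bound $\abs{\int(\varphi-\varphi_n)\,\dd\nu}\leq\norm{\varphi-\varphi_n}_\infty\,\nu(I)$, where $\nu(I)<\infty$ by local finiteness of $\nu$, a standard $\varepsilon/3$ argument then allows me to interchange the $M$- and $n$-limits and conclude \eqref{eq:c28}.

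The main obstacle is the uniform-in-$M$ trace bound: each factor $\norm{\chi_M\bH_\omega}_{\mathrm{HS}}^2$ grows linearly in $M$, and the factorization $(\varphi-\varphi_n)(\bH_\omega)=\bH_\omega\psi_n(\bH_\omega)\bH_\omega$ is precisely what distributes this growth across two Hilbert--Schmidt factors, so that the $1/2M$ normalisation absorbs it. The vanishing of $\varphi$ near the origin is essential here, since it is what makes $\psi_n$ bounded.
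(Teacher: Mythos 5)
Your proof is correct and follows essentially the route the paper intends: the paper states the proposition as a direct consequence of "integrating \eqref{eq:c26} with a continuous weight" and gives no further details, while you carry out exactly that approximation argument. The one genuinely nontrivial ingredient you supply — the uniform-in-$M$ error bound via the factorisation $(\varphi-\varphi_n)(\bH_\omega)=\bH_\omega\psi_n(\bH_\omega)\bH_\omega$ and the Hilbert--Schmidt estimate $\norm{\chi_M\bH_\omega}_2^2\leq 2MC_h^2$ — is the right way to justify the interchange of limits, and it matches the trace estimates the paper uses elsewhere (e.g.\ in the proof of Theorem~\ref{thm:moments}).
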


\subsection{Example: the Carleman operator}
As an important basic example, we compute the IDS measure of the Carleman operator (see Example~\ref{exa:Carl}), which we denote by ${H}_C$ here. Of course, there is no dependence on the parameter $\omega$ here; formally, we can regard ${H}_C$ as an  ergodic operator with $\Omega$ a singleton.
By \eqref{eq:a2a}, the operator $\bH_C=\calE{H}_C\calE^*$ is the convolution with the function $\tfrac12\sech\tfrac{x}{2}$. The IDS measure of convolution operators is easy to compute: it is just the pushforward of the Lebesgue measure on $\bbR$ by the symbol, see e.g. \cite[formula (3.13)]{Pa-Fi:92}. By the elementary formula 
\[
\frac12
\int_{-\infty}^\infty \ee^{-\ii x\xi}\sech\tfrac{x}{2}\, \dd x=\pi\sech\pi\xi, \quad \xi\in\bbR,
\]
we see that the symbol of $\bH_C$ is the function $\pi\sech\pi\xi$. It follows that the corresponding IDS measure $\nu_C$ can be computed as
\begin{align}
\int_\lambda^\infty \dd\nu_C(x)
&=\int_{-\infty}^\infty \chi_{(\lambda,\infty)}(\pi\sech\pi\xi)\dd\xi
=\frac{2}{\pi}\arcsech(\lambda/\pi)
\notag
\\
&=\frac{2}{\pi}\log\biggl(\frac{\pi}{\lambda}+\sqrt{\frac{\pi^2}{\lambda^2}-1}\biggr)
\label{eq:cc4}
\end{align}
for $0<\lambda<\pi$.
Thus, the IDS measure $\nu_C$ is supported on the interval $[0,\pi]$ and is absolutely continuous with the density 
\[
\dd\nu_C(\lambda)=\frac{\chi_{(0,\pi)}(\lambda)\dd\lambda}{\lambda\sqrt{\pi^2-\lambda^2}}.
\]
We emphasize the non-integrable singularity of this density as $\lambda\to0_+$. In other words, the measure $\nu_C$ is infinite. Moreover, it follows from \eqref{eq:cc4} that \begin{equation}
\nu_C((\lambda,\infty))=\frac{2}{\pi}\log\frac1\lambda + O(1), \quad \lambda\to0_+.
\label{eq:cc5}
\end{equation}

\begin{remark}
Widom in \cite{Widom66} computed the analogue of the integrated density of states for a class of Hankel matrices \eqref{eq:a0}, including Hilbert's matrix. See also subsequent work \cite{Fedele} on a wider class of Hankel matrices. 
\end{remark}

\subsection{Finiteness of moments of $\nu$}\label{sec:c6}
\begin{theorem}\label{thm:moments}
Let $\bH_\omega$ be a self-adjoint ergodic Hankel operator satisfying \eqref{eq:a1} with an $\omega$-independent $C_h$ and let $\nu$ be the IDS measure of $\bH_\omega$. Then the second moment of $\nu$ is finite. If $\bH_\omega$ is positive, then the first moment of $\nu$ is finite. 
\end{theorem}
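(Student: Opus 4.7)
The plan is to bound both moments by exploiting explicit bounds on the diagonal of the integral kernels of $\bH_\omega^2$ (for the second moment) and of $\bH_\omega$ itself (for the first moment, in the positive case), combined with a monotone approximation of the test functions $\lambda^2$ and $\lambda$ by continuous functions $\varphi_n$ compactly supported away from $0$, where the defining formulas \eqref{eq:c38}--\eqref{eq:c39} for $\nu$ apply directly.

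For the second moment, I would first apply Lemma~\ref{lma:c1} with $\varphi(\lambda)=\lambda^2$ and $A=1$ to get that $\bH_\omega^2$ has a jointly continuous integral kernel uniformly bounded by $C_h^2$ (equivalently, $\bH_\omega^2(x,x)=\int|\bH_\omega(x,y)|^2\,\dd y$ together with the explicit bound $|\bH_\omega(x,y)|\le C_h/(2\cosh\tfrac{y-x}{2})$ from \eqref{eq:a2a} yields the same constant). Choose continuous $\varphi_n$ with $0\le\varphi_n(\lambda)\le\lambda^2$, compactly supported in $\bbR\setminus\{0\}$, and $\varphi_n\uparrow\lambda^2$ pointwise. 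Since $\bH_\omega^2-\varphi_n(\bH_\omega)$ is a positive operator whose kernel is continuous by Lemma~\ref{lma:c1}, its diagonal is non-negative, hence $\varphi_n(\bH_\omega)(0,0)\le\bH_\omega^2(0,0)\le C_h^2$. Extending the identity $\int\varphi_n\,\dd\nu=\bE\{\varphi_n(\bH_\omega)(0,0)\}$ from indicator functions of sets separated from the origin (covered directly by the definition) to continuous $\varphi_n$ supported away from $0$ (by uniform approximation, using Lemma~\ref{lma:c1} to control the kernel of the difference), and adapting the averaging in the discrete case, monotone convergence then yields $\int\lambda^2\,\dd\nu\le C_h^2<\infty$.

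For the first moment in the positive case, Lemma~\ref{lma:c1} cannot be invoked for $\varphi(\lambda)=\lambda$, so I would instead use the explicit form $\bH_\omega(x,x)=P_\omega(x+\log 2)/2$ from \eqref{eq:a2a}, which is bounded by $C_h/2$. Choose continuous $\varphi_n$ with $0\le\varphi_n(\lambda)\le\lambda$, compactly supported in $(0,\infty)$, and $\varphi_n\uparrow\lambda\chi_{(0,\infty)}$ pointwise. Since $\bH_\omega\ge0$ implies $\chi_M\varphi_n(\bH_\omega)\chi_M\le\chi_M\bH_\omega\chi_M$, taking traces gives
\[
\Tr(\chi_M\varphi_n(\bH_\omega)\chi_M)\le\Tr(\chi_M\bH_\omega\chi_M)=\int_{-M}^{M}\bH_\omega(x,x)\,\dd x.
\]
Dividing by $2M$ and letting $M\to\infty$, Proposition~\ref{thm:c5} turns the left-hand side into $\int\varphi_n\,\dd\nu$ almost surely, while Theorem~\ref{thm:erg2} applied to the ergodic process $x\mapsto\bH_\omega(x,x)$ turns the right-hand side into $\bE\{\bH_\omega(0,0)\}\le C_h/2$ (with the corresponding averaged bound in the discrete case). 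Monotone convergence, together with $\nu(\{0\})=0$ and the positivity of $\bH_\omega$, then yields $\int\lambda\,\dd\nu\le C_h/2<\infty$.

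The main obstacle is precisely this first-moment case: since $\lambda$ fails the $O(\lambda^2)$ decay at the origin required by Lemma~\ref{lma:c1}, one cannot identify $\int\lambda\,\dd\nu$ with $\bE\{\bH_\omega(0,0)\}$ via the continuous-kernel route that works for the second moment, and must detour through the trace-class comparison and the ergodic theorem as above. This is also why positivity is indispensable here: it is what makes both the operator inequality $\varphi_n(\bH_\omega)\le\bH_\omega$ and the pointwise sign control on the diagonal of $\bH_\omega$ usable in a one-sided sense.
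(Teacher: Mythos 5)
Your proof is correct and follows essentially the same route as the paper's: both moments are reduced to the uniform diagonal bounds $\bH_\omega^2(x,x)\le C_h^2$ and $\bH_\omega(x,x)\le C_h/2$ coming from \eqref{eq:a2a}, with positivity used exactly as you describe to obtain the one-sided operator inequality needed for the first moment. The only (immaterial) difference is that for the second moment the paper bounds the normalised trace $\frac1{2M}\Tr(\chi_M\varphi(\bH_\omega)\chi_M)$ via $0\le\varphi(\bH_\omega)\le C\bH_\omega^2$ and Proposition~\ref{thm:c5}, rather than working pointwise with the kernel at $(0,0)$ directly through the definition \eqref{eq:c38}.
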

\begin{proof}
If $\varphi$ is any function vanishing in the neighbourhood of the origin and satisfying $0\leq\varphi(\lambda)\leq C\lambda^2$, we have, almost surely, 
\begin{align*}
\int_{-\infty}^\infty \varphi(\lambda)\dd\nu(\lambda)
=
\lim_{M\to\infty}\frac1{2M}\Tr(\chi_M\varphi(\bH_\omega)\chi_M)
\leq
C\limsup_{M\to\infty}\frac1{2M}\Tr(\chi_M\bH_\omega^2\chi_M), 
\end{align*}
hence it suffices to check that $\limsup$ in the right-hand side is finite. Using \eqref{eq:a2a}, we find 
\begin{align*}
\Tr(\chi_M\bH_\omega^2\chi_M)
&\leq
\int_{-M}^M \int_{-\infty}^\infty \frac{P_\omega(\log(\ee^x+\ee^y)^2}{(2\cosh\frac{x-y}{2})^2}\dd y\, \dd x
\\
&\leq
\int_{-M}^M \int_{-\infty}^\infty \frac{C_h^2}{(2\cosh\frac{x-y}{2})^2}\dd y\, \dd x
=2C_h^2 M, 
\end{align*}
and so the $\limsup$ is finite. 

Now suppose $\bH_\omega$ is positive and let us prove that the first moment of $\nu$ is finite. By an argument similar to the first part of the proof, the question reduces to the finiteness of 
\[
\limsup_{M\to\infty}\frac1{2M}\Tr(\chi_M\bH_\omega\chi_M).
\]
Since the kernel function $h_\omega$ is smooth and satisfies the estimate \eqref{eq:a1}, we can estimate the trace directly as follows: 
\begin{align*}
\Tr(\chi_M\bH_\omega\chi_M)
&=
\frac12\int_{-M}^M   P_\omega(\log(2\ee^x)) \dd x
\leq 
C_h M,
\end{align*}
which completes the proof. 
\end{proof}
This theorem is probably very far from being optimal. For example, for the Carleman operator all positive fractional moments of $\nu$ are finite.

\section{Szeg\H{o} type theorems}
\label{sec:szego}

\subsection{Preamble} 
In order to motivate what comes next, we recall that in the mathematical physics context, one has two equivalent approaches to defining the IDS measure of the Schr\"odinger operator \eqref{eq:Schrod}:
\begin{enumerate}[\rm (i)]
\item
through evaluating the normalised trace of the spectral projection of the Schr\"odinger operator, exactly as in \eqref{eq:c25} (see also \eqref{eq:c38} and \eqref{eq:c39});
\item
 through restricting the Schr\"odinger operator  onto the interval $(-M,M)$, with some boundary conditions, and identifying the IDS measure with the limit of the normalised eigenvalue counting measure of the restricted operator as $M\to\infty$. 
 \end{enumerate}
The equivalence of (i) and (ii) can be viewed as an analogue of the First Szeg\H{o} Limit theorem in this context. 

In this section we present two versions of Szeg\H{o} type theorems for Hankel operators. We have two theorems because there are two natural ways to restrict Hankel operators: 
\begin{enumerate}[\rm (a)]
\item
by restricting the integral kernel of $\bH_\omega$ to $(-M,M)$;
\item
for positive Hankel operators, by restricting the integration in \eqref{eq:a4omega} to $(-M,M)$.
\end{enumerate}
Approach (a) is the most direct analogue of the Szeg\H{o} type theorem for the Schr\"odinger operator referred to above. Approach (b) proves to be more useful in the context of positive Hankel operators. 
In terms of the pseudodifferential viewpoint \eqref{eq:a5} on positive Hankel operators, (a) and (b) correspond to restricting to $(-M,M)$  in either the ``coordinate'' or the ``momentum'' variable. 

\subsection{Approach (a)}

Here the restricted operator can be identified with $\chi_{M}\bH_\omega\chi_{M}$. 

\begin{theorem}\label{thm:c2a}
Let $\bH_\omega$ be a self-adjoint ergodic Hankel operator satisfying \eqref{eq:a1} with an $\omega$-independent $C_h$ and let $\nu$ be the IDS measure of $\bH_\omega$. Then almost surely, 
\begin{equation}
\lim_{M\to\infty}\frac1{2M}\Tr\varphi(\chi_{M}\bH_\omega\chi_{M})
=
\int_{-\infty}^\infty \varphi(\lambda)\dd\nu(\lambda)
\label{eq:c42}
\end{equation}
for any continuous function $\varphi$ compactly supported on $\bbR\setminus\{0\}$.
\end{theorem}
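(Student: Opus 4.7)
The plan is to reduce to Proposition~\ref{thm:c5}, which already establishes the analogous limit with $\chi_M\varphi(\bH_\omega)\chi_M$ in place of $\varphi(\chi_M\bH_\omega\chi_M)$. It therefore suffices to prove that, almost surely,
\[
\frac{1}{2M}\Bigl|\Tr\varphi(\chi_M\bH_\omega\chi_M)-\Tr[\chi_M\varphi(\bH_\omega)\chi_M]\Bigr|\to 0
\]
as $M\to\infty$. Both $\bH_\omega$ and $\chi_M\bH_\omega\chi_M$ have operator norm at most $\pi C_h$, so their functional calculi are compared on the common interval $[-\pi C_h,\pi C_h]$.

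For the polynomial approximation step, I would use that $\varphi$ vanishes in a neighbourhood of $0$ to write $\varphi(\lambda)=\lambda^2\widetilde\varphi(\lambda)$ with $\widetilde\varphi$ continuous on $[-\pi C_h,\pi C_h]$, and then uniformly approximate $\widetilde\varphi$ by polynomials $q_n$. Setting $p_n(\lambda)=\lambda^2 q_n(\lambda)$, the error on each side is controlled via the elementary estimate
\[
\bigl|\Tr B(\widetilde\varphi-q_n)(B)B\bigr|\leq \norm{\widetilde\varphi-q_n}_\infty\Tr[B^2]
\]
applied with $B=\chi_M\bH_\omega\chi_M$ and with $B=\bH_\omega$ sandwiched by $\chi_M$; direct integration of the kernel bound $|\bH_\omega(x,y)|\leq C_h/(2\cosh\tfrac{x-y}{2})$ gives $\Tr[(\chi_M\bH_\omega\chi_M)^2]=O(M)$ and $\Tr[\chi_M\bH_\omega^2\chi_M]=O(M)$. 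It therefore remains to prove the claim for monomials $p(\lambda)=\lambda^k$ with $k\geq 2$.

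For monomials, using $\chi_M^2=\chi_M$ and cyclicity of the trace, the discrepancy reduces to $\Tr[(\bH_\omega\chi_M)^k-\bH_\omega^k\chi_M]$, and an elementary telescoping identity gives
\[
\bH_\omega^k\chi_M-(\bH_\omega\chi_M)^k=\sum_{j=0}^{k-2}(\bH_\omega\chi_M)^j\bH_\omega(I-\chi_M)\bH_\omega^{k-1-j}\chi_M.
\]
Using $(I-\chi_M)\bH_\omega^{k-1-j}\chi_M=-[\chi_M,\bH_\omega^{k-1-j}]\chi_M$ together with cyclicity and the bound $|\Tr(XY)|\leq\norm{X}_{\mathrm{HS}}\norm{Y}_{\mathrm{HS}}$, each summand is estimated by
\[
\bigl\|[\chi_M,\bH_\omega^{k-1-j}]\bigr\|_{\mathrm{HS}}\cdot\bigl\|\chi_M(\bH_\omega\chi_M)^j\bH_\omega\bigr\|_{\mathrm{HS}}.
\]
The second factor is $O(\sqrt M)$ by the same direct kernel integration. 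For the first, the commutator product rule $[\chi_M,\bH_\omega^r]=\sum_{i=0}^{r-1}\bH_\omega^i[\chi_M,\bH_\omega]\bH_\omega^{r-1-i}$ reduces matters to the single-commutator estimate $\norm{[\chi_M,\bH_\omega]}_{\mathrm{HS}}=O(1)$. Dividing by $2M$ then yields $O(M^{-1/2})\to 0$.

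The hard part is this last commutator estimate, which quantifies that only a bounded ``boundary layer'' near $\pm M$ contributes rather than an $O(M)$-sized region. It follows from the off-diagonal exponential decay of the Hankel kernel via a direct computation,
\[
\norm{[\chi_M,\bH_\omega]}_{\mathrm{HS}}^2=2\iint_{|x|<M<|y|}|\bH_\omega(x,y)|^2\,\dd x\,\dd y\leq C
\]
uniformly in $M$; once this is in place, the rest of the argument is a routine manipulation of traces and commutators.
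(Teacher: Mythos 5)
Your proposal is correct, and it is worth noting that it supplies a complete argument where the paper deliberately gives none: the authors decline to prove Theorem~\ref{thm:c2a}, pointing instead (Remark~\ref{rmk:A2-4}) to relation \eqref{eq:A2-7}, which handles only the positive case. Their route to \eqref{eq:A2-7} is the Laptev--Safarov trace inequality \eqref{eq:A2-10}, which bounds $\abs{\Tr(\chi_M\varphi(\bH_\omega)\chi_M)-\Tr\varphi(\chi_M\bH_\omega\chi_M)}$ by $\tfrac12\norm{\varphi''}_\infty\norm{\chi_M\bH_\omega\chi_M^\perp}_2^2$ for $C^2$ functions, with the Hilbert--Schmidt factor controlled via the factorisation $\bH_\omega=\bL_\omega^*\bL_\omega$ in Lemma~\ref{lma:A2-2}. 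You instead factor out $\lambda^2$, run a Weierstrass approximation with the a priori bounds $\Tr[(\chi_M\bH_\omega\chi_M)^2]=O(M)$ and $\Tr[\chi_M\bH_\omega^2\chi_M]=O(M)$, and reduce to monomials, where the telescoping identity and the commutator product rule bring everything down to $\norm{[\chi_M,\bH_\omega]}_2=O(1)$ --- which is essentially the same quantity as $\norm{\chi_M\bH_\omega\chi_M^\perp}_2$, only you obtain it by direct integration of the off-diagonal decay $\abs{\bH_\omega(x,y)}\le C_h/(2\cosh\tfrac{x-y}{2})$ rather than through the positive factorisation. What your approach buys is self-containedness (no appeal to \cite{La-Sa} or to operator-Lipschitz machinery) and validity for general self-adjoint, not necessarily positive, ergodic Hankel operators satisfying \eqref{eq:a1} --- exactly the generality of the statement; the cost is a weaker per-step bound ($O(\sqrt M)$ per monomial versus the $O(1)$ of \eqref{eq:A2-7}), which is harmless after division by $2M$. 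All the individual steps check out: the identity $\Tr(\chi_M\bH_\omega\chi_M)^k=\Tr(\bH_\omega\chi_M)^k$, the telescoping sum (the $j=k-1$ term vanishes since $(I-\chi_M)\chi_M=0$), the bound $\norm{\chi_M(\bH_\omega\chi_M)^j\bH_\omega}_2=\norm{(\chi_M\bH_\omega)^{j+1}}_2=O(\sqrt M)$, and the uniform $\eps/3$ argument combining the polynomial approximation with Proposition~\ref{thm:c5}.
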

We note that the trace in the left-hand site of \eqref{eq:c42} exists by an argument similar to Lemma~\ref{lma:c1}. 
We will not prove this theorem (see, however, Remark~\ref{rmk:A2-4}) firstly because the proof is very similar to that of Theorem~\ref{thm:c2} below and secondly because various versions of this theorem are available in the literature, see e.g. Sections~4.B and 5.B of \cite{Pa-Fi:92}.  

This theorem has the following simple corollary. 
\begin{theorem}\label{thm:c2a2}
Let $\bH_\omega^{(1)}$ and $\bH_\omega^{(2)}$ be two ergodic Hankel operators as in Theorem~\ref{thm:c2a} such that 
\begin{equation}
0\leq \bH_\omega^{(1)}\leq \bH_\omega^{(2)}
\label{eq:c42a}
\end{equation}
almost surely. Let $\nu^{(1)}$ (resp. $\nu^{(2)}$) be the IDS measure of $\bH_\omega^{(1)}$ (resp. $\bH_\omega^{(2)}$). Then 
\begin{equation}
\nu^{(1)}((\lambda,\infty))\leq \nu^{(2)}((\lambda,\infty))
\label{eq:c42b}
\end{equation}
for all $\lambda>0$. 
\end{theorem}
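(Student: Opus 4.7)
The plan is to transfer the operator inequality $\bH_\omega^{(1)}\leq\bH_\omega^{(2)}$ into an inequality for the eigenvalue counting functions of the truncated operators $\chi_M\bH_\omega^{(i)}\chi_M$ via the min-max principle, and then pass to the limit $M\to\infty$ using the Szeg\H{o} type identity \eqref{eq:c42} of Theorem~\ref{thm:c2a}. Throughout I fix $\lambda>0$.

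First, sandwiching by the orthogonal projection $\chi_M$ preserves the operator order: $0\leq\chi_M\bH_\omega^{(1)}\chi_M\leq\chi_M\bH_\omega^{(2)}\chi_M$. Moreover, the identity $\chi_M\bH_\omega\chi_M\bH_\omega\chi_M=\chi_M\bH_\omega^{2}\chi_M-\chi_M\bH_\omega(I-\chi_M)\bH_\omega\chi_M$ together with positivity of the subtracted term gives $(\chi_M\bH_\omega\chi_M)^{2}\leq\chi_M\bH_\omega^{2}\chi_M$, whose trace is bounded by $2C_h^{2}M$ according to the calculation in the proof of Theorem~\ref{thm:moments}. Hence both truncations are Hilbert--Schmidt, and their positive eigenvalues $\mu_n^{(i)}(M)$, listed in decreasing order, form an $\ell^{2}$ sequence. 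The min-max principle now yields $\mu_n^{(1)}(M)\leq\mu_n^{(2)}(M)$ for every $n$ and $M$, whence
\begin{equation*}
N^{(1)}(\lambda,M):=\#\{n:\mu_n^{(1)}(M)>\lambda\}\leq N^{(2)}(\lambda,M).
\end{equation*}

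To bring $\nu^{(i)}$ into the picture I approximate $\chi_{(\lambda,\infty)}$ by continuous functions $\varphi_-^{\epsilon}\leq\chi_{(\lambda,\infty)}\leq\varphi_+^{\epsilon}$ compactly supported on $\bbR\setminus\{0\}$, with $\varphi_\pm^{\epsilon}$ equal to $1$ on $[\lambda+\epsilon,\pi C_h+1]$, $\varphi_+^{\epsilon}$ vanishing on $(-\infty,\lambda-\epsilon]$ and $\varphi_-^{\epsilon}$ vanishing on $(-\infty,\lambda+\epsilon]$. Expanding the trace as a sum over eigenvalues and using $\varphi_-^{\epsilon}(\mu)\leq\chi_{(\lambda,\infty)}(\mu)\leq\varphi_+^{\epsilon}(\mu)$ for each $\mu\geq 0$ yields the sandwich
\begin{equation*}
\tfrac{1}{2M}\Tr\varphi_-^{\epsilon}(\chi_M\bH_\omega^{(1)}\chi_M)\leq\tfrac{N^{(1)}(\lambda,M)}{2M}\leq\tfrac{N^{(2)}(\lambda,M)}{2M}\leq\tfrac{1}{2M}\Tr\varphi_+^{\epsilon}(\chi_M\bH_\omega^{(2)}\chi_M).
\end{equation*}
Passing to $M\to\infty$ on a full-measure event supplied by Theorem~\ref{thm:c2a} (taken common to both operators and all rational $\epsilon$), and then letting $\epsilon\to 0$ by monotone convergence (the tails of $\nu^{(i)}$ away from $0$ are finite by Theorem~\ref{thm:moments} and the boundedness of the spectrum), one obtains $\nu^{(1)}((\lambda,\infty))\leq\nu^{(2)}([\lambda,\infty))$.

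The single delicate point — and in my view the main obstacle to a one-line proof — is the discrepancy between the open and closed intervals on the two sides: the upper approximation naturally delivers $\nu^{(2)}$ of the \emph{closed} ray $[\lambda,\infty)$. To remove it, I apply the bound just proved at an arbitrary $\lambda'>\lambda$, which gives $\nu^{(1)}((\lambda',\infty))\leq\nu^{(2)}([\lambda',\infty))\leq\nu^{(2)}((\lambda,\infty))$, and then let $\lambda'\searrow\lambda$: the sets $(\lambda',\infty)$ increase to $(\lambda,\infty)$, so by continuity of measure from below the left-hand side converges to $\nu^{(1)}((\lambda,\infty))$, yielding \eqref{eq:c42b}.
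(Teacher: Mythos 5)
Your proof is correct and follows essentially the same route as the paper: truncate to $\chi_M\bH_\omega^{(i)}\chi_M$, compare the eigenvalue counting functions via the operator ordering and min--max, pass to $M\to\infty$ using Theorem~\ref{thm:c2a}, and resolve the endpoint issue by a limiting argument in $\lambda$ (the paper instead establishes the limit of the normalised counting function at the non-atoms of $\nu$ and extends by density, which is the same idea in different bookkeeping). The only blemish is the description of $\varphi_+^{\epsilon}$: as literally stated (equal to $1$ only on $[\lambda+\epsilon,\pi C_h+1]$) it need not dominate $\chi_{(\lambda,\infty)}$ on $(\lambda,\lambda+\epsilon)$, but a continuous majorant equal to $1$ on $[\lambda,\pi C_h+1]$ and vanishing on $(-\infty,\lambda-\epsilon]$ does the job and the rest of your argument is unaffected.
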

\begin{proof}
We first note that in the context of Theorem~\ref{thm:c2a}, by a standard argument, approximating a step function by continuous functions, one obtains 
\begin{equation}
\lim_{M\to\infty}\frac1{2M}\Tr\chi_{(\lambda,\infty)}(\chi_{M}\bH_\omega\chi_{M})
=
\nu((\lambda,\infty))
\label{eq:x2}
\end{equation}
at any point $\lambda>0$ such that $\lambda$ is not a point mass of $\nu$. In particular, \eqref{eq:x2} holds true at a dense countable set of $\lambda>0$.

Next, we find from \eqref{eq:c42a}
\[
0\leq \chi_M \bH_\omega^{(1)}\chi_M\leq \chi_M \bH_\omega^{(2)}\chi_M,
\]
and therefore the counting functions of eigenvalues of these two operators satisfy
\[
\Tr\chi_{(\lambda,\infty)}(\chi_M \bH_\omega^{(1)}\chi_M)
\leq
\Tr\chi_{(\lambda,\infty)}(\chi_M \bH_\omega^{(2)}\chi_M)
\]
for any $\lambda>0$. Dividing by $2M$, taking $M\to\infty$ and using \eqref{eq:x2}, we obtain \eqref{eq:c42b} at a dense countable set of $\lambda>0$. A limiting argument yields \eqref{eq:c42b} for all $\lambda>0$.  
\end{proof}

\subsection{Approach (b)}

Let $\bH_\omega$ be a positive ergodic Hankel operator in $L^2(\bbR)$  with an ergodic family of measures $\Sigma_\omega$ as in Proposition~\ref{thm:b2}. Starting from the representation \eqref{eq:a4omega} for the integral kernel of $\bH_\omega$, we define the family of operators $\bH_\omega^{(M)}$ in $L^2(\bbR)$ with the integral kernels 
\begin{equation}
\bH_\omega^{(M)}(x,y)=\int_{-M}^M \beta(x-\xi)\beta(y-\xi)\dd\Sigma_\omega(\xi),
\label{eq:c1}
\end{equation}
where $M>0$ and $x,y\in\bbR$. Using the uniform local  boundedness condition \eqref{eq:a13}, it is straightforward to see that  $\bH_\omega^{(M)}$ is of trace class for any $M<\infty$. 

\begin{theorem}\label{thm:c2}
Let $\bH_\omega$ be a positive bounded ergodic Hankel operator and let $\nu$ be the corresponding IDS measure (see \eqref{eq:c38}, \eqref{eq:c39}). Then almost surely, 
\begin{align}
\lim_{M\to\infty}\frac1{2M}\Tr\varphi(\bH_\omega^{(M)})
&=\int_{-\infty}^\infty \varphi(\lambda)\dd\nu(\lambda)
\label{eq:c3}
\end{align}
for any continuous function $\varphi$ compactly supported on $\bbR\setminus\{0\}$.
\end{theorem}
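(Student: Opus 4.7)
My approach is to reduce Theorem~\ref{thm:c2} to Theorem~\ref{thm:c2a} by proving that almost surely
$$\lim_{M\to\infty}\frac{1}{2M}\bigl|\Tr\varphi(\bH_\omega^{(M)})-\Tr\varphi(\chi_{M}\bH_\omega\chi_{M})\bigr|=0$$
for every continuous $\varphi$ compactly supported on $\bbR\setminus\{0\}$. Since $\bH_\omega\geq\bH_\omega^{(M)}\geq 0$, both operators have spectra in $[0,\pi C_h]$; writing $\varphi(\lambda)=\lambda^{2}\widetilde\varphi(\lambda)$ with $\widetilde\varphi\in C([0,\pi C_h])$, approximating $\widetilde\varphi$ uniformly by polynomials, and using the a priori bounds $\Tr(\bH_\omega^{(M)})^{2}\leq CM$ and $\Tr(\chi_{M}\bH_\omega\chi_{M})^{2}\leq CM$ (both proved as in the proof of Theorem~\ref{thm:moments} via \eqref{eq:a13} and the decay of $\beta$) reduces the task to the case $\varphi(\lambda)=\lambda^{k}$ with $k\geq 2$.

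\textbf{Trace identities.} Set $\gamma(u)=\int_{\bbR}\beta(y)\beta(y+u)\,dy=1/(2\cosh(u/2))$ and $\gamma_{M}(u,v)=\int_{-M}^{M}\beta(x-u)\beta(x-v)\,dx\leq\gamma(u-v)$. Direct expansion using \eqref{eq:a4omega} together with cyclicity of the trace gives
$$\Tr\bigl(\bH_\omega^{(M)}\bigr)^{k}=\int_{[-M,M]^{k}}\prod_{i=1}^{k}\gamma(\xi_{i}-\xi_{i+1})\prod_{i=1}^{k}d\Sigma_\omega(\xi_{i}),$$
$$\Tr\bigl(\chi_{M}\bH_\omega\chi_{M}\bigr)^{k}=\int_{\bbR^{k}}\prod_{i=1}^{k}\gamma_{M}(\xi_{i},\xi_{i+1})\prod_{i=1}^{k}d\Sigma_\omega(\xi_{i})$$
(indices modulo $k$). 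By \eqref{eq:a13} and the exponential decay of $\gamma$ one has $\int_{\bbR}\gamma(\xi-\eta)\,d\Sigma_\omega(\eta)\leq C$ uniformly in $\xi$ and $\omega$; iterated integration then shows each of the two traces is $O(M)$.

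\textbf{Boundary estimate.} Choose $L=L(M)=\log M$ and split each integration domain into the bulk $I_{M}^{k}=(-M+L,M-L)^{k}$ and its complement. For $\xi_{i},\xi_{i+1}\in I_{M}$,
$$0\leq\gamma(\xi_{i}-\xi_{i+1})-\gamma_{M}(\xi_{i},\xi_{i+1})=\int_{|x|\geq M}\beta(x-\xi_{i})\beta(x-\xi_{i+1})\,dx\leq Ce^{-cL}$$
by the super-exponential decay of $\beta$ at $+\infty$ and the exponential decay at $-\infty$; hence the bulk contributions to the two traces differ by $O(Me^{-cL})$. The complementary boundary regions contribute $O(L)$ to each trace: using $\gamma_{M}\leq\gamma$ and iterated integration, one factor $d\Sigma_\omega(\xi_i)$ is confined to $[-M,-M+L]\cup[M-L,M]$, of $\Sigma_\omega$-measure $\leq 2C_\Sigma L$ by \eqref{eq:a13}. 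The total difference of traces is therefore $O(Me^{-cL}+L)=o(M)$. Combining with Theorem~\ref{thm:c2a}, extended from test functions compactly supported on $\bbR\setminus\{0\}$ to $\lambda^{k}$ via a standard cutoff argument using $\nu(\{0\})=0$ and the $O(M)$ bound on $\Tr(\chi_{M}\bH_\omega\chi_{M})^{2}$, yields \eqref{eq:c3}.

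\textbf{Main obstacle.} The crux is the boundary analysis above: one must show that switching from the ``physical-space'' cutoff $\chi_{M}$ to the ``measure-space'' cutoff $\chi_{(-M,M)}$ in the parametrisation \eqref{eq:a4omega} costs only $O(L+Me^{-cL})$, rather than the a priori $O(M)$. This rests essentially on both the strong decay of the kernel $\beta$ and the uniform local boundedness \eqref{eq:a13} of $\Sigma_\omega$. A secondary concern, the uniformity in $M$ and $\omega$ of the polynomial approximation, is handled by the Hilbert--Schmidt norm bounds quoted in the first paragraph.
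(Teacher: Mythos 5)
Your argument is correct in substance but follows a genuinely different route from the paper. The paper works at the operator level: it writes $\bH_\omega=\bL_\omega^*\bL_\omega$ and $\bH_\omega^{(M)}=\bL_\omega^*\chi_{M,\Sigma_\omega}\bL_\omega$, proves Hilbert--Schmidt commutator estimates ($\norm{\chi_M\bL_\omega^*\bL_\omega\chi_M^\perp}_2=O(1)$ and $\norm{\chi_M\bL_\omega^*\bL_\omega\chi_M-\bL_\omega^*\chi_{M,\Sigma_\omega}\bL_\omega}_1=O(\sqrt M)$), and then converts these into trace asymptotics for $\varphi(\cdot)$ via the Laptev--Safarov inequality and Peller's operator-Lipschitz bound $\norm{\varphi(A)-\varphi(B)}_1\leq C(\varphi)\norm{A-B}_1$, finally invoking Proposition~\ref{thm:c5}. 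You instead expand $\Tr A^k$ as an explicit $k$-fold integral against $\Sigma_\omega^{\otimes k}$, compare the two kernels $\gamma$ and $\gamma_M$ by a bulk/boundary decomposition with $L=\log M$, and handle general $\varphi$ by polynomial approximation of $\varphi(\lambda)/\lambda^2$ controlled by the $O(M)$ second-moment bounds. What your approach buys is the avoidance of the operator-Lipschitz machinery (everything reduces to Tonelli and the decay of $\beta$); what it costs is the detour through monomials and the reliance on Theorem~\ref{thm:c2a}, which the paper states but does not prove in detail (its Remark~\ref{rmk:A2-4} derives the positive case of Theorem~\ref{thm:c2a} as a by-product of the proof of Theorem~\ref{thm:c2}, so your logical ordering is the reverse of the paper's, though not circular within your own argument).

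Two small points to tighten. First, in the boundary estimate for $\Tr(\chi_M\bH_\omega\chi_M)^k$ the $\xi$-integration runs over all of $\bbR^k$, so the complement of the bulk also contains the region $\abs{\xi_j}>M$, not only $[-M,-M+L]\cup[M-L,M]$; this region is harmless because $\sup_\eta\gamma_M(\eta,\xi_j)\leq C\ee^{-(\abs{\xi_j}-M)/2}$ for $\abs{\xi_j}>M$, and integrating this against $\dd\Sigma_\omega$ gives $O(1)$ by \eqref{eq:a13}, but you should say so. Second, your closing sentence extends Theorem~\ref{thm:c2a} to $\varphi(\lambda)=\lambda^k$; this is unnecessary (and for $k=2$ the remainder $\lambda^2(1-\eta_\eps(\lambda))$ is not controlled by $\eps^{k-2}\lambda^2$ alone --- one would need the first-moment bound $\Tr(\chi_M\bH_\omega\chi_M)\leq C_hM$ available from positivity). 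The cleaner conclusion is: the monomial estimates plus polynomial approximation show that $\frac1{2M}\abs{\Tr\varphi(\bH_\omega^{(M)})-\Tr\varphi(\chi_M\bH_\omega\chi_M)}\to0$ for the \emph{original} continuous $\varphi$ supported away from the origin, and Theorem~\ref{thm:c2a} is then applied to that $\varphi$ directly.
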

In the rest of this section, we give a proof of this theorem. 
The proof introduces some technique that will come useful in the next section as well. 

\subsection{The strategy of the proof of Theorem~\ref{thm:c2}}
We start by rewriting the integral representations \eqref{eq:a4omega} and \eqref{eq:c1} in operator-theoretic terms, as operator factorisations. We will make use of the space $L^2(\bbR,\Sigma_\omega)$ (the $L^2$-space with the measure $\Sigma_\omega$). 
Let $\bL_\omega: L^2(\bbR)\to L^2(\bbR,\Sigma_\omega)$ be the convolution operator
\[
(\bL_\omega f)(\xi)=\int_{-\infty}^\infty \beta(x-\xi)f(x)\dd x, \quad \xi\in\bbR. 
\]
From the definition of $\beta$ and the uniform local boundedness condition \eqref{eq:a13} on $\Sigma_\omega$ it is easy to see that $\bL_\omega$ is bounded. The adjoint $\bL_\omega^*:L^2(\bbR,\Sigma)\to L^2(\bbR)$ is given by 
\[
(\bL_\omega^*f)(x)=\int_{-\infty}^\infty \beta(x-\xi)f(\xi)\dd\Sigma(\xi).
\]
In addition to the operator $\chi_M$ in $L^2(\bbR)$ (see \eqref{eq:cm}), we also need the operator of multiplication by the characteristic function of $(-M,M)$ acting in $L^2(\bbR,\Sigma_\omega)$. We denote the latter operator by $\chi_{M,\Sigma_\omega}$. With this notation, we have 
\begin{equation}
\bH_\omega=\bL_\omega^*\bL_\omega
\quad\text{ and }\quad
\bH_\omega^{(M)}=\bL_\omega^*\chi_{M,\Sigma_\omega}\bL_\omega.
\label{eq:c3b}
\end{equation}
We can now explain the strategy of the proof of Theorem~\ref{thm:c2}. 
We start from \eqref{eq:c28}; our aim is to replace $\chi_M\varphi(\bH_\omega)\chi_M$ in the left-hand side there by $\varphi(\bH_\omega^{(M)})$. This will be done in two steps, which in our new notation can be written as
\[
\chi_M\varphi(\bL_\omega^*\bL_\omega)\chi_M
\to
\varphi(\chi_M\bL_\omega^*\bL_\omega\chi_M)
\to
\varphi(\bL_\omega^*\chi_{M,\Sigma_\omega}\bL_\omega).
\]
Roughly speaking, this reduces to (i) preparing some commutator type estimates for $\bL_\omega$ and $\chi_M$, $\chi_{M,\Sigma_\omega}$ and (ii) using some standard functional calculus to account for the function $\varphi$. 

The argument below has a deterministic nature and works for every $\omega$, as long as the uniform local boundedness condition \eqref{eq:a13} is satisfied. 

\subsection{Commutator type estimates}
We introduce the shorthand 
\[
\chi_M^\perp=I-\chi_M \quad\text{ and }\quad \chi_{M,\Sigma_\omega}^\perp=I-\chi_{M,\Sigma_\omega}.
\]
Below $\norm{\cdot}_2$ is the Hilbert-Schmidt norm and $\norm{\cdot}_1$ is the trace norm. 
\begin{lemma}\label{lma:c8}
We have
\begin{align}
\norm{\chi_{M,\Sigma_\omega}\bL_\omega}_2&=O(\sqrt{M}),
\label{eq:c35}
\\
\norm{\chi_{M,\Sigma_\omega}^\perp \bL_\omega\chi_{M}}^2_2
+
\norm{\chi_{M,\Sigma_\omega}\bL_\omega\chi_M^\perp}_2^2
&=O(1)
\label{eq:c36}
\end{align}
as $M\to\infty$. 
\end{lemma}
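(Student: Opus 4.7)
The plan is to express both estimates as Hilbert--Schmidt norm computations for integral operators and to reduce them to properties of $\beta$ together with the uniform local boundedness condition \eqref{eq:a13}. The key explicit inputs are the identities
\[
\norm{\beta}_{L^2(\bbR)}^2 = \tfrac{1}{2},
\qquad
\int_a^\infty \abs{\beta(y)}^2 \dd y = \tfrac12 e^{-2e^a},
\qquad
\int_{-\infty}^a \abs{\beta(y)}^2 \dd y \leq e^a,
\]
all of which follow by the substitution $u=\ee^y$ (together with $1-\ee^{-t}\leq t$ for the last one). These encode the double-exponential decay of $\beta$ at $+\infty$ and the merely exponential decay at $-\infty$.

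For \eqref{eq:c35}, I would view $\chi_{M,\Sigma_\omega}\bL_\omega$ as an integral operator from $L^2(\bbR)$ to $L^2(\bbR,\Sigma_\omega)$ with kernel $(\xi,x)\mapsto \chi_{(-M,M)}(\xi)\beta(x-\xi)$. Its Hilbert--Schmidt norm squared in this context factors as $\norm{\beta}_{L^2}^2 \cdot \Sigma_\omega((-M,M))$, and a covering of $(-M,M)$ by $O(M)$ unit intervals together with \eqref{eq:a13} yields $\Sigma_\omega((-M,M))=O(M)$, giving the claimed $O(\sqrt{M})$ bound.

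For \eqref{eq:c36}, each of the two summands is the Hilbert--Schmidt norm squared of an integral operator with kernel of the form $\chi_A(\xi)\beta(x-\xi)\chi_B(x)$, where $A,B$ are complementary ``inside'' or ``outside'' regions relative to $(-M,M)$. For the first summand, I would split $\abs{\xi}>M$ into $\xi>M$ and $\xi<-M$ and evaluate the inner $x$-integral over $(-M,M)$ using the tail bounds: when $\xi>M$ the inner integral is majorised by $e^{M-\xi}$, and when $\xi<-M$ by $\tfrac12 e^{-2e^{-M-\xi}}$. A dyadic decomposition of the $\xi$-region into unit intervals at distance $k$ from the boundary, combined with \eqref{eq:a13}, then converts the $\Sigma_\omega$-integral into a convergent series whose sum is bounded independently of $M$. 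The second summand is handled analogously, with the roles of the two variables exchanged: one splits $\abs{x}>M$, uses the tail bounds on the inner $x$-integral, and then uses local boundedness of $\Sigma_\omega$ inside $(-M,M)$ to sum unit contributions approaching each endpoint.

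The main obstacle is the bookkeeping in \eqref{eq:c36}: there are four sub-cases, and in each the relevant tail of $\beta$ is different. The asymmetry of $\beta$ means that one of the two bounds is vastly tighter than the other, but both are summable once paired with the uniform local boundedness, so the resulting $O(1)$ estimate is the same. Nothing in the argument is probabilistic: it is purely deterministic and works for every $\omega$ for which \eqref{eq:a13} holds with a constant uniform in $\omega$.
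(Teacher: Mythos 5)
Your proposal is correct and follows essentially the same route as the paper: both reduce the claims to Hilbert--Schmidt kernel computations for the double integrals of $\beta(x-\xi)^2$ and then invoke the uniform local boundedness condition \eqref{eq:a13} via a decomposition into unit intervals. The only difference is that you track the asymmetric (doubly exponential versus exponential) tails of $\beta$ separately, whereas the paper makes do with the cruder symmetric bound $\beta(\xi)\leq \ee^{-\abs{\xi}/2}$, which already suffices for the $O(1)$ estimate.
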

\begin{proof}
Throughout the proof, we shall omit the index $\omega$ everywhere for readability. 
Let us prove \eqref{eq:c35}. 
Writing the Hilbert-Schmidt norm in terms of the integral kernel, we find
\begin{align*}
\norm{\chi_{M,\Sigma}\bL}_2^2
=
\int_{-M}^M \int_{-\infty}^\infty \beta(x-\xi)^2\dd x\, \dd\Sigma(\xi)
=
\int_{-M}^M\dd\Sigma(\xi) \int_{-\infty}^\infty \beta(x)^2\dd x
=
O(M)
\end{align*}
by the uniform local boundedness condition \eqref{eq:a13}. 

Consider \eqref{eq:c36}:
\begin{align}
\norm{\chi_{M,\Sigma}^\perp \bL\chi_{M}}^2_2
+
\norm{\chi_{M,\Sigma}\bL\chi_M^\perp}_2^2
=&
\int_{-M}^M\dd x\int_{\bbR\setminus(-M,M)}\dd\Sigma(\xi) \beta(x-\xi)^2
\notag
\\
&+
\int_{\bbR\setminus(-M,M)}\dd x\int_{-M}^M\dd\Sigma(\xi) \beta(x-\xi)^2. 
\label{eq:c18}
\end{align}
Using $\xi\leq \ee^\xi$ for $\xi\geq0$, we find that $\beta(\xi)\leq\ee^{-\abs{\xi}/2}$. Using the last inequality, we can estimate the first integral in the right-hand side of \eqref{eq:c18} as
\begin{align*}
\int_{-M}^M&\dd x\int_{\bbR\setminus(-M,M)}\dd\Sigma(\xi) \beta(x-\xi)^2
\\
&\leq
\int_{-\infty}^M \dd x\int_{M}^\infty \dd\Sigma(\xi) \ee^{-\abs{x-\xi}}
+
\int_{-M}^\infty \dd x\int_{-\infty}^{-M}\dd\Sigma(\xi) \ee^{-\abs{x-\xi}}
\\
&=
\int_{M}^\infty \ee^{-(\xi-M)}\dd\Sigma(\xi)
+
\int_{-\infty}^{-M}\ee^{\xi+M}\dd\Sigma(\xi)\, .
\end{align*}
By the uniform local boundedness condition \eqref{eq:a13} we find that the integrals in the right-hand side here are  bounded uniformly in $M$. In the same way one considers the second integral in the right-hand side of \eqref{eq:c18}. 
\end{proof}

\begin{lemma}\label{lma:A2-2}
We have
\begin{align}
 \norm{\chi_M \bL_\omega^*\bL_\omega \chi_M^\perp}_2&=O(1), 
\label{eq:A2-6}
\\
\norm{\chi_M \bL_\omega^*\bL_\omega\chi_M-\bL_\omega^*\chi_{M,\Sigma_\omega}\bL_\omega}_1&=O(\sqrt{M}), 
\label{eq:A2-5}
\end{align}
as $M\to\infty$. 
\end{lemma}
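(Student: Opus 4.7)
The plan is to exploit the factorisation $\bH_\omega=\bL_\omega^*\bL_\omega$ from \eqref{eq:c3b} and reduce both bounds to the commutator-type estimates of Lemma~\ref{lma:c8}. The main tool will be the Schatten ideal inequalities $\norm{AB}_2\leq\norm{A}\norm{B}_2$ and $\norm{AB}_1\leq\norm{A}_2\norm{B}_2$, combined with the trivial $\norm{\bL_\omega}=O(1)$ inherited from boundedness of $\bH_\omega$. Throughout, I suppress $\omega$ for readability and abbreviate $\chi=\chi_{M,\Sigma_\omega}$.

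For \eqref{eq:A2-6}, I would insert $I=\chi+\chi^\perp$ in the middle:
\[
\chi_M\bL^*\bL\chi_M^\perp=\chi_M\bL^*\chi\bL\chi_M^\perp+\chi_M\bL^*\chi^\perp\bL\chi_M^\perp .
\]
In the first term I group as $(\chi\bL\chi_M)^*(\chi\bL\chi_M^\perp)$ and use $\norm{AB}_2\leq\norm{A}\norm{B}_2$ to bound it by $\norm{\bL}\cdot\norm{\chi\bL\chi_M^\perp}_2=O(1)$ via \eqref{eq:c36}. In the second term I group as $(\chi^\perp\bL\chi_M)^*(\chi^\perp\bL\chi_M^\perp)$ and argue symmetrically, again using \eqref{eq:c36}. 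Neither term requires \eqref{eq:c35}, which is precisely why we get $O(1)$ rather than $O(\sqrt{M})$.

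For \eqref{eq:A2-5}, I would expand both operators using $I=\chi+\chi^\perp$ and $I=\chi_M+\chi_M^\perp$ and take the difference. A direct computation gives
\begin{align*}
\chi_M\bL^*\bL\chi_M-\bL^*\chi\bL
&=\chi_M\bL^*\chi^\perp\bL\chi_M-\chi_M\bL^*\chi\bL\chi_M^\perp\\
&\quad-\chi_M^\perp\bL^*\chi\bL\chi_M-\chi_M^\perp\bL^*\chi\bL\chi_M^\perp .
\end{align*}
The first and fourth terms are of the form $A^*A$ with $A=\chi^\perp\bL\chi_M$ or $A=\chi\bL\chi_M^\perp$, so their trace norms equal $\norm{A}_2^2=O(1)$ by \eqref{eq:c36}. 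The second and third terms are grouped as products of two Hilbert--Schmidt operators via $\norm{AB}_1\leq\norm{A}_2\norm{B}_2$; each pairs one factor $\chi\bL\chi_M$ estimated by \eqref{eq:c35} (which costs $O(\sqrt{M})$) with an off-diagonal factor $\chi\bL\chi_M^\perp$ estimated by \eqref{eq:c36} (which gives $O(1)$). Summing yields the claimed $O(\sqrt{M})$ bound.

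The only delicate point is organising the decomposition so that every term either is a manifest $A^*A$ controlled by \eqref{eq:c36}, or mixes one ``bulk'' factor (the $O(\sqrt{M})$ bound \eqref{eq:c35}) with one ``off-diagonal'' factor (an $O(1)$ bound from \eqref{eq:c36}); any pairing of two bulk factors would give only $O(M)$ and spoil the estimate. Once the decomposition is written in the form above, this pairing is automatic because $\chi$ and $\chi^\perp$ (or $\chi_M$ and $\chi_M^\perp$) sit on opposite sides of each cross term, forcing at least one off-diagonal factor to appear.
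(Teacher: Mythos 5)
Your proof is correct and follows essentially the same route as the paper: the identical insertions of $I=\chi_{M,\Sigma_\omega}+\chi_{M,\Sigma_\omega}^\perp$ and $I=\chi_M+\chi_M^\perp$, the same four-term decomposition for \eqref{eq:A2-5}, and the same pairing of Schatten inequalities with \eqref{eq:c35}--\eqref{eq:c36}. The only cosmetic difference is that the paper bounds the cross terms by $\norm{\chi_{M,\Sigma_\omega}\bL_\omega}_2\norm{\chi_{M,\Sigma_\omega}\bL_\omega\chi_M^\perp}_2$ rather than keeping the factor $\chi_M$ in the bulk term, which is the same estimate.
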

\begin{proof}
Again, we omit the index $\omega$ throughout the proof. 
Let us prove \eqref{eq:A2-6}. Writing $I=\chi_{M,\Sigma}+\chi_{M,\Sigma}^\perp$, we find
\[
\chi_M  \bL^* \bL \chi_M^\perp
=
\chi_M  \bL^*\chi_{M,\Sigma} \bL \chi_M^\perp
+
\chi_M  \bL^*\chi_{M,\Sigma}^\perp  \bL \chi_M^\perp.
\]
Using the triangle inequality for the Hilbert-Schmidt norm and the inequality $\norm{AB}_2\leq\norm{A}\norm{B}_2$, we find 
\[
\norm{\chi_M  \bL^* \bL \chi_M^\perp}_2
\leq
\norm{ \bL}\norm{\chi_{M,\Sigma} \bL \chi_M^\perp}_2
+
\norm{ \bL}\norm{\chi_{M,\Sigma}^\perp  \bL\chi_M}_2
=O(1)
\]
where we have used \eqref{eq:c36} at the last step. 

Let us prove \eqref{eq:A2-5}. 
Writing $I=\chi_M+\chi_M^\perp$ and $I=\chi_{M,\Sigma}+\chi_{M,\Sigma}^\perp$, after a little algebra we find 
\begin{align*}
\chi_M \bL^*\bL\chi_M-\bL^*\chi_{M,\Sigma}\bL
=&
\chi_M \bL^*\chi_{M,\Sigma}^\perp \bL\chi_M
-\chi_M^\perp \bL^*\chi_{M,\Sigma}\bL\chi_M^\perp
\\
&-\chi_M \bL^*\chi_{M,\Sigma}\bL\chi_M^\perp
-\chi_M^\perp \bL^*\chi_{M,\Sigma}\bL\chi_M.
\end{align*}
Using the triangle inequality for the trace norm and the ``Cauchy-Schwartz inequality in Schatten classes'' $\norm{AB}_1\leq\norm{A}_2\norm{B}_2$, from here we find
\begin{multline*}
\norm{\chi_M \bL^*\bL\chi_M-\bL^*\chi_{M,\Sigma} \bL}_1
\\
\leq
\norm{\chi_{M,\Sigma}^\perp \bL\chi_M}_2^2
+
\norm{\chi_{M,\Sigma}\bL\chi_M^\perp}_2^2
+
2\norm{\chi_{M,\Sigma}\bL}_2\norm{\chi_{M,\Sigma}\bL\chi_M^\perp}_2.
\end{multline*}
Using Lemma~\ref{lma:c8} in the right-hand side, we obtain \eqref{eq:A2-5}. 
\end{proof}

\subsection{Some functional calculus}
\begin{lemma}\label{lma:A2-3}
Let $\varphi\in C^\infty(\bbR)$ with $\supp\varphi\subset\bbR\setminus\{0\}$. Then 
\begin{align}
\abs{\Tr(\chi_M\varphi(\bH_\omega)\chi_M)-\Tr\varphi(\chi_M\bH_\omega\chi_M)}
&=O(1),
\label{eq:A2-7}
\\
\abs{\Tr\varphi(\chi_M\bH_\omega\chi_M)-\Tr\varphi(\bH_\omega^{(M)})}
&=O(\sqrt{M}),
\label{eq:A2-8}
\end{align}
as $M\to\infty$. 
\end{lemma}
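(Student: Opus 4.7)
My plan is to treat the two bounds of Lemma~\ref{lma:A2-3} by different methods, since their underlying perturbations live in different Schatten classes.

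For \eqref{eq:A2-8} I would rely on the trace-norm operator-Lipschitz property of smooth compactly supported functions: for any $\varphi\in C_c^\infty(\bbR)$ there is a constant $C_\varphi$ such that
\[
\norm{\varphi(A_1)-\varphi(A_2)}_1 \leq C_\varphi \norm{A_1-A_2}_1
\]
for all bounded self-adjoint $A_1,A_2$. This is standard via the Helffer--Sj\"ostrand representation $\varphi(A)=-\frac{1}{\pi}\int_\bbC \bar\partial\tilde\varphi(z)(z-A)^{-1}dA(z)$ combined with the resolvent identity $(z-A_1)^{-1}-(z-A_2)^{-1}=(z-A_1)^{-1}(A_1-A_2)(z-A_2)^{-1}$: the integrand has trace norm $\leq \norm{A_1-A_2}_1 \abs{\Im z}^{-2}$, and for an almost analytic extension with $\abs{\bar\partial\tilde\varphi(z)}=O(\abs{\Im z}^N)$ for some $N\geq 2$ the integral converges. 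Taking $A_1=\chi_M\bH_\omega\chi_M$ and $A_2=\bH_\omega^{(M)}$ and invoking \eqref{eq:A2-5} yields $\Abs{\Tr\varphi(A_1)-\Tr\varphi(A_2)}\leq \norm{\varphi(A_1)-\varphi(A_2)}_1 = O(\sqrt{M})$.

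The bound \eqref{eq:A2-7} is more delicate, because $\bH_\omega-\chi_M\bH_\omega\chi_M$ is not trace class and $\chi_M$ is not even compact. I would introduce the block decomposition $L^2(\bbR)=\chi_M L^2(\bbR)\oplus\chi_M^\perp L^2(\bbR)$ and write $\bH_\omega=\bH_{\mathrm d}+V$, where
\[
\bH_{\mathrm d}:=\chi_M\bH_\omega\chi_M+\chi_M^\perp\bH_\omega\chi_M^\perp,
\qquad
V:=\chi_M\bH_\omega\chi_M^\perp+\chi_M^\perp\bH_\omega\chi_M.
\]
Then $\bH_{\mathrm d}$ is block-diagonal, and $\norm{V}_2=O(1)$ by \eqref{eq:A2-6}. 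Because $\varphi(0)=0$, block-diagonal functional calculus gives $\chi_M\varphi(\bH_{\mathrm d})\chi_M=\varphi(\chi_M\bH_\omega\chi_M)$, so \eqref{eq:A2-7} is equivalent to
\[
\Abs{\Tr\bigl[\chi_M\bigl(\varphi(\bH_\omega)-\varphi(\bH_{\mathrm d})\bigr)\chi_M\bigr]}=O(1).
\]

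To prove this, I would apply the Helffer--Sj\"ostrand formula once more, now to the pair $(\bH_\omega,\bH_{\mathrm d})$, so that the difference of resolvents equals $(z-\bH_{\mathrm d})^{-1}V(z-\bH_\omega)^{-1}$. Since $(z-\bH_{\mathrm d})^{-1}$ is block-diagonal and $V$ is off-diagonal, the $(1,1)$-block of this resolvent difference reduces to $(z-\chi_M\bH_\omega\chi_M)^{-1}\, \chi_M\bH_\omega\chi_M^\perp\, R_{21}(z)$, where $R_{21}(z):=\chi_M^\perp(z-\bH_\omega)^{-1}\chi_M$. Iterating the resolvent identity once more identifies $R_{21}(z)=(z-\chi_M^\perp\bH_\omega\chi_M^\perp)^{-1}\, \chi_M^\perp\bH_\omega\chi_M\, \chi_M(z-\bH_\omega)^{-1}\chi_M$, which is Hilbert--Schmidt with $\norm{R_{21}(z)}_2=O(\abs{\Im z}^{-2})$ by \eqref{eq:A2-6}. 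Combined with $\norm{\chi_M\bH_\omega\chi_M^\perp}_2=O(1)$ and the bound $\abs{\Im z}^{-1}$ on the leftmost resolvent, the product is trace class with trace norm $O(\abs{\Im z}^{-3})$, uniformly in $M$. Integrating against an almost analytic extension with $\abs{\bar\partial\tilde\varphi(z)}=O(\abs{\Im z}^N)$ for $N\geq 3$ gives an $M$-uniform bound of order $O(1)$. The main obstacle is exactly this trace-class bookkeeping: a direct Lipschitz bound is unavailable because $V$ is only Hilbert--Schmidt and $\chi_M$ is not Hilbert--Schmidt either, and it is only the extra factor of $V$ harvested from $R_{21}(z)$ via the resolvent identity that upgrades a Hilbert--Schmidt-times-bounded product to a trace-class one and delivers the required $O(1)$ estimate.
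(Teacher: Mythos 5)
Your proposal is correct, and it diverges from the paper's proof in an instructive way. For \eqref{eq:A2-8} you and the paper do essentially the same thing: reduce to a trace-norm operator-Lipschitz bound for $\varphi$ and feed in \eqref{eq:A2-5}; the only difference is that the paper cites Peller's sharp result (with the Besov $B^1_{\infty,1}$ constant), whereas you re-derive a weaker but entirely sufficient Lipschitz bound for $C_c^\infty$ functions via Helffer--Sj\"ostrand. For \eqref{eq:A2-7} the paper simply invokes the Laptev--Safarov inequality
\[
\abs{\Tr(\chi_M\varphi(\bH_\omega)\chi_M)-\Tr\varphi(\chi_M\bH_\omega\chi_M)}
\leq
\tfrac12\norm{\varphi''}_{L^\infty}\norm{\chi_M \bH_\omega \chi_M^\perp}_2^2
\]
together with \eqref{eq:A2-6}, while you effectively reprove (a version of) that inequality from scratch: the block splitting $\bH_\omega=\bH_{\mathrm d}+V$ with $V$ off-diagonal and Hilbert--Schmidt, the observation that $\varphi(0)=0$ identifies $\chi_M\varphi(\bH_{\mathrm d})\chi_M$ with $\varphi(\chi_M\bH_\omega\chi_M)$, and the second-order resolvent expansion that harvests \emph{two} off-diagonal factors $\chi_M\bH_\omega\chi_M^\perp$ and $\chi_M^\perp\bH_\omega\chi_M$ so that the $(1,1)$-block of the resolvent difference becomes trace class with norm $O(\abs{\Im z}^{-3})$, uniformly in $M$. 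Your bookkeeping is sound (the choice $N\geq 3$ for the almost-analytic extension is exactly what makes the $\dd A(z)$-integral converge), and the quadratic dependence on $\norm{\chi_M\bH_\omega\chi_M^\perp}_2$ that emerges from your iteration is precisely the structure of the Laptev--Safarov bound. What the paper's route buys is brevity and a sharp constant; what yours buys is self-containedness. One small point worth making explicit in a write-up: to pass from $\Tr\bigl[\chi_M(\varphi(\bH_\omega)-\varphi(\bH_{\mathrm d}))\chi_M\bigr]$ to the difference of the two individual traces in \eqref{eq:A2-7}, you should note that each of $\chi_M\varphi(\bH_\omega)\chi_M$ and $\varphi(\chi_M\bH_\omega\chi_M)$ is separately trace class (as the paper remarks, this follows by the argument of Lemma~\ref{lma:c1} combined with the factorisation $\varphi(\bH)=X\bH^2$).
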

\begin{proof}
In order to prove \eqref{eq:A2-7}, we use the inequality from \cite[Theorem~1.2]{La-Sa}:
\begin{equation}
\abs{\Tr(\chi_M\varphi(\bH_\omega)\chi_M)-\Tr\varphi(\chi_M\bH_\omega\chi_M)}
\leq
\frac12\norm{\varphi''}_{L^\infty}\norm{\chi_M \bH_\omega \chi_M^\perp}_2^2
\label{eq:A2-10}
\end{equation}
and invoke \eqref{eq:A2-6} (recall that $\bH_\omega=\bL_\omega^*\bL_\omega$). 

Let us prove \eqref{eq:A2-8}. Let us denote $A=\chi_M\bH_\omega\chi_M$, $B=\bH^{(M)}_\omega$ and estimate the trace by the trace norm, 
\[
\abs{\Tr(\varphi(A)-\varphi(B))}
\leq
\norm{\varphi(A)-\varphi(B)}_1,
\]
and use the inequality \cite{Peller2}
\begin{equation}
\norm{\varphi(A)-\varphi(B)}_1\leq C(\varphi)\norm{A-B}_1,
\label{eq:A2-9}
\end{equation}
where $C(\varphi)$ is (up to a multiplicative constant) the norm of $\varphi$ in the Besov class $B^1_{\infty,1}$. Using \eqref{eq:A2-5} in the right-hand side of \eqref{eq:A2-9}, we arrive at \eqref{eq:A2-8}. 
\end{proof}

\subsection{Proof of Theorem~\ref{thm:c2}}
 Combining \eqref{eq:A2-7} and \eqref{eq:A2-8}, we find 
\[
\abs{\Tr(\chi_M\varphi(\bH_\omega)\chi_M)-\Tr\varphi(\bH_\omega^{(M)})}
=O(\sqrt{M})
\]
as $M\to\infty$. Using Proposition~\ref{thm:c5}, from here we obtain the required relation \eqref{eq:c3} for any $\varphi\in C^\infty$ vanishing near the origin. Extension to all continuous $\varphi$ is a standard approximation argument. The proof of Theorem~\ref{thm:c2} is complete. \qed

\begin{remark}\label{rmk:A2-4}
Relation \eqref{eq:A2-7} provides the proof of Theorem~\ref{thm:c2a} in the case of positive Hankel operators. 
\end{remark}

\section{The IDS measure for positive Hankel operators}
\label{sec.cc}

Throughout this section, $\bH_\omega$ is an ergodic a.s. bounded and positive Hankel operator in $L^2(\bbR)$ with the IDS measure $\nu$, and $\Sigma_\omega$ is the associated measure in \eqref{eq:a4omega}.

\subsection{The continuity of $\nu$ for positive Hankel operators}

\begin{theorem}
Let $\bH_\omega$ be a  positive bounded ergodic Hankel operator with the IDS measure $\nu$ (see \eqref{eq:c38}, \eqref{eq:c39}). Then $\nu$ has no point masses. 
\end{theorem}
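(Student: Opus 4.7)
The plan is to combine the multiplicity bound for point spectrum of positive Hankel operators (Corollary~\ref{crl:MPT}) with the Ergodic Theorem. Since $\bH_\omega \geq 0$, the deterministic spectrum lies in $[0,\infty)$, so $\nu$ is supported in $[0,\infty)$, and, as $\nu(\{0\})=0$ by convention, it suffices to prove $\nu(\{\lambda_0\})=0$ for an arbitrary fixed $\lambda_0>0$.

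Fix such a $\lambda_0$ and let $P_\omega := \chi_{\{\lambda_0\}}(\bH_\omega)$ be the spectral projection of $\bH_\omega$ at $\lambda_0$. First I would apply Corollary~\ref{crl:MPT}: since $\bH_\omega$ is positive, the point spectrum of $\bH_\omega|_{(\Ker\bH_\omega)^\perp}$ has multiplicity at most one, and as $\lambda_0>0$ is disjoint from the kernel, this forces the rank of $P_\omega$ to be at most one almost surely. In particular $P_\omega$ is trace class with $\Tr P_\omega \leq 1$ a.s.

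Next, I would invoke Lemma~\ref{lma:c1} for $\varphi=\chi_{\{\lambda_0\}}$, checking the hypothesis $\abs{\varphi(\lambda)}\leq A\lambda^2$ on $\abs{\lambda}\leq\pi C_h$ with $A=\lambda_0^{-2}$: the bound is trivial off $\{\lambda_0\}$ and becomes equality at $\lambda_0$. Lemma~\ref{lma:c1} then yields a jointly continuous and uniformly bounded integral kernel $P_\omega(x,y)$, so that the diagonal values $P_\omega(x,x)$ are well defined. The ergodicity relation $\bH_{T_a\omega}U_{\tau a}=U_{\tau a}\bH_\omega$ transfers via the functional calculus to $P_{T_a\omega}U_{\tau a}=U_{\tau a}P_\omega$, and this gives
\[
P_{T_a\omega}(x,x)=P_\omega(x+\tau a,x+\tau a),
\]
so $\{P_\omega(x,x)\}$ is an ergodic process in $x$ (with integrable sample function thanks to the uniform bound on the kernel).

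Finally, the Ergodic Theorem (Theorem~\ref{thm:erg2}), together with the definitions \eqref{eq:c38}, \eqref{eq:c39} of $\nu$, gives almost surely
\[
\nu(\{\lambda_0\})=\lim_{M\to\infty}\frac{1}{2M}\int_{-M}^M P_\omega(x,x)\,\dd x.
\]
But $P_\omega(x,x)\geq 0$ pointwise and $\int_{\bbR}P_\omega(x,x)\,\dd x=\Tr P_\omega\leq 1$ a.s., so the right-hand side is bounded above by $\lim_{M\to\infty}\frac{1}{2M}=0$. Hence $\nu(\{\lambda_0\})=0$, as required. The only step that requires genuine care is verifying that $\chi_{\{\lambda_0\}}$ satisfies the quadratic bound of Lemma~\ref{lma:c1}, so that the diagonal of the spectral projection is an honest bounded ergodic process; once this is in place, the proof is the standard ``rank-one projection has zero spatial density'' argument, with positivity (via Corollary~\ref{crl:MPT}) providing the rank-one control that is unavailable in the general self-adjoint setting.
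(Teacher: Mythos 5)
Your proof is correct and follows essentially the same route as the paper: Corollary~\ref{crl:MPT} gives that the spectral projection onto $\{\lambda_0\}$ has rank at most one, and then the almost sure formula \eqref{eq:c25} (which you re-derive via Lemma~\ref{lma:c1} and the Ergodic Theorem) shows the normalised diagonal integral of a trace-class projection vanishes in the limit. The paper's version is just a terser statement of the same argument, citing \eqref{eq:c25} directly rather than re-verifying the kernel regularity.
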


\begin{proof}
We recall that by Corollary~\ref{crl:MPT}, for all $\omega$, all positive eigenvalues of $\bH_\omega$ are simple. Take any $\lambda_*>0$ and any $\omega$. Let us apply \eqref{eq:c25} with $\Delta=\{\lambda_*\}$. The integral kernel in \eqref{eq:c25} is the kernel of a rank one operator (or zero). From here it is clear that the limit in \eqref{eq:c25} vanishes, and so $\nu(\{\lambda_*\})=0$.
\end{proof}

In Section~\ref{sec:d6} we discuss an example demonstrating that the conclusion of the above theorem fails without the assumption of positivity. 

For the purposes of comparison, we recall the continuity of the IDS measure of an ergodic Schr\"odinger operator on $L^2(\bbR^d)$ \cite[Theorem~3.2]{Pa-Fi:92} as well as its discrete analogue on $\ell^2(\bbZ^d)$ \cite[Theorem~3.4]{Pa-Fi:92} for any $d\geq1$. 

\subsection{The density of the support of $\Sigma_\omega$}
The key result of this section is Theorem~\ref{thm:D1} below, which identifies the total mass $\nu(\bbR_+)$ with the density of the support of $\Sigma_\omega$. We start by defining this density:
\[
D(\supp\Sigma_\omega)=\frac1{\tau}\bE\{\#\bigl((\supp\Sigma_\omega)\cap[0,\tau)\bigr)\},
\]
finite or infinite. 
(Notation $D(\supp\Sigma_\omega)$ is not perfect as in fact this density is independent of $\omega$.) The following statement is a simple version of an Ergodic Theorem. 

\begin{lemma}\label{lma:cc4}
 Let $\Sigma_\omega$ be an ergodic measure as in \eqref{eq:add1}  with the density $D(\supp\Sigma_\omega)$ (finite or infinite). Then almost surely, 
\begin{equation}
\lim_{M\to\infty}\frac1{2M}\#(\supp\Sigma_\omega\cap(-M,M))=D(\supp\Sigma_\omega). 
\label{eq:cc6}
\end{equation}
\end{lemma}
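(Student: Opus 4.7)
The plan is to recognise \eqref{eq:cc6} as a direct application of the Birkhoff--Khinchin ergodic theorem (Theorem~\ref{thm:erg1}) to the counting function of the support of $\Sigma_\omega$. First I would introduce the measurable function
\[
F(\omega) := \#\bigl(\supp\Sigma_\omega \cap [0,\tau)\bigr),
\]
taking values in $[0,\infty]$. Its measurability is verified by writing the number of support points in any open interval as a supremum of sums of indicators built from the values $\Sigma_\omega(I)$ on countably many rational subintervals. Using \eqref{eq:add1}, one checks that $\xi\in\supp\Sigma_{T_a\omega}$ if and only if $\xi+\tau a\in\supp\Sigma_\omega$, giving the cocycle identity
\[
F(T_a\omega) = \#\bigl(\supp\Sigma_\omega \cap [\tau a,\tau a+\tau)\bigr),
\]
which is what converts Ces\`aro-type averages of $F$ back into counts inside $(-M,M)$.

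Assume first that $\bE F<\infty$, so that $D(\supp\Sigma_\omega)=\bE F/\tau$ is finite. In the discrete case, summing the cocycle identity over $n=-N,\dots,N$ yields
\[
\sum_{n=-N}^{N} F(T^n\omega) = \#\bigl(\supp\Sigma_\omega \cap [-N\tau,(N+1)\tau)\bigr),
\]
and Theorem~\ref{thm:erg1} gives that $(2N+1)^{-1}$ times the left-hand side converges a.s.\ to $\bE F=\tau D(\supp\Sigma_\omega)$. Setting $M=N\tau$ and absorbing the boundary discrepancy, which is bounded by $F(T^{-N}\omega)+F(T^N\omega)=o(N)$ by another application of Theorem~\ref{thm:erg1}, produces \eqref{eq:cc6}. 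The continuous case is structurally identical: integrate the cocycle identity over $a\in[-M,M]$, apply the continuous form of Theorem~\ref{thm:erg1} and control the boundary terms in the same way.

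The main delicate point is the case $\bE F=\infty$, where we must show the limit is $+\infty$ almost surely. The hard part is to avoid quoting an infinite-mean ergodic theorem; instead I would argue by truncation. For each $K\in\bbN$, set $F_K:=\min(F,K)$; since $\bE F_K<\infty$, the finite-mean case applied to $F_K$ gives a.s.\
\[
\lim_{N\to\infty}\frac{1}{2N+1}\sum_{n=-N}^{N} F_K(T^n\omega) = \bE F_K.
\]
Pointwise monotonicity $F\ge F_K$ together with the cocycle identity then gives
\[
\liminf_{M\to\infty} \frac{\#\bigl(\supp\Sigma_\omega\cap(-M,M)\bigr)}{2M}\ \geq\ \frac{\bE F_K}{\tau}
\]
for every $K$, and monotone convergence $\bE F_K \uparrow \bE F = \infty$ forces the liminf to be $+\infty$, completing the argument.
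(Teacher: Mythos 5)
Your proposal is correct and follows essentially the same route as the paper: define the per-period counting function $F$, use the ergodicity relation \eqref{eq:add1} to turn Birkhoff averages of $F$ into counts over $(-M,M)$, apply Theorem~\ref{thm:erg1}, and handle the infinite-density case by truncating $F$ and letting the truncation level tend to infinity. The only differences are cosmetic (you make the measurability check and the boundary-term control explicit, which the paper leaves as "easy to identify").
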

\begin{proof}
First let us assume that $D(\supp\Sigma_\omega)$ is finite. Consider the function
\[
F(\omega)=\frac1\tau\#(\supp\Sigma_\omega\cap[0,\tau)).
\]
Observe that by the ergodicity condition \eqref{eq:add1}, 
\begin{equation}
F(T_a\omega)=\frac1\tau\#(\supp\Sigma_\omega\cap[a,a+\tau)).
\label{eq:add4b}
\end{equation}
By the Ergodic Theorem (Theorem~\ref{thm:erg1}), we have almost surely 
\[
D(\supp\Sigma_\omega)
=\bE\{F(\omega)\}
=\lim_{M\to\infty}\frac1{2M}\int_{-M}^M F(T_a\omega)\dd a
\]
in the continuous case and 
\[
D(\supp\Sigma_\omega)
=\bE\{F(\omega)\}
=\lim_{N\to\infty}\frac1{2N+1}\sum_{n=-N}^N F(T^n\omega)
\]
in the discrete case. By \eqref{eq:add4b}, it is easy to identify the right-hand sides here with the limit in \eqref{eq:cc6}. 

Let us briefly explain how to modify this argument if $D(\supp\Sigma_\omega)$ is infinite. 
Let $F$ be as above and for a rational $R>0$, define 
\[
F^{(R)}(\omega)=\min\{R,F(\omega)\}.
\]
Consider the continuous case. Applying the previous argument to $F^{(R)}$, 
\[
\liminf_{M\to\infty}\frac1{2M}
\#(\supp\Sigma_\omega\cap(-M,M))
\geq
\liminf_{M\to\infty}\frac1{2M}\int_{-M}^M F^{(R)}(T_a\omega)\dd a
=
\bE\{F^{(R)}(\omega)\}
\]
almost surely. Since  $D(\supp\Sigma_\omega)$ is infinite, the right-hand side here tends to infinity as $R\to\infty$. This proves that the left-hand side is infinite almost surely. 
The proof in the discrete case is similar, with sums instead of integrals.
\end{proof}

We are now ready to state the main result of this section. 
\begin{theorem}\label{thm:D1}
Let $\bH_\omega$ be an almost surely positive and bounded ergodic Hankel operator in $L^2(\bbR)$ with the IDS measure $\nu$ and let $\Sigma_\omega$ be the associated measure in \eqref{eq:a4omega} with the density $D(\supp\Sigma_\omega)$, finite or infinite. Then 
\[
\boxed{\nu(\bbR_+)=D(\supp\Sigma_\omega)\, .}
\]
\end{theorem}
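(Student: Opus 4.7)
The plan is to identify $\nu(\bbR_+)$ with the asymptotic normalised rank of the truncated operators $\bH_\omega^{(M)}$, combining the factorisation \eqref{eq:c3b}, the Szeg\H{o}-type Theorem~\ref{thm:c2}, and the ergodic Lemma~\ref{lma:cc4}. The argument splits naturally into the two inequalities $\nu(\bbR_+)\le D$ (easy) and $\nu(\bbR_+)\ge D$ (hard).

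\emph{Step 1 (Rank identity).} The non-zero eigenvalues of $\bH_\omega^{(M)}=\bL_\omega^*\chi_{M,\Sigma_\omega}\bL_\omega$ coincide with those of $K_\omega^{(M)}:=\chi_{M,\Sigma_\omega}\bL_\omega\bL_\omega^*\chi_{M,\Sigma_\omega}$ acting on $L^2((-M,M),\Sigma_\omega)$. Using $\wh\beta(u)=\Gamma(\tfrac12-\ii u)/\sqrt{2\pi}$ (from Section~\ref{sec:aa3}) and the identity $|\Gamma(\tfrac12-\ii u)|^2=\pi\sech(\pi u)$, one finds that $K_\omega^{(M)}$ is the Carleman-type integral operator with kernel $\tfrac12\sech(\tfrac{\xi-\xi'}{2})$. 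Its Fourier transform $\pi\sech(\pi u)$ is strictly positive, so this kernel is strictly positive definite and $K_\omega^{(M)}$ is injective on $L^2((-M,M),\Sigma_\omega)$. This yields
\[
\operatorname{rank}\bH_\omega^{(M)} \;=\; \dim L^2((-M,M),\Sigma_\omega).
\]
When $D<\infty$, the finiteness of $\bE\{\#(\supp\Sigma_\omega\cap[0,\tau))\}$ forces $\Sigma_\omega$ to be almost surely purely atomic with no finite accumulation, so this dimension equals $\#(\supp\Sigma_\omega\cap(-M,M))$; when $D=\infty$ both sides are infinite. Either way, Lemma~\ref{lma:cc4} gives
\[
\lim_{M\to\infty}\frac{\operatorname{rank}\bH_\omega^{(M)}}{2M}\;=\;D\quad\text{almost surely.}
\]

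\emph{Step 2 (Easy direction).} Pick continuous $\varphi_k\in C_c(\bbR\setminus\{0\})$ with $0\le\varphi_k\uparrow\chi_{(0,\infty)}$. Since $\varphi_k\le\chi_{(0,\infty)}$, one has $\Tr\varphi_k(\bH_\omega^{(M)})\le\operatorname{rank}\bH_\omega^{(M)}$, so by Theorem~\ref{thm:c2}
\[
\int\varphi_k\,\dd\nu\;=\;\lim_{M\to\infty}\frac{\Tr\varphi_k(\bH_\omega^{(M)})}{2M}\;\le\;\liminf_{M\to\infty}\frac{\operatorname{rank}\bH_\omega^{(M)}}{2M}\;=\;D.
\]
Monotone convergence as $k\to\infty$ yields $\nu(\bbR_+)\le D$.

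\emph{Step 3 (Hard direction — the main obstacle).} The matching bound $D\le\nu(\bbR_+)$ is equivalent to the assertion that no spectral mass of $\bH_\omega^{(M)}$ escapes to the origin:
\[
\lim_{\eps\to 0^+}\limsup_{M\to\infty}\frac{\#\{\text{eigenvalues of }\bH_\omega^{(M)}\text{ in }(0,\eps)\}}{2M}\;=\;0\quad\text{a.s.}
\]
Two routes look plausible. (a) A Weyl min-max argument on $K_\omega^{(M)}$: partition $(-M,M)$ into unit intervals, exploit the exponential decay of the $\sech$ kernel together with the uniform local boundedness \eqref{eq:a13} of $\Sigma_\omega$, and use localised trial subspaces to bound the number of eigenvalues below $\eps$ by a quantity growing like $o(M)$ as $\eps\to 0$. (b) A Fredholm determinant approach: the rank formula from Step~1 gives $\Tr\log(I+\lambda\bH_\omega^{(M)})/(2M)\sim D\log\lambda$ as $\lambda\to\infty$, while Theorem~\ref{thm:c2} (suitably extended, using the first moment bound from Theorem~\ref{thm:moments} to control the tail near $0$) identifies the same asymptotic with $\int\log(1+\lambda\lambda')\,\dd\nu(\lambda')\sim\nu(\bbR_+)\log\lambda$; matching the two coefficients of $\log\lambda$ forces $D=\nu(\bbR_+)$. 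Making either route rigorous — i.e.\ genuinely ruling out $o(M)$-size clusters of eigenvalues of $K_\omega^{(M)}$ arbitrarily close to zero — is where the technical heart of the proof lies.
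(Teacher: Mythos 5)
Your Steps 1 and 2 are sound: the identification of the non-zero spectrum of $\bH_\omega^{(M)}=\bL_\omega^*\chi_{M,\Sigma_\omega}\bL_\omega$ with that of $\chi_{M,\Sigma_\omega}\bL_\omega\bL_\omega^*\chi_{M,\Sigma_\omega}$, the injectivity of the latter via the strict positive-definiteness of the $\sech$ kernel (a nice Fourier-analytic alternative to the paper's Laplace-transform uniqueness argument in Lemma~\ref{lma:add2}), and the monotone-convergence argument together give a correct proof of $\nu(\bbR_+)\leq D(\supp\Sigma_\omega)$. But the proof stops there. Step 3 is not a proof: you correctly observe that the missing inequality is equivalent to ruling out an $o(M)$-proportion of eigenvalues of $\bH_\omega^{(M)}$ escaping to $0$ — which, as stated, is essentially a restatement of the theorem itself — and then you offer two candidate routes without carrying either out. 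Route (b) in particular does not obviously close the circle: the asymptotics $\Tr\log(I+\lambda\bH_\omega^{(M)})\sim (\operatorname{rank}\bH_\omega^{(M)})\log\lambda$ requires a lower bound on $\sum_j\log\mu_j$ over the non-zero eigenvalues, i.e.\ again a quantitative control of the small eigenvalues, which is the very thing to be proved.

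For comparison, the paper closes this gap by a mechanism different from both of your routes. After passing (as you do) to $\bG_\omega=\bL_\omega\bL_\omega^*$ in $L^2(\bbR,\Sigma_\omega)$, it proves a second Szeg\H{o}-type estimate, $\Tr\varphi(\bG_\omega^{(M)})-\Tr\bigl(\chi_{M,\Sigma_\omega}\varphi(\bG_\omega)\chi_{M,\Sigma_\omega}\bigr)=O(1)$, i.e.\ it replaces the function of the \emph{truncated} operator by the \emph{truncation} of the function of the full operator, using the commutator bounds of Lemma~\ref{lma:A2-2} and the Laptev--Safarov inequality \eqref{eq:A2-10}. The Ergodic Theorem then converts the normalised trace over $(-M,M)$ into the single-cell expectation $\frac1\tau\bE\{\Tr(\chi_{[0,\tau),\Sigma_\omega}\varphi(\bG_\omega)\chi_{[0,\tau),\Sigma_\omega})\}$, an expression in which $M$ no longer appears. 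At that point the monotone limit $\varphi_n\uparrow\chi_{(0,\infty)}$ can be taken \emph{inside} the expectation: since the untruncated $\bG_\omega$ is injective (Lemma~\ref{lma:add2}), $\varphi_n(\bG_\omega)\to I$ strongly, and the trace converges monotonically to $\dim L^2([0,\tau),\Sigma_\omega)=\#(\supp\Sigma_\omega\cap[0,\tau))$. This is how the paper avoids ever having to control eigenvalues of the truncated operators near zero: the dangerous interchange of limits is replaced by an interchange that monotone convergence handles for free. If you want to complete your argument along your own lines, proving the analogue of this second Szeg\H{o} step is the concrete missing ingredient.
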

We give the proof in Sections~\ref{sec:cc3}-\ref{sec:cc6} and discuss some consequences in Sections~\ref{sec:c7}-\ref{sec:c8}.

\subsection{Reduction to an operator in $L^2(\bbR,\Sigma_\omega)$}
\label{sec:cc3}

By Theorem~\ref{thm:c2}, we have 
\begin{align}
\lim_{M\to\infty}\frac1{2M}\Tr\varphi(\bH_\omega^{(M)})
&=\int_{-\infty}^\infty \varphi(\lambda)\dd\nu(\lambda)
\label{eq:add4}
\end{align}
for any $\varphi$ continuous and compactly supported on $\bbR_+$. The idea of the proof is to use the factorisation 
\[
\bH^{(M)}_\omega=\bL_\omega^*\chi_{M,\Sigma_\omega}\bL_\omega
\]
as in \eqref{eq:c3b}, and to ``swap the factors'' here. 
We define the operator $\bG_\omega$ in $L^2(\bbR,\Sigma_\omega)$ by 
\[
\bG_\omega=\bL_\omega \bL_\omega^*.
\]
We recall that for any bounded operator $X$ in a Hilbert space, the operators $X^*X$ and $XX^*$ are unitarily equivalent modulo kernels, i.e. 
\[
X^*X|_{(\Ker X)^\perp}
\quad\text{ is unitarily equivalent to }\quad 
XX^*|_{(\Ker X^*)^\perp}.
\]
Thus, the operators $\bH_\omega$ in $L^2(\bbR)$ and $\bG_\omega$ in $L^2(\bbR,\Sigma_\omega)$ are unitarily equivalent modulo kernels. 
Similarly, $\bH_\omega^{(M)}$ is unitarily equivalent to 
\[
\bG_\omega^{(M)}=\chi_{M,\Sigma_\omega}\bG_\omega \chi_{M,\Sigma_\omega}
\quad 
\text{ in $L^2(\bbR,\Sigma_\omega)$}
\]
modulo kernels. From here and \eqref{eq:add4} we find 
\begin{align}
\lim_{M\to\infty}\frac1{2M}\Tr\varphi(\bG_\omega^{(M)})
&=\int_{-\infty}^\infty \varphi(\lambda)\dd\nu(\lambda)
\label{eq:add4a}
\end{align}
for any $\varphi$ continuous and compactly supported on $\bbR_+$. This characterisation of $\nu$ will be our starting point in the proof of Theorem~\ref{thm:D1}. 

\subsection{$\bG_\omega$ has a trivial kernel}
We will need a simple lemma on $\bG_\omega$. 
\begin{lemma}\label{lma:add2}
The kernel of $\bG_\omega$ in $L^2(\bbR,\Sigma_\omega)$ is trivial for all $\omega$. 
\end{lemma}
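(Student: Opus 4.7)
The plan is to reduce the claim to injectivity of $\bL_\omega^*$ and then invoke uniqueness of the Laplace transform. Since
\[
\norm{\bL_\omega^* f}_{L^2(\bbR)}^2 = \jap{\bG_\omega f, f}_{L^2(\bbR,\Sigma_\omega)},
\]
one has $\Ker \bG_\omega = \Ker \bL_\omega^*$, so it suffices to show that the condition $(\bL_\omega^* f)(x) = \int \beta(x-\xi) f(\xi)\dd\Sigma_\omega(\xi) = 0$ for almost every $x \in \bbR$ forces $f = 0$ in $L^2(\bbR,\Sigma_\omega)$.

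The first step is to observe that, for $f \in L^2(\bbR,\Sigma_\omega)$, the integral defining $(\bL_\omega^* f)(x)$ converges absolutely and depends continuously on $x \in \bbR$. By Cauchy--Schwarz,
\[
\int \abs{\beta(x-\xi)}\abs{f(\xi)}\dd\Sigma_\omega(\xi) \leq \Bigl(\int \beta(x-\xi)^2 \dd\Sigma_\omega(\xi)\Bigr)^{1/2} \norm{f}_{L^2(\Sigma_\omega)},
\]
and the first factor is bounded uniformly in $x$: the uniform local boundedness \eqref{eq:a13} of $\Sigma_\omega$ combines with the decay of $\beta$ (super-exponential at $+\infty$, exponential at $-\infty$, via the bound $\beta(\xi)\leq \ee^{-\abs{\xi}/2}$ used already in the proof of Lemma~\ref{lma:c8}) to control $\int \beta(x-\xi)^2 \dd\Sigma_\omega(\xi)$. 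A dominated convergence argument then yields continuity in $x$. Consequently, if $\bL_\omega^* f = 0$ in $L^2(\bbR)$, the integral vanishes at \emph{every} $x \in \bbR$.

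The key step is a change of variables turning this identity into a Laplace transform. Using
\[
\beta(x-\xi) = \ee^{x/2}\, \ee^{-\ee^{x}\ee^{-\xi}}\, \ee^{-\xi/2}
\]
and substituting $t=\ee^x$, $u=\ee^{-\xi}$, the vanishing condition rewrites as
\[
\int_0^\infty \ee^{-tu} \dd\mu(u) = 0 \quad \text{for every } t>0,
\]
where $\mu$ is the locally finite complex Borel measure on $(0,\infty)$ defined by $\dd\mu(u) = u^{1/2} f(-\log u)\dd\tilde\Sigma_\omega(u)$, with $\tilde\Sigma_\omega$ the pushforward of $\Sigma_\omega$ under $\xi \mapsto \ee^{-\xi}$. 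The Cauchy--Schwarz bound of the previous paragraph, applied with $x=\log t$, shows that $\int_0^\infty \ee^{-tu}\dd\abs{\mu}(u) <\infty$ for every $t>0$, so the Laplace transform of $\mu$ is well defined and vanishes identically on $(0,\infty)$. Uniqueness of the Laplace transform for such measures then forces $\mu = 0$; since the weight $u^{1/2}$ is positive on $(0,\infty)$, this gives $f = 0$ $\Sigma_\omega$-almost everywhere.

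I do not anticipate a serious obstacle: the argument is a direct application of Laplace-transform uniqueness, mirroring the classical link between positive Hankel operators and Laplace transforms that already underlies \eqref{eq:a8}. The one technical point deserving attention is the pointwise definition of $(\bL_\omega^* f)(x)$ for \emph{every} $x \in \bbR$, rather than merely in the $L^2$ sense, since uniqueness is applied to a pointwise identity for all $t>0$; here the uniform local boundedness \eqref{eq:a13} of $\Sigma_\omega$ is precisely what is needed.
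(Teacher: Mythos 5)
Your proof is correct and follows essentially the same route as the paper's: reduce the claim to $\Ker\bL_\omega^*=\{0\}$ via the quadratic form, change variables $t=\ee^{-\xi}$ to recast the vanishing condition as a Laplace transform, and invoke the classical uniqueness theorem. The only (immaterial) difference is that you upgrade the a.e.\ identity to a pointwise one by proving continuity of $(\bL_\omega^*f)(x)$ in $x$, whereas the paper uses analyticity of the Laplace transform in the right half-plane.
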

\begin{proof}
This statement has a deterministic nature, and so we drop the index $\omega$. In fact, it was proved in \cite[Theorem~3.1(iv)]{PuTreil1}, but for convenience we reproduce the proof here.

Suppose $\bG F=0$ for some $F\in L^2(\bbR)$. Evaluating the quadratic form of $\bG=\bL \bL^*$ on $F$,  we find $\bL^*F=0$, or explicitly 
\begin{equation}
\int_{-\infty}^\infty \beta(x-\xi)F(\xi)\dd\Sigma(\xi)=0
\label{eq:add7}
\end{equation}
for a.e. $x\in\bbR$. We need to prove that $F=0$. The quickest way to do this is to relate the question to a classical uniqueness theorem for the Laplace transform. Let us make the change of variable $t=\ee^{-\xi}$ and rewrite \eqref{eq:add7} in terms of the measure $\sigma$, where the measures $\sigma$ and $\Sigma$ are related as explained in Section~\ref{sec:aa3}. After a short calculation, we find 
\begin{equation}
\int_0^\infty \ee^{-ts}f(t)\dd\sigma(t)=0,\quad \text{ a.e. $s\in\bbR_+$}
\label{eq:add8}
\end{equation}
where $f(\ee^{-\xi})=\ee^{-\xi/2}F(\xi)$, $f\in L^2(\bbR_+,\sigma)$. The integral here is the Laplace transform of $f$. We note that the Laplace transform of $f$ is an analytic function in the right half-plane, and \eqref{eq:add8} shows that it vanishes identically there. Now we can refer to a classical uniqueness theorem on Laplace transforms (see e.g. \cite[Chapter II, Theorem 6.3]{Widder}) to conclude that $f(t)=0$ for $\sigma$-a.e. $t>0$. 
\end{proof}

\subsection{Another Szeg\H{o} type theorem}
\begin{theorem}
For any $C^2$-smooth compactly supported function $\varphi$ on $\bbR_+$ we have almost surely
\[
\Tr\varphi(\bG_\omega^{(M)})-\Tr\bigl(\chi_{M,\Sigma_\omega}\varphi(\bG_\omega)\chi_{M,\Sigma_\omega}\bigr)=O(1), \quad M\to\infty.
\]
\end{theorem}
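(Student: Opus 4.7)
The statement is a Szeg\H{o}-type theorem for $\bG_\omega$ on the weighted space $L^2(\bbR,\Sigma_\omega)$, parallel to \eqref{eq:A2-7}. My plan is to run the same argument as in the proof of \eqref{eq:A2-7} but with the triple $(\bH_\omega,\chi_M,L^2(\bbR))$ replaced by $(\bG_\omega,\chi_{M,\Sigma_\omega},L^2(\bbR,\Sigma_\omega))$. The Laptev--Safronov inequality from \cite{La-Sa} applies to any orthogonal projection $P$ and bounded self-adjoint $A$, and in our setting it yields
\[
\bigl|\Tr\bigl(\chi_{M,\Sigma_\omega}\varphi(\bG_\omega)\chi_{M,\Sigma_\omega}\bigr)-\Tr\varphi(\bG_\omega^{(M)})\bigr|
\leq \tfrac12\norm{\varphi''}_{L^\infty}\bigl\|\chi_{M,\Sigma_\omega}\bG_\omega \chi_{M,\Sigma_\omega}^\perp\bigr\|_2^2,
\]
since $\bG_\omega^{(M)}=\chi_{M,\Sigma_\omega}\bG_\omega\chi_{M,\Sigma_\omega}$ and $\bG_\omega$ is bounded (it is unitarily equivalent to $\bH_\omega$ modulo kernels, and the kernel is trivial by Lemma~\ref{lma:add2}). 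The whole task therefore reduces to proving that the Hilbert--Schmidt norm on the right is $O(1)$ uniformly in $M$.

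For that I would compute the integral kernel of $\bG_\omega=\bL_\omega\bL_\omega^*$ with respect to $\Sigma_\omega\otimes\Sigma_\omega$:
\[
\bG_\omega(\xi,\eta)=\int_{-\infty}^\infty \beta(x-\xi)\beta(x-\eta)\, dx = K(\xi-\eta),
\]
where $K$ is the autocorrelation of $\beta$. Using the pointwise bound $\beta(x)\leq \ee^{-|x|/2}$ established inside the proof of Lemma~\ref{lma:c8}, a direct splitting of the convolution yields $|K(u)|\leq C(1+|u|)\ee^{-|u|/2}$. Thus
\[
\bigl\|\chi_{M,\Sigma_\omega}\bG_\omega \chi_{M,\Sigma_\omega}^\perp\bigr\|_2^2
=\int_{-M}^{M}\!\!\int_{\bbR\setminus(-M,M)} |K(\xi-\eta)|^2\, d\Sigma_\omega(\eta)\,d\Sigma_\omega(\xi).
\]

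The main technical step is to show this double integral is uniformly bounded. I would split into the two regions $\eta>M$ and $\eta<-M$. In the first region, substitute $u=M-\xi\geq 0$, $v=\eta-M\geq 0$ so that $\eta-\xi=u+v$, and upper-bound the integral by summing over unit intervals: using the uniform local boundedness condition \eqref{eq:a13}, each unit interval contributes at most $C_\Sigma$ of $\Sigma_\omega$-mass, and the resulting sum
\[
C_\Sigma^2\sum_{k,j\geq 0}(3+k+j)^2\,\ee^{-(k+j)}
\]
is a convergent geometric-type series independent of $M$. The region $\eta<-M$ is handled identically, and this gives the desired $O(1)$ bound. The main obstacle is really just this deterministic estimate on $\Sigma_\omega\otimes\Sigma_\omega$; it is straightforward once the decay of $K$ has been recorded, but getting the off-diagonal integral bounded requires the exponential decay of $\beta$ on \emph{both} sides (not just the $L^2$ estimate \eqref{eq:c36}).

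A minor point to address is that both traces are well defined. Since $\varphi$ is compactly supported on $\bbR_+$, write $\varphi(\lambda)=\widetilde\varphi(\lambda)\lambda^2$ with $\widetilde\varphi$ bounded; then
\[
\chi_{M,\Sigma_\omega}\varphi(\bG_\omega)\chi_{M,\Sigma_\omega}=(\chi_{M,\Sigma_\omega}\bG_\omega)\widetilde\varphi(\bG_\omega)(\bG_\omega\chi_{M,\Sigma_\omega})
\]
is trace class because $\chi_{M,\Sigma_\omega}\bG_\omega=\chi_{M,\Sigma_\omega}\bL_\omega\bL_\omega^*$ is Hilbert--Schmidt by \eqref{eq:c35} and the boundedness of $\bL_\omega^*$; similarly $\bG_\omega^{(M)}$ itself is trace class as a product of two Hilbert--Schmidt operators, so $\varphi(\bG_\omega^{(M)})$ is trace class with well-defined trace.
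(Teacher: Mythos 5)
Your proposal is correct. The reduction is the same as the paper's: both apply the Laptev--Safarov inequality \eqref{eq:A2-10} with $(\bG_\omega,\chi_{M,\Sigma_\omega})$ in place of $(\bH_\omega,\chi_M)$, so everything hinges on showing $\norm{\chi_{M,\Sigma_\omega}\bG_\omega\chi_{M,\Sigma_\omega}^\perp}_2=O(1)$. Where you diverge is in how that Hilbert--Schmidt bound is obtained. The paper never touches the kernel of $\bG_\omega$: it writes $\bG_\omega=\bL_\omega\bL_\omega^*$, inserts the partition of unity $I=\chi_M+\chi_M^\perp$ on $L^2(\bbR)$ between the two factors, and reads off the bound from the cross-term estimates already proved in Lemma~\ref{lma:c8} (namely \eqref{eq:c36} and its adjoint), so the whole proof is three lines of operator algebra. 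You instead compute the kernel of $\bG_\omega$ on $L^2(\bbR,\Sigma_\omega)$ explicitly as the autocorrelation $K(\xi-\eta)$ of $\beta$, record the decay $\abs{K(u)}\leq C(1+\abs{u})\ee^{-\abs{u}/2}$ (which is correct: the autocorrelation of $\ee^{-\abs{x}/2}$ is $(2+\abs{u})\ee^{-\abs{u}/2}$), and estimate the off-diagonal double integral against $\Sigma_\omega\otimes\Sigma_\omega$ by summing over unit cells via \eqref{eq:a13}. Your route is more computational and essentially re-derives from scratch what Lemma~\ref{lma:c8} already encodes, but it is self-contained, gives explicit constants, and makes visible exactly which decay of $\beta$ is being used; the paper's route is shorter because it recycles the commutator estimates. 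Your closing remarks on why both traces are finite (factoring $\varphi(\lambda)=\widetilde\varphi(\lambda)\lambda^2$ and using \eqref{eq:c35}) are also sound and slightly more careful than what the paper records.
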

\begin{proof}
By the inequality \eqref{eq:A2-10} (with $\bG_\omega$, $\chi_{M,\Sigma_\omega}$ in place of $\bH_\omega$, $\chi_M$), it suffices to establish the estimate 
\[
\norm{\chi_{M,\Sigma_\omega}\bG_\omega\chi_{M,\Sigma_\omega}^\perp}_2=O(1),
\]
where $\chi_{M,\Sigma_\omega}^\perp=1-\chi_{M,\Sigma_\omega}$. Since $\bG_\omega=\bL_\omega \bL_\omega^*$, we have 
\begin{align*}
\chi_{M,\Sigma_\omega}\bG_\omega \chi_{M,\Sigma_\omega}^\perp
=&
(\chi_{M,\Sigma_\omega}\bL_\omega \chi_{M,\Sigma_\omega}^\perp)\bL_\omega^* \chi_{M,\Sigma_\omega}^\perp
\\
&+
\chi_{M,\Sigma_\omega}\bL_\omega (\chi_{M,\Sigma_\omega}\bL_\omega^*\chi_{M,\Sigma_\omega}^\perp),
\end{align*}
and by Lemma~\ref{lma:A2-2} the Hilbert-Schmidt norm of the right-hand side here is $O(1)$ as $M\to\infty$. 
\end{proof}

\subsection{Proof of Theorem~\ref{thm:D1}}
\label{sec:cc6}
By \eqref{eq:add4a} and the previous theorem, we have 
\begin{equation}
\lim_{M\to\infty}\frac1{2M}\Tr(\chi_{M,\Sigma_\omega}\varphi(\bG_\omega)\chi_{M,\Sigma_\omega})=\int_0^\infty\varphi(\lambda)\dd\nu(\lambda),
\label{eq:add5}
\end{equation}
for any $C^2$-smooth compactly supported function $\varphi$ on $\bbR_+$. Fix such $\varphi$ and denote 
\[
F(\omega)=\frac1\tau\Tr\bigl(\chi_{[0,\tau),\Sigma_\omega}\varphi(\bG_\omega)\chi_{[0,\tau),\Sigma_\omega}\bigr).
\]
Here we use the argument similar to the proof of Lemma~\ref{lma:cc4}, except that now the expectation of $F$ is finite, because $\varphi$ is compactly supported. By the Ergodic Theorem (Theorem~\ref{thm:erg1}), we have 
\[
\lim_{N\to\infty}\frac1{2N+1}\sum_{n=-N}^N F(T^n\omega)=\bE\{F(\omega)\}
\]
in the discrete case and 
\[
\lim_{M\to\infty}\frac1{2M}\int_{-M}^M F(T_a\omega)\dd a=\bE\{F(\omega)\}
\]
in the continuous case. In both cases, the limits in the left-hand side are easy to identify with the limit in the left-hand side of \eqref{eq:add5}, which yields
\[
\frac1\tau\bE\{\Tr(\chi_{[0,\tau),\Sigma_\omega}\varphi(\bG_\omega)\chi_{[0,\tau),\Sigma_\omega})\}
=
\int_0^\infty\varphi(\lambda)\dd\nu(\lambda).
\]
Let us write the last relation for a sequence of functions $\varphi=\varphi_n$, where each $\varphi_n$ is $C^2$-smooth, compactly supported on $\bbR_+$ and satisfies $0\leq \varphi_n\leq1$. Moreover, we assume that $\varphi_n(\lambda)\to1$ as $n\to\infty$ for each $\lambda>0$ and $\varphi_n(\lambda)$ is monotonically increasing:
\begin{equation}
\frac1\tau\bE\{\Tr(\chi_{[0,\tau),\Sigma_\omega}\varphi_n(\bG_\omega)\chi_{[0,\tau),\Sigma_\omega})\}
=
\int_0^\infty\varphi_n(\lambda)\dd\nu(\lambda).
\label{eq:add6}
\end{equation}
It is clear that the right-hand side of \eqref{eq:add6} converges to $\nu(\bbR_+)$ (which may be finite or infinite) as $n\to\infty$. Consider the left-hand side. By Lemma~\ref{lma:add2}, for each $\omega$ we have the convergence $\varphi_n(\bG_\omega)\to I$ in the strong operator topology as $n\to\infty$. It follows that 
\[
\chi_{[0,\tau),\Sigma_\omega}\varphi_n(\bG_\omega)\chi_{[0,\tau),\Sigma_\omega}
\to 
\chi_{[0,\tau),\Sigma_\omega}
\]
in the strong operator topology for each $\omega$. Thus, the trace of the operator in the left-hand side here converges to the dimension of the space $L^2([0,\tau),\Sigma_\omega)$ (finite or infinite): 
\[
\Tr(\chi_{[0,\tau),\Sigma_\omega}\varphi_n(\bG_\omega)\chi_{[0,\tau),\Sigma_\omega})
\to
\dim L^2([0,\tau),\Sigma_\omega)
=
\#\bigl((\supp\Sigma_\omega)\cap[0,\tau)\bigr)
\]
as $n\to\infty$ for each $\omega$. Moreover, by the monotonicity of $\varphi_n$, the sequence in the left-hand side here is monotone increasing in $n$. By the monotone convergence theorem, we can pass to the limit
in the left-hand side of \eqref{eq:add6}, which finally yields
\[
\frac1\tau
\bE\{\#\bigl((\supp\Sigma_\omega)\cap[0,\tau)\bigr)\}=\nu(\bbR_+),
\]
and both sides may be finite or infinite. Finally, we recall that the left-hand side here is the definition of the density $D(\supp\Sigma_\omega)$. The proof of Theorem~\ref{thm:D1} is complete. \qed

\subsection{The case of a trivial kernel}
\label{sec:c7}

\begin{theorem}
Let $\bH_\omega$ be an ergodic a.s. bounded and positive Hankel operator in $L^2(\bbR)$ such that $\Ker\bH_\omega=\{0\}$ almost surely. Then $\nu(\bbR_+)=\infty$. 
\end{theorem}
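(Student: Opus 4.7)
The plan is to deduce the result from Theorem~\ref{thm:D1} combined with Proposition~\ref{prop:b8}. Since Theorem~\ref{thm:D1} identifies $\nu(\bbR_+)$ with the density $D(\supp\Sigma_\omega)$, it suffices to show $D(\supp\Sigma_\omega) = \infty$. I would argue by contraposition: assuming $D(\supp\Sigma_\omega) < \infty$, I will derive that $\dim\Ker\bH_\omega = \infty$ almost surely, contradicting the hypothesis.

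Suppose $D < \infty$ and set $N(\omega) = \#(\supp\Sigma_\omega \cap [0,\tau))$, so that $\bE\{N\} = \tau D < \infty$ and $N < \infty$ almost surely. The ergodicity relation \eqref{eq:add1} gives $\#(\supp\Sigma_\omega \cap [k\tau, (k+1)\tau)) = N(T_k\omega)$ in the continuous case, and analogously $N(T^k\omega)$ in the discrete case. By measure preservation, each of these is a.s.~finite, so intersecting the countable collection of full-measure events indexed by $k \in \bbZ$, almost surely $\supp\Sigma_\omega$ is locally finite in $\bbR$. Since an isolated point of the support of a Borel measure necessarily carries positive mass, $\Sigma_\omega$ is pure point a.s., and I would write $\supp\Sigma_\omega = \{a_n(\omega)\}$.

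It remains to verify the Blaschke condition \eqref{eq:b9a}. Using $\sech(a_n) \leq 2e^\tau e^{-|k|\tau}$ whenever $a_n \in [k\tau,(k+1)\tau)$, together with stationarity of $N$, I would estimate
\begin{align*}
\bE\Bigl\{\sum_n \sech(a_n)\Bigr\}
\leq 2e^\tau \bE\{N\} \sum_{k \in \bbZ} e^{-|k|\tau} < \infty,
\end{align*}
which gives $\sum_n \sech(a_n) < \infty$ almost surely. Proposition~\ref{prop:b8}(ii) then forces $\dim\Ker\bH_\omega = \infty$ a.s., the desired contradiction.

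I do not anticipate a serious obstacle: the argument is a short ergodic-theoretic estimate building on the two main ingredients already proved in the paper. The only point requiring care is the parallel treatment of the discrete and continuous cases, where the shifts $T^k$ or $T_k$ ($k \in \bbZ$) play the same role and only measure preservation, rather than full ergodicity, is invoked to transfer the ``a.s.~finite'' property from $k = 0$ to general $k$.
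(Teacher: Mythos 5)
Your proposal is correct, and it follows the same skeleton as the paper's proof: invoke Theorem~\ref{thm:D1} to reduce the claim to showing $D(\supp\Sigma_\omega)=\infty$, and argue by contraposition that $D(\supp\Sigma_\omega)<\infty$ forces the Blaschke condition \eqref{eq:b9a}, hence $\dim\Ker\bH_\omega=\infty$ by Proposition~\ref{prop:b8}(ii). Where you diverge is in the mechanism for the final estimate. The paper bounds $\sum_{\xi\in\supp\Sigma_\omega}\sech\xi$ pathwise by $\ee^\tau\sum_n(\sech\tau n)F(T^n\omega)$ (and the analogous integral in the continuous case), then uses the Birkhoff ergodic theorem to control the Ces\`aro averages of $F(T^n\omega)$ and a summation-by-parts (resp.\ integration-by-parts) argument to conclude a.s.\ convergence of the exponentially weighted series. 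You instead take the expectation of the same weighted series and use only stationarity (measure preservation of $T_k$) plus Tonelli to get $\bE\{\sum_n\sech(a_n)\}\leq 2\ee^\tau\bE\{N\}\sum_k\ee^{-\abs{k}\tau}<\infty$, whence a.s.\ finiteness. Your first-moment argument is shorter, avoids the ergodic theorem entirely for this step, and handles the discrete and continuous cases uniformly by discretising to the integer lattice; the paper's pathwise route gives the slightly stronger pointwise information that the Ces\`aro means of $F(T^n\omega)$ converge, but that is not needed here. You also make explicit a point the paper leaves implicit, namely that local finiteness of $\supp\Sigma_\omega$ (with the observation that isolated points of a support carry positive mass) yields that $\Sigma_\omega$ is pure point, which is formally required to apply Proposition~\ref{prop:b8}(ii); this is a welcome clarification rather than a gap in either argument.
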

\begin{proof}
By Theorem~\ref{thm:D1}, it suffices to prove that $\Ker\bH_\omega=\{0\}$ implies $D(\supp\Sigma_\omega)=\infty$. Let us prove the contrapositive statement: if $D(\supp\Sigma_\omega)<\infty$, then condition \eqref{eq:b9a} is satisfied and so $\dim\Ker\bH_\omega=\infty$. 
In the discrete case, using the elementary inequality 
\[
\sech(\xi+t)\leq \ee^{\abs{t}}\sech\xi, \quad \xi, t\in\bbR,
\]
and the ergodicity condition \eqref{eq:add1} for $\Sigma_\omega$, we find
\begin{align}
\sum_{\xi\in\supp\Sigma_\omega}\sech\xi
&\leq
\ee^{\tau}\sum_{n=-\infty}^\infty(\sech\tau n) \#\bigl(\supp\Sigma_\omega\cap[\tau n, \tau(n+1))\bigr)
\notag
\\
&=
\ee^{\tau}\sum_{n=-\infty}^\infty(\sech\tau n) F(T^n\omega),
\quad 
F(\omega)=\#(\supp\Sigma_\omega\cap[0,\tau)).
\label{eq:add2}
\end{align}
Since by assumption the density $D(\supp\Sigma_\omega)$ is finite, 
by the Ergodic Theorem (Theorem~\ref{thm:erg1}), the limit 
\[
\lim_{N\to\infty}\frac1{2N+1}\sum_{n=-N}^N F(T^n\omega)
\]
is finite almost surely. From here, using summation by parts, it is straightforward to see that the series in the right-hand side of \eqref{eq:add2} is finite almost surely. 

Similarly, in the continuous case, with the same $F$ we have 
\begin{align}
\sum_{\xi\in\supp\Sigma_\omega}\sech\xi
&\leq
\frac{\ee^\tau}{\tau}\int_{-\infty}^\infty (\sech\xi)\#\bigl(\supp\Sigma_\omega\cap[\xi,\xi+\tau)\bigr)\dd\xi
\notag
\\
&=\frac{\ee^\tau}{\tau}\int_{-\infty}^\infty (\sech\xi)F(T_\xi\omega)\dd\xi.
\label{eq:add3}
\end{align}
By the ergodic theorem, the limit 
\[
\lim_{M\to\infty}
\frac1{2M}\int_{-M}^M F(T_\xi\omega)\dd\xi
\]
is finite. Integrating by parts, we find that the integral in the right-hand side of \eqref{eq:add3} is finite. The proof is complete. 
\end{proof}

We can summarise the last theorem as follows:
\begin{equation}
\nu(\bbR_+)+\dim\Ker\bH_\omega=\infty
\label{eq:xx1}
\end{equation}
for positive Hankel operators.

Suppose the kernel of $\bH_\omega$ is trivial. 
Since $\nu(\bbR_+)=\infty$, it is reasonable to ask about the behaviour of $\nu((\lambda,\infty))$ as $\lambda\to0_+$. We give the simplest statement of this nature. Consider  $\bH_\omega$ such that the corresponding measure $\dd\Sigma_\omega$ is purely absolutely continuous, 
\begin{equation}
\dd\Sigma_\omega(\xi)=\rho_\omega(\xi)\dd\xi,
\label{eq:cc1}
\end{equation}
where the density $\rho_\omega$ satisfies, almost surely,
\begin{equation}
0<\rho_{\min}=\essinf_{\xi\in\bbR}\rho_\omega(\xi)\quad\text{ and }\quad
\esssup_{\xi\in\bbR}\rho_\omega(\xi)=\rho_{\max}<\infty.
\label{eq:cc2}
\end{equation}
We recall that in this case, by Proposition~\ref{prop:b8}, the kernel of $\bH_\omega$ is trivial almost surely. 
\begin{theorem}\label{thm:cc8}
Let $\bH_\omega$ be a positive ergodic Hankel operator in $L^2(\bbR)$  satisfying \eqref{eq:cc1} and \eqref{eq:cc2}. Then the corresponding IDS measure $\nu$ satisfies
\begin{equation}
\nu((\lambda,\infty))=\frac{2}{\pi}\log\frac1{\lambda}+O(1) \quad\text{ as $\lambda\to0_+$.}
\label{eq:cc3}
\end{equation}
\end{theorem}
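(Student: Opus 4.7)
The plan is to bracket $\bH_\omega$ between two deterministic scalar multiples of the Carleman operator $\bH_C$, and then combine the IDS monotonicity from Theorem~\ref{thm:c2a2} with the explicit asymptotics \eqref{eq:cc5} for $\nu_C$. Let $\bL_0$ denote the convolution operator $(\bL_0 f)(\xi)=\int\beta(x-\xi)f(x)\,\dd x$ from $L^2(\bbR)$ into $L^2(\bbR,\dd\xi)$; note that $\bL_0$ is independent of $\omega$ and coincides with $\bL_\omega$ in the special case when $\Sigma_\omega$ is Lebesgue measure. Starting from \eqref{eq:a4omega} under assumption \eqref{eq:cc1}, the quadratic form of $\bH_\omega$ factorises as
\[
\jap{\bH_\omega f,f}=\int_{-\infty}^\infty \rho_\omega(\xi)\,\abs{(\bL_0 f)(\xi)}^2\,\dd\xi.
\]

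A direct calculation using $\beta(\xi)=\ee^{\xi/2}\ee^{-\ee^\xi}$ and the change of variables $u=\ee^{-\xi}$ gives
\[
\int_{-\infty}^\infty \beta(x-\xi)\beta(y-\xi)\,\dd\xi
=\frac{\ee^{(x+y)/2}}{\ee^x+\ee^y}
=\frac{1}{2\cosh\frac{x-y}{2}},
\]
which is exactly the integral kernel of the transplanted Carleman operator $\bH_C=\calE {H}_C\calE^*$. Consequently, the positive Hankel operator associated with a constant density $c>0$ equals $c\bH_C$, and its quadratic form is $c\int\abs{\bL_0 f}^2\,\dd\xi$. Combining this with the previous display and \eqref{eq:cc2} yields, almost surely as quadratic forms on $L^2(\bbR)$,
\[
\rho_{\min}\bH_C\leq \bH_\omega\leq \rho_{\max}\bH_C.
\]
Both bracketing operators are deterministic, hence trivially ergodic on $(\Omega,\bbP)$; their kernel functions $\rho_{\min}/t$ and $\rho_{\max}/t$ satisfy \eqref{eq:a1}, so Theorem~\ref{thm:c2a2} applies.

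Applying Theorem~\ref{thm:c2a2} twice gives, for every $\lambda>0$,
\[
\nu_{\rho_{\min}\bH_C}((\lambda,\infty))\leq \nu((\lambda,\infty))\leq \nu_{\rho_{\max}\bH_C}((\lambda,\infty)).
\]
A scaling step closes the argument: for any $c>0$ one has $\chi_{(\lambda,\infty)}(c\bH_C)=\chi_{(\lambda/c,\infty)}(\bH_C)$, hence $\nu_{c\bH_C}((\lambda,\infty))=\nu_C((\lambda/c,\infty))$, and \eqref{eq:cc5} then gives
\[
\nu_{c\bH_C}((\lambda,\infty))
=\frac{2}{\pi}\log\frac{c}{\lambda}+O(1)
=\frac{2}{\pi}\log\frac{1}{\lambda}+O(1),
\quad \lambda\to 0_+,
\]
uniformly for $c$ in any fixed compact subset of $(0,\infty)$. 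Taking $c=\rho_{\min}$ and $c=\rho_{\max}$ in the two-sided bound yields matching asymptotics, which is precisely \eqref{eq:cc3}.

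I do not anticipate a genuine obstacle: the only nontrivial point is the identification of the ``constant density $1$'' Hankel operator with $\bH_C$, and this is essentially the fact (noted earlier in Section~\ref{sec:aa3}) that the Carleman kernel $1/t$ arises from $\sigma=$ Lebesgue on $\bbR_+$, which corresponds to $\Sigma=$ Lebesgue on $\bbR$. Everything else is a direct application of Theorem~\ref{thm:c2a2}, simple spectral scaling, and the asymptotics \eqref{eq:cc5}.
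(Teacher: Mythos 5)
Your proposal is correct and follows essentially the same route as the paper: bracket $\bH_\omega$ between $\rho_{\min}\bH_C$ and $\rho_{\max}\bH_C$ via \eqref{eq:cc2}, apply the IDS monotonicity of Theorem~\ref{thm:c2a2}, and conclude from the Carleman asymptotics \eqref{eq:cc5} after the trivial rescaling $\nu_{c\bH_C}((\lambda,\infty))=\nu_C((\lambda/c,\infty))$. The only difference is that you spell out the quadratic-form factorisation and the identification of the constant-density operator with $\bH_C$, which the paper leaves implicit.
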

\begin{proof}
By \eqref{eq:cc2}, we have, almost surely,
\[
\rho_{\min}\bH_C\leq \bH_\omega\leq \rho_{\max} \bH_C,
\]
where $\bH_C=\calE H_C\calE^*$ and $H_C$ is the Carleman operator. From here  by Theorem~\ref{thm:c2a2} we obtain 
\[
\nu_C((\lambda/\rho_{\min},\infty))
\leq
\nu((\lambda,\infty))
\leq
\nu_C((\lambda/\rho_{\max},\infty)).
\]
Using \eqref{eq:cc5} yields the required asymptotics \eqref{eq:cc3}. 
\end{proof}
We observe that the asymptotics \eqref{eq:cc3} is independent of $\rho_\omega$ within the class \eqref{eq:cc2}. This can be viewed as an analogue of the high-energy asymptotics of the IDS of the Schr\"odinger operator, see e.g. \cite[formula (5.36)]{Pa-Fi:92}.

\subsection{The case of infinite-dimensional kernel}\label{sec:c8}
Suppose $\dim\Ker\bH_\omega=\infty$, i.e. condition \eqref{eq:b9a} for $\Sigma_\omega$ is satisfied. It is easy to construct many examples when $D(\supp\Sigma_\omega)$ is finite, e.g. periodic $\Sigma_\omega$ (see Section~\ref{sec.dd} below). It is reasonable to ask whether $D(\supp\Sigma_\omega)$ is \emph{always} finite in the case of infinite dimensional kernels.  Here we show that this is false. We explain how to construct an example of measures $\Sigma_\omega$  satisfying condition \eqref{eq:b9a} with $D(\supp\Sigma_\omega)=\infty$.

Let $\{x_\omega(m)\}_{m\in\bbZ}$ be the sequence of i.i.d. variables with the following common distribution: $x_\omega(0)=2^k$ with probability $2^{-k}$ for all integers $k\geq1$. It is clear that $\bE\{x_\omega(0)\}=\infty$; this is the key feature of this example. 

For every $\omega$, let us construct the measure $\Sigma_\omega$ as follows:
\[
\Sigma_\omega=\sum_{m=-\infty}^\infty \frac{1}{x_\omega(m)}\sum_{j=0}^{x_\omega(m)-1}\delta_{m+(j/x_\omega(m))}. 
\]
In other words, for every $m$,  the measure $\Sigma_\omega$ is supported on $x_\omega(m)$ equidistant points on the interval $[m,m+1)$. This measure satisfies the ergodicity condition \eqref{eq:add1} with $\tau=1$ and the uniform local boundedness condition \eqref{eq:a13}. 

We have 
\begin{equation}
\sum_{\xi\in\supp\Sigma_\omega} \sech\xi \leq C\sum_{m=-\infty}^\infty (\sech m) x_\omega(m).
\label{eq:exa}
\end{equation}
Using Kolmogorov's three-series theorem, it is easy to see that the series on the right-hand side of \eqref{eq:exa} converges almost surely. Thus, \eqref{eq:b9a} is satisfied almost surely.

On the other hand, by the arguments similar to the proof of Lemma~\ref{lma:cc4} (and based on the Ergodic Theorem), we have
\[
D(\supp\Sigma_\omega)=\lim_{N\to\infty}\frac1{2N}\sum_{m=-N}^N x_\omega(m)=\bE\{x_\omega(0)\}=\infty.
\]

\section{Periodic Hankel operators}
\label{sec.d}

\subsection{The main result: properties of $\nu$}
In this section we discuss the IDS measure for periodic Hankel operators $\bH_\omega$ introduced in Example~\ref{ex:b4}. We follow \cite{PuSobolev}, where it was assumed that the $\tau$-periodic function $P$ in \eqref{eq:per2} is sufficiently smooth. This smoothness was expressed in terms of the Fourier coefficients of $P$, viz. 
\begin{equation}
\widetilde P_m=\frac1\tau\int_0^\tau \ee^{-\ii m\frac{2\pi}{\tau} \xi}P(\xi)\dd\xi, \quad m\in\bbZ.
\label{eq:d6}
\end{equation}
The periodic Hankel operators $\bH_\omega$ satisfying 
\begin{equation}
\sum_{m=-\infty}^\infty |\widetilde P_m|\, (1+\abs{m})^{1/2} < \infty,
\label{eq:d4}
\end{equation}
were dubbed \emph{smooth periodic Hankel operators} in \cite{PuSobolev}. In particular, this condition implies the continuity of $P$. 

The following theorem characterises the properties of $\nu$. 
Again, for positive Hankel operators we see a similarity with Schr\"odinger operators \eqref{eq:Schrod}.

\begin{theorem}\label{thm:d-n1}
Let $\bH_\omega$ be a smooth $\tau$-periodic Hankel operator, and let $\nu$ be its IDS measure. 
\begin{enumerate}[\rm (i)] 
\item
The measure $\nu$ has no singular continuous component. Moreover, if $\bH_\omega$ is positive, then $\nu$ has no pure point (p.p.) part, i.e. $\nu$ is purely absolutely continuous (a.c.). 
\item
If the a.c. component of $\nu$ is non-trivial, then it can be represented as a finite or infinite sum
\begin{equation}
\nu^{\rm ac}=\frac1{\tau}\sum_n \nu_n,
\label{eq:d-n2}
\end{equation}
where each $\nu_n$ is a purely a.c. probability measure and $\sigma_n=\supp \nu_n$ is a closed bounded interval (we will call it a \emph{spectral band}) separated away from the origin. The spectral bands are intervals of the a.c. spectrum of $\bH_\omega$. 
They satisfy:
\begin{itemize}
\item
$\sigma_n\cap \sigma_m$ is either empty or a single point for any $n\not=m$;
\item
$\sigma_n\cap(-\sigma_m)=\varnothing$ for any $n,m$;
\item
if there are infinitely many spectral bands, then the origin is their one and only accumulation point. 
\end{itemize}
\item
If the p.p. component of $\nu$ is non-trivial, then it has the structure
\begin{equation}
\nu^{\rm pp}=\frac1{\tau}\sum_n (\delta_{\lambda_n}+\delta_{-\lambda_n}),
\label{eq:d-n1}
\end{equation}
where $\{\lambda_n\}$ is a finite or infinite set of positive numbers. If this set is infinite, then $\lambda_n\to0$ as $n\to\infty$. The points $\{-\lambda_n\}\cup\{\lambda_n\}$ are the eigenvalues of infinite multiplicity of $\bH_\omega$; we will call them \emph{flat spectral bands.}
\end{enumerate}
\end{theorem}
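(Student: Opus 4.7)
The plan is to derive Theorem~\ref{thm:d-n1} from the Floquet--Bloch decomposition of smooth periodic Hankel operators worked out in \cite{PuSobolev}. Since $\bH$ commutes with the shift $U_\tau$ (Example~\ref{ex:b4}), I would apply the standard Bloch transform to get a direct integral decomposition
\[
\bH \cong \int^\oplus_{[0,2\pi/\tau)} \bH(k)\,\dd k,
\]
with fiber operators acting on a fixed Hilbert space realisable as $L^2([0,\tau))$ with a quasi-periodic boundary structure. The key input from \cite{PuSobolev} is that, under the smoothness condition \eqref{eq:d4}, each $\bH(k)$ is compact and depends real-analytically on $k$, so that its non-zero eigenvalues can be enumerated as piecewise real-analytic band functions $\{\lambda_n(k)\}$ accumulating only at~$0$.

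The next step would be to express $\nu$ in terms of the band functions. Starting from \eqref{eq:c38} with $\Omega=\bbT$ and $\bH_\omega=U_{\tau\omega}\bH U_{\tau\omega}^*$, an obvious change of variable gives
\[
\nu(\Delta) = \frac{1}{\tau}\int_0^\tau \chi_\Delta(\bH)(x,x)\,\dd x = \frac{1}{2\pi}\int_0^{2\pi/\tau}\#\{n : \lambda_n(k)\in\Delta\}\,\dd k,
\]
so $\nu$ is the sum over $n$ of pushforwards (each of total mass $1/\tau$) of Lebesgue measure on the Brillouin zone under the $\lambda_n$. Real analyticity then forces a dichotomy for each band: either $\lambda_n\equiv c$ on $[0,2\pi/\tau)$, in which case its contribution to $\nu$ is the Dirac mass $\tau^{-1}\delta_c$; or $\lambda_n$ is non-constant, its level sets are finite, and the pushforward is absolutely continuous on the closed interval $\sigma_n=\lambda_n([0,2\pi/\tau))$ with density $(2\pi)^{-1}\sum_{k:\lambda_n(k)=\lambda}|\lambda_n'(k)|^{-1}$. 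This directly yields the decompositions \eqref{eq:d-n2}, \eqref{eq:d-n1} and rules out a singular continuous component.

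The specifically Hankel geometric statements in (ii) and (iii) --- the at-most-one-point overlap $\sigma_n\cap\sigma_m$, the reflection pairing in \eqref{eq:d-n1}, and the disjointness $\sigma_n\cap(-\sigma_m)=\varnothing$ --- I would read off from the band-structure analysis in \cite{PuSobolev} rather than reprove. The identification of a flat band value $c$ with an eigenvalue of $\bH_\omega$ of infinite multiplicity is then immediate: a constant band corresponds to a constant section of the direct integral, which lifts to an infinite-dimensional eigenspace of $\bH_\omega$ at~$c$. Finally, the positive part of (i) is a short contradiction: if $\bH_\omega\geq 0$ and some band were flat at $c>0$, then $c$ would be an eigenvalue of $\bH_\omega$ of infinite multiplicity, violating the simplicity of the point spectrum of positive Hankel operators on $(\Ker\bH_\omega)^\perp$ recorded in Corollary~\ref{crl:MPT}; hence every band is non-flat and $\nu=\nu^{\rm ac}$.

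The main obstacle is the analytic dependence of $\bH(k)$ on $k$ and the resulting structural properties of the band functions (the enumeration, the sign-separation, and the reflection symmetry) under the precise smoothness hypothesis \eqref{eq:d4}. These are genuine inputs from \cite{PuSobolev} tailored to the Hankel setting, and the present argument is essentially a translation of their output into the language of the IDS measure $\nu$.
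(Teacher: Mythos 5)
Your proposal is correct and follows essentially the same route as the paper: the Floquet--Bloch decomposition into fiber operators, the identity expressing $\nu$ as the sum over band functions of pushforwards of normalised Lebesgue measure on the Brillouin zone, and the structural facts about flat and non-flat bands imported from \cite{PuSobolev}. The only divergence is your argument that positivity kills the p.p.\ part, via the multiplicity-one statement of Corollary~\ref{crl:MPT}; the paper instead relies on the $\pm E$ pairing of flat bands (Proposition~\ref{prp:d-n7}), and both arguments are valid.
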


\begin{remark}
\begin{enumerate}[1.]
\item
In the next section, we will give an example of an operator with a non-trivial $\nu^{\rm ac}$ and another example with a non-trivial $\nu^{\rm pp}$. It is a little harder to give an example where both a.c. and p.p. parts of $\nu$ are non-trivial (i.e. flat and non-flat bands coexist); an example of this kind is furnished by the \emph{Mathieu--Hankel} operator, see \cite[Section 8]{PuSobolev}. 
\item
We do not know if the supports of $\nu^{\rm pp}$ and $\nu^{\rm ac}$ can have a non-empty intersection (i.e. if a flat band can be located on top of a non-flat band). 
\item
From \eqref{eq:d-n1} we see that $\nu^{\rm pp}$ is symmetric with respect to the reflection around zero. This is consistent with the fact that for positive Hankel operators, the pure point component of $\nu$ disappears. 
\item
If two spectral bands $\sigma_n$ and $\sigma_m$ have a common point (i.e. if they \emph{touch}), then in principle they can be ``merged'' to form a larger spectral band. It is, however, useful to think of them as two separate bands, because they correspond to different band functions, see Theorem~\ref{thm.branches}.
\item
From the more detailed description of the band functions below, it is possible to get more refined information about the a.c. measures $\nu_n$. For example, the density of $\nu_n$ is a smooth function inside the spectral band $\sigma_n$; this density may have a quadratic singularity at the endpoints of $\sigma_n$. 
\item
From \eqref{eq:d-n2} and \eqref{eq:d-n1} we obtain a simple case of a ``gap labelling theorem'': in each spectral gap, the IDS
\[
\int_\lambda^\infty \dd\nu(x)
\]
is constant of the form $N/\tau$, where $N$ is an integer equal to the number of bands (both flat and non-flat) in the interval $(\lambda,\infty)$. See \cite[Section~16.A]{Pa-Fi:92} and references therein for the gap labelling theorem for almost periodic operators. 

\end{enumerate}
\end{remark}

Essentially, Theorem~\ref{thm:d-n1} follows from the construction of \cite{PuSobolev}. However, the notion of the IDS measure was not considered in \cite{PuSobolev}. In the rest of this section we recall the key points from \cite{PuSobolev} and show how to put them together with our definition of the IDS measure to yield the proof of Theorem~\ref{thm:d-n1}.

\subsection{The Floquet--Bloch decomposition}
In this subsection, we recall the standard Floquet--Bloch decomposition for \emph{general} periodic operators in $L^2(\bbR)$, i.e. operators commuting with shifts. Let $A$ be a bounded self-adjoint operator in $L^2(\bbR)$ commuting with $U_\tau$ for a fixed $\tau>0$.  Let $\ell^2=\ell^2(\bbZ)$ and let us consider the constant-fiber direct integral of Hilbert spaces:
\[
\mathfrak H=\int_{(-\pi/\tau,\pi/\tau)}^\oplus \ell^2\,  \dd k;
\]
here $(-\pi/\tau,\pi/\tau)$ is the dual period cell. In other words, $\mathfrak H$ is the $L^2$-space of $\ell^2$-valued functions $F=F(k)$ on $(-\pi/\tau,\pi/\tau)$ with respect to the Lebesgue measure.
Clearly, $\mathfrak H$ is isomorphic to $L^2(\bbR)$ in a natural way: a function $f\in L^2(\bbR)$ corresponds to 
\[
F(k)=\{f(k+\tfrac{2\pi}{\tau} n)\}_{n\in\bbZ}, \quad k\in (-\pi/\tau,\pi/\tau).
\]

We define the unitary map $\calU:L^2(\bbR)\to\mathfrak H$ by 
\[
[(\calU f)(k)]_n=\widehat f(k+\tfrac{2\pi}{\tau} n)=\frac1{\sqrt{2\pi}}\int_{-\infty}^\infty f(x)\ee^{-\ii kx-\ii \frac{2\pi}{\tau} nx}\dd x.
\]
The map $\calU$ is the combination of the Fourier transform $f\mapsto \widehat f$ and the natural isomorphism between $L^2(\bbR)$ and $\mathfrak H$, referred to in the previous paragraph. 
Observe that $\calU$ transforms the shift operator $U_\tau$ in $L^2(\bbR)$ into the operator of multiplication by $\ee^{\ii k\tau}$ in $\mathfrak H$.  

It follows that $\calU$ effects the Floquet--Bloch decomposition of $\tau$-periodic (i.e. commuting with $U_\tau$) operators on $L^2(\bbR)$. The precise statement is as follows:

\begin{proposition}\label{prp.b1}
Let $A$ be a bounded self-adjoint operator in $L^2(\bbR)$ that commutes with the shift operator 
$U_\tau$. Then $A$ is decomposable as
\begin{equation}
\calU A\calU^*=\int_{(-\pi/\tau,\pi/\tau)}^\oplus A(k)\, \dd k,
\label{eq:d1}
\end{equation}
with some bounded self-adjoint fiber operators $A(k)$ in $\ell^2(\bbZ)$. 
\end{proposition}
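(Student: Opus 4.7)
The plan is to reduce the claim to the standard theorem from the theory of direct integrals of Hilbert spaces, which states that a bounded operator on $\mathfrak H=\int^\oplus_{(-\pi/\tau,\pi/\tau)}\ell^2\,\dd k$ is decomposable into a measurable family of bounded fiber operators if and only if it commutes with every \emph{diagonal} multiplication operator $M_\varphi$, $\varphi\in L^\infty(-\pi/\tau,\pi/\tau)$, acting as $(M_\varphi F)(k)=\varphi(k)F(k)$. Moreover, the decomposition respects adjoints, so self-adjointness of $A$ will automatically give self-adjoint fibers.

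The first step is to verify the intertwining relation $\calU U_\tau\calU^*=M_{\ee^{\ii k\tau}}$. This is a direct computation from the definition of $\calU$ as the composition of the Fourier transform with the natural isomorphism $L^2(\bbR)\cong\mathfrak H$: on the Fourier side, $U_\tau$ becomes multiplication by $\ee^{\ii k\tau}$, and this multiplier is insensitive to the rearrangement of Fourier modes into the $\ell^2$-fibers because $\ee^{\ii(k+2\pi n/\tau)\tau}=\ee^{\ii k\tau}$ for every $n\in\bbZ$. Since $A$ commutes with $U_\tau$ by assumption, it follows that $\widetilde A:=\calU A\calU^*$ commutes with $M_{\ee^{\ii k\tau}}$.

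The second step is to upgrade this to commutation with every $M_\varphi$ for $\varphi\in L^\infty(-\pi/\tau,\pi/\tau)$. Since $k\mapsto\ee^{\ii k\tau}$ is a Borel isomorphism from the dual cell onto the unit circle, the von Neumann algebra generated by $M_{\ee^{\ii k\tau}}$ coincides with the full algebra of diagonal multiplication operators; this is a routine Stone--Weierstrass plus strong-operator-closure argument. The direct integral theorem then produces the decomposition \eqref{eq:d1} with bounded measurable fibers $A(k)$, and self-adjointness of each $A(k)$ follows from essential uniqueness of the decomposition applied to $A=A^*$. The only slightly delicate point is the measurable selection of the family $\{A(k)\}$, but this is handled entirely by the general direct-integral machinery and requires nothing specific to Hankel operators.
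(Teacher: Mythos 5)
Your argument is correct and follows essentially the same route as the paper: establish that $\calU A\calU^*$ commutes with multiplication by $\ee^{\ii k\tau}$, pass to the full algebra of diagonal multiplication operators by trigonometric-polynomial approximation and strong-operator limits, and invoke the standard decomposability theorem for direct integrals (the paper cites \cite[Theorem XIII.84]{RS4}). Your additional remarks on the Borel isomorphism $k\mapsto\ee^{\ii k\tau}$ and on deducing self-adjointness of the fibers from essential uniqueness only make explicit what the paper leaves implicit.
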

\begin{proof} 
As $A$ commutes with shifts, we find that $\calU A\calU^*$ commutes with the operator of multiplication by $\ee^{\ii k\tau}$ in $\mathfrak H$. By taking linear combinations and limits in strong operator topology, we find that $\calU A\calU^*$ commutes with the operators of multiplication by any bounded function of $k$ in $\mathfrak H$.
The proof is now complete by reference to \cite[Theorem XIII.84]{RS4}. 
\end{proof}

\subsection{The integral of the trace of $A(k)$ over the period}

Let $A$ be as in Proposition~\ref{prp.b1}. We want to recall a calculation for the integral of the trace of $A$ over the period. We will not be very precise with our assumptions, since in the application to periodic Hankel operators below it will be clear that all relevant objects are well-defined. Let us assume that $A$ has  the integral kernel $A(x,y)$, continuous in $x,y\in\bbR$. Furthermore, we will assume that the fiber operators $A(k)$ in \eqref{eq:d1} are trace class. 
\begin{proposition}\label{lma:b2}
Under the above assumptions, the identity
\[
\frac1{\tau}\int_0^\tau A(x,x)\dd x=\frac1{2\pi}\int_{-\pi/\tau}^{\pi/\tau}\Tr A(k)\dd k
\]
holds true.
\end{proposition}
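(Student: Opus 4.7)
The plan is to derive an explicit formula for the kernel $A(x,y)$ in terms of the matrix elements of the fibers $A(k)$, and then to substitute $y=x$ and average over a period, with the integral over $[0,\tau]$ collapsing the double sum to its diagonal via orthogonality of exponentials.

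First I would compute $\calU^*$ explicitly. By pairing with $\calU f$ in $\mathfrak H$ and using Plancherel, one finds that for a section $G=\{G_n(k)\}_{n\in\bbZ}$,
\[
(\calU^* G)(x)=\frac{1}{\sqrt{2\pi}}\int_{-\pi/\tau}^{\pi/\tau}\sum_{n\in\bbZ} G_n(k)\,\ee^{\ii k_n x}\,\dd k, \quad k_n:=k+\tfrac{2\pi n}{\tau}.
\]
Substituting this into $A=\calU^*\bigl(\int^\oplus A(k)\dd k\bigr)\calU$ and using $[(\calU f)(k)]_m=\widehat f(k_m)$, a direct manipulation (Fourier inversion applied once more) shows that the integral kernel of $A$ must be
\[
A(x,y)=\frac{1}{2\pi}\int_{-\pi/\tau}^{\pi/\tau}\sum_{n,m\in\bbZ}A(k)_{nm}\,\ee^{\ii k_n x-\ii k_m y}\,\dd k,
\]
where $A(k)_{nm}=\jap{A(k)e_m,e_n}_{\ell^2}$ and $\{e_n\}$ is the standard basis of $\ell^2(\bbZ)$.

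Next I would restrict to the diagonal $y=x$. Using $k_n-k_m=\tfrac{2\pi(n-m)}{\tau}$, we get
\[
A(x,x)=\frac{1}{2\pi}\int_{-\pi/\tau}^{\pi/\tau}\sum_{n,m}A(k)_{nm}\,\ee^{\ii\frac{2\pi(n-m)}{\tau}x}\,\dd k.
\]
Averaging over $x\in[0,\tau]$ and interchanging the orders of summation and integration (justified below), the orthogonality relation $\int_0^\tau \ee^{\ii\frac{2\pi(n-m)}{\tau}x}\dd x=\tau\delta_{n,m}$ kills every off-diagonal term, leaving
\[
\frac{1}{\tau}\int_0^\tau A(x,x)\dd x=\frac{1}{2\pi}\int_{-\pi/\tau}^{\pi/\tau}\sum_{n}A(k)_{nn}\,\dd k=\frac{1}{2\pi}\int_{-\pi/\tau}^{\pi/\tau}\Tr A(k)\,\dd k,
\]
which is the claimed identity.

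The only technical issue is the legitimacy of the interchanges of sum and integral; this is where I expect the main (mild) obstacle. The trace-class assumption on $A(k)$ supplies $\sum_n |A(k)_{nn}|\leq\norm{A(k)}_1$, while absolute convergence of the off-diagonal sum is controlled by Cauchy--Schwarz using $\norm{A(k)}_1$ and continuity of the kernel. Under the standing hypotheses of the proposition (continuous kernel $A(x,y)$ and trace-class fibers), a routine dominated-convergence argument, applied to a truncated partial sum over $|n|,|m|\leq N$ and followed by $N\to\infty$, validates all interchanges. One may alternatively present the argument distributionally, observing that the kernel of a $\tau$-periodic operator necessarily has Fourier support on $\xi-\eta\in\tfrac{2\pi}{\tau}\bbZ$, but the elementary integration sketched above seems the cleanest route.
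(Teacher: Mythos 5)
Your proposal is correct and follows essentially the same route as the paper: derive the kernel representation $A(x,y)=\frac{1}{2\pi}\int_{-\pi/\tau}^{\pi/\tau}\sum_{n,m}[A(k)]_{n,m}\ee^{\ii k_n x-\ii k_m y}\dd k$ from the definition of $\calU$, set $y=x$, and let the orthogonality of $\ee^{\ii\frac{2\pi}{\tau}(n-m)x}$ over a period kill the off-diagonal terms. Your added care about justifying the interchanges of sum and integral goes slightly beyond the paper, which explicitly declines to be precise about these points since they are clear in the intended application.
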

\begin{proof}
For every $k\in(-\pi/\tau,\pi/\tau)$, let us denote by $[A(k)]_{n,m}$ the $(n,m)$'th matrix element of the operator $A(k)$ with respect to the canonical basis in $\ell^2$. From the definition of $\calU$ and the decomposition \eqref{eq:d1} we find that the integral kernel of $A(x,y)$ can be represented as
\[
A(x,y)=
\frac1{2\pi}\sum_{n,m\in\bbZ}\int_{-\pi/\tau}^{\pi/\tau}[A(k)]_{n,m}\ee^{\ii(k+\frac{2\pi}{\tau} n)x}\ee^{-\ii(k+\frac{2\pi}{\tau} m)y}\dd k.
\]
Set $x=y$
\[
A(x,x)=
\frac1{2\pi}\sum_{n,m\in\bbZ}\int_{-\pi/\tau}^{\pi/\tau}[A(k)]_{n,m}\ee^{\ii\frac{2\pi}{\tau}(n-m)x}\dd k
\]
and integrate over $x\in(0,\tau)$. Then all terms with $n\not=m$ in the right-hand side vanish, and we obtain
\[
\frac1{\tau}\int_{0}^\tau A(x,x)\dd x
=
\frac1{2\pi}\sum_{n\in\bbZ}\int_{-\pi/\tau}^{\pi/\tau}[A(k)]_{n,n}\dd k
=
\frac1{2\pi}\int_{-\pi/\tau}^{\pi/\tau}\Tr A(k)\dd k,
\]
as claimed. 
\end{proof}

\subsection{The IDS measure in terms of $\bh(k)$: preliminary result}
Let us discuss the Floquet--Bloch decomposition of smooth periodic Hankel operators $\bH_\omega$. We recall (see \eqref{eq:x3}) that 
\[
\bH_\omega=U_{\tau\omega}\bH U_{\tau\omega}^*, \quad \omega\in\bbT,
\]
where $\bH$ is a Hankel operator in $L^2(\bbR)$ that commutes with $U_\tau$. 

We apply the ``abstract'' considerations of the previous two subsections to $A=\bH$. Let us write the Floquet--Bloch decomposition \eqref{eq:d1} for $\bH$ as 
\begin{equation}
\calU\bH \calU^*=\int_{(-\pi/\tau,\pi/\tau)}^\oplus \bh(k)\, \dd k,
\label{eq:d3}
\end{equation}
with some fiber operators $\bh(k)$ acting in $\ell^2(\bbZ)$. It was proved in \cite[Lemma~2.6]{PuSobolev} that under the smoothness assumption \eqref{eq:d4}, the fiber operators $\bh(k)$ are \emph{trace class}. We state the following preliminary result. 

\begin{theorem}
Let $\bH_\omega$ be a smooth $\tau$-periodic Hankel operator and let $\nu$ be the corresponding IDS measure. For any continuous function $\varphi$ compactly supported in $\bbR\setminus\{0\}$ one has 
\begin{equation}
\int_{-\infty}^\infty \varphi(\lambda)\dd\nu(\lambda)
=
\frac1{\pi}
\int_{0}^{\pi/\tau}\Tr \varphi(\bh(k))\dd k,
\label{eq:d5}
\end{equation}
where $\bh(k)$ are the fiber operators in \eqref{eq:d3}. 
\end{theorem}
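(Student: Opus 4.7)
My approach is to use the Floquet--Bloch machinery developed in Propositions~\ref{prp.b1} and \ref{lma:b2} together with the continuous-case definition \eqref{eq:c38} of the IDS. First I would observe that since $\varphi$ is continuous and compactly supported in $\bbR\setminus\{0\}$, we have the bound $|\varphi(\lambda)|\leq C\lambda^2$ for all $\lambda$ in any compact set, so Lemma~\ref{lma:c1} applies and $\varphi(\bH_\omega)$ has a jointly continuous bounded integral kernel. Integrating \eqref{eq:c38} against $\varphi$, I get
\[
\int_{-\infty}^\infty \varphi(\lambda)\dd\nu(\lambda)=\bE\{\varphi(\bH_\omega)(0,0)\}.
\]

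Next, I would use the explicit form $\bH_\omega = U_{\tau\omega}\bH U_{\tau\omega}^*$ from \eqref{eq:x3}: applying the functional calculus and reading off integral kernels gives $\varphi(\bH_\omega)(x,y)=\varphi(\bH)(x-\tau\omega,\,y-\tau\omega)$. Setting $x=y=0$ and integrating in $\omega$ over $\bbT$ with the normalised Lebesgue measure, a change of variable and the $\tau$-periodicity of the diagonal $x\mapsto \varphi(\bH)(x,x)$ (which follows from $U_\tau \bH U_\tau^*=\bH$ and hence $U_\tau\varphi(\bH)U_\tau^*=\varphi(\bH)$) yield
\[
\int_{-\infty}^\infty \varphi(\lambda)\dd\nu(\lambda)
=\frac{1}{\tau}\int_0^\tau \varphi(\bH)(x,x)\dd x.
\]

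Then I would apply Proposition~\ref{lma:b2} to the operator $A=\varphi(\bH)$, whose Floquet--Bloch decomposition is $\int^\oplus\varphi(\bh(k))\dd k$. The hypotheses of that proposition require continuity of the diagonal kernel (supplied by Lemma~\ref{lma:c1}) and trace-class fibers. For the latter, I would invoke \cite[Lemma~2.6]{PuSobolev}: under the smoothness assumption \eqref{eq:d4}, each $\bh(k)$ is trace class, so its non-zero eigenvalues form a discrete sequence accumulating only at $0$; since $\varphi$ vanishes in a neighbourhood of $0$, the operator $\varphi(\bh(k))$ has finite rank, hence is trace class. This gives
\[
\frac{1}{\tau}\int_0^\tau \varphi(\bH)(x,x)\dd x
=\frac{1}{2\pi}\int_{-\pi/\tau}^{\pi/\tau}\Tr\varphi(\bh(k))\dd k.
\]

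Finally, I would establish the symmetry $\Tr\varphi(\bh(k))=\Tr\varphi(\bh(-k))$ to fold the integration onto $(0,\pi/\tau)$ and obtain the factor $1/\pi$ in \eqref{eq:d5}. Since the kernel function $h$ is real-valued, $\bH$ has a real symmetric integral kernel, so complex conjugation $C$ on $L^2(\bbR)$ commutes with $\bH$. At the Floquet level this reflection intertwines $\bh(k)$ with the complex conjugate $\overline{\bh(-k)}$, which has the same spectrum (with multiplicities) as $\bh(-k)$; since the trace of $\varphi$ applied to an operator depends only on its spectrum, the desired symmetry follows.

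\textbf{Main obstacle.} The routine steps are the two changes of variable and the application of Proposition~\ref{lma:b2}. The only genuine technical care is in verifying that the fiber operators $\varphi(\bh(k))$ are trace class and that $k\mapsto \Tr\varphi(\bh(k))$ is measurable and integrable, which in turn rests on the smoothness hypothesis \eqref{eq:d4} entering through the trace-class bound of \cite[Lemma~2.6]{PuSobolev}. The $k\mapsto -k$ symmetry is conceptually clean but deserves the brief conjugation argument above so that the dual period cell can be halved.
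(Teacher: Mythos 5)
Your proposal is correct and follows essentially the same route as the paper: reduce $\int\varphi\,\dd\nu$ to $\frac1\tau\int_0^\tau\varphi(\bH)(x,x)\dd x$ using periodicity of the diagonal kernel, apply Proposition~\ref{lma:b2} to $A=\varphi(\bH)$ with the trace-class fibers from \cite[Lemma~2.6]{PuSobolev}, and fold $(-\pi/\tau,\pi/\tau)$ onto $(0,\pi/\tau)$ via the $k\mapsto-k$ symmetry (the paper reads this off the explicit matrix \eqref{eq:d7}, while your conjugation argument is an equivalent way to see it). The only blemish is a harmless sign slip: with the convention \eqref{eq:b3} one has $\varphi(\bH_\omega)(x,y)=\varphi(\bH)(x+\tau\omega,y+\tau\omega)$, which does not affect the averaged result.
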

\begin{proof}
The operators $\bH_\omega$ (and their integer powers) have continuous integral kernels.  Furthermore, these integral kernels are $\tau$-periodic on the diagonal and 
\[
\bH_\omega(x,x)=\bH(x+\tau\omega,x+\tau\omega). 
\]
It follows that for any $\omega\in\bbT$ and any polynomial $\varphi$ with $\varphi(0)=0$, 
\begin{align*}
\lim_{M\to\infty}\frac1{2M}\Tr(\chi_M\varphi(\bH_\omega)\chi_M)
=
\frac1{\tau}\Tr(\chi_{(0,\tau)}\varphi(\bH)\chi_{(0,\tau)}). 
\end{align*}
Applying Proposition~\ref{lma:b2}, we obtain 
\begin{align*}
\frac1{\tau}\Tr(\chi_{(0,\tau)}\varphi(\bH)\chi_{(0,\tau)})
=
\frac1{2\pi}
\int_{-\pi/\tau}^{\pi/\tau}\Tr \varphi(\bh(k))\dd k.
\end{align*}
 Comparing with formula~\eqref{eq:c28} for the density of states measure, we arrive at
\[
\int_{-\infty}^\infty \varphi(\lambda)\dd\nu(\lambda)
=
\frac1{2\pi}
\int_{-\pi/\tau}^{\pi/\tau}\Tr \varphi(\bh(k))\dd k.
\]
Finally, from the symmetries of $\bh(k)$ (see \eqref{eq:d7} below) it is easy to see that $\bh(-k)$ is unitarily equivalent to $\bh(k)$. This allows to reduce integration over $(-\pi/\tau,\pi/\tau)$ in the right-hand side to the integration over $(0,\pi/\tau)$, and we obtain \eqref{eq:d5}. 
\end{proof}

\subsection{The band functions}

Let $\{E_n(k)\}$ be some enumeration of the eigenvalues of $\bh(k)$. Then \eqref{eq:d5} can be written, somewhat informally,  as
\[
\int_{-\infty}^\infty \varphi(\lambda)\dd\nu(\lambda)
=
\frac1{\pi}
\int_{0}^{\pi/\tau}\sum_n\varphi(E_n(k))\dd k.
\]
Heuristically we can think of this as 
\begin{equation}
\nu=\frac1{\tau}\sum_n m\circ E_n^{-1},
\label{eq:d-n3}
\end{equation}
where $m$ is the normalised Lebesgue measure on $(0,\pi/\tau)$. 
A precise interpretation of \eqref{eq:d-n3} requires some analysis of eigenvalue branches $\{E_n(k)\}$. Indeed, without such analysis it is not even clear if the number of elements in the set $\{E_n(k)\}$ is the same for different $k$. Here we rely on the results of \cite{PuSobolev}. 

\begin{proposition}\cite[Theorem~1.4]{PuSobolev}
\label{thm.branches}
There is a finite or countable list $\mathcal L$ of non-vanishing real-valued real analytic functions $E = E(k)$ on $\bbR$ (we call them \emph{band functions}) that represent all non-zero eigenvalues of $\bh(k)$ in the following sense:
\begin{itemize}
\item
for each $k\in(0,\pi/\tau)$ and each eigenvalue $E_*\not = 0$ of $\bh(k)$  of multiplicity $m\geq1$ there are exactly $m$ functions $E_1, E_2, \dots, E_m \in \mathcal L$ (not necessarily distinct, i.e. a function can be repeated on the list $\mathcal L$ according to multiplicity) such that $E_* = E_1(k) = E_2(k) = \dots = E_m(k)$; 
\item
conversely, for each $k\in(0,\pi/\tau)$ and each $E\in\mathcal L$ there is an eigenvalue of $\bh(k)$ that coincides with $E(k)$. 
\end{itemize}
\end{proposition}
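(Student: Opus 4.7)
The plan is to treat $\{\bh(k)\}_{k\in\bbR}$ as a self-adjoint analytic family of trace-class operators on $\ell^2(\bbZ)$ in Kato's sense, and then to extract the list $\mathcal{L}$ from the Rellich--Kato perturbation theory for such families.

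First I would verify the analyticity. From the Floquet decomposition \eqref{eq:d3} together with the pseudodifferential picture \eqref{eq:a5}, the matrix entries of $\bh(k)$ in the canonical basis of $\ell^2(\bbZ)$ are explicit expressions involving the Fourier coefficients $\widetilde{P}_{n-m}$ of $P$ (see \eqref{eq:d6}) and factors of the form $\Gamma(\tfrac{1}{2}-\ii(k+\tfrac{2\pi}{\tau}n))$, which are entire in $k$. The summability hypothesis \eqref{eq:d4} then delivers trace-class norm convergence, showing that $k\mapsto\bh(k)$ extends to a holomorphic map of a complex strip around $\bbR$ into the trace class and is self-adjoint on the real axis; this is essentially what is encoded in Lemma~2.6 of \cite{PuSobolev}.

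Next I would apply the standard Rellich--Kato result for self-adjoint analytic families of compact operators: around any $k_0\in\bbR$ and any nonzero eigenvalue $E_*\ne 0$ of $\bh(k_0)$ of multiplicity $m$, the eigenvalues of $\bh(k)$ in a small disk around $E_*$ are given, counting multiplicity, by $m$ real-analytic germs $E_1,\dots,E_m$ satisfying $E_j(k_0)=E_*$. Since $\bbR$ is simply connected, each germ admits by the monodromy theorem a single-valued maximal analytic continuation along $\bbR$, and this continuation remains a genuine eigenvalue of $\bh(k)$ as long as it stays away from zero.

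The main obstacle is ensuring that the branches one retains are non-vanishing on all of $\bbR$, as required by the statement. An analytically continued branch can in principle attain the value zero at isolated points, where it ceases to be a bona fide eigenvalue because zero lies in the essential spectrum of the compact operator $\bh(k)$. The plan is to include in $\mathcal{L}$ only those globally defined analytic branches that are nowhere zero on $\bbR$, and then to verify the completeness statement by a Cauchy-integral dimension count: for each fixed $k\in(0,\pi/\tau)$ and each nonzero $E_*\in\sigma(\bh(k))$ of multiplicity $m$, a contour integral of the resolvent $(\bh(k)-z)^{-1}$ around $E_*$ recovers the corresponding spectral projection, and matching its rank with the number of branches in $\mathcal{L}$ passing through $E_*$ at $k$ forces this number to equal $m$. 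This verification is the technical core and is carried out in \cite{PuSobolev}, where the specific structure of smooth periodic Hankel operators is used to rule out the pathological scenarios in which accidental zeros of the branches would spoil the enumeration on $(0,\pi/\tau)$.
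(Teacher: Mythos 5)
The paper does not prove this proposition: it is imported verbatim as \cite[Theorem~1.4]{PuSobolev}, so there is no in-paper argument to compare against. Judged as a free-standing proof, your sketch identifies the right general machinery (Rellich--Kato theory for a self-adjoint, trace-class-valued analytic family $k\mapsto\bh(k)$, whose analyticity does follow from \eqref{eq:d7} and the summability condition \eqref{eq:d4}), but it has a genuine gap at exactly the point where the general machinery stops and the Hankel-specific structure must enter.

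The gap is the non-vanishing of the branches, and your proposed workaround makes the argument internally inconsistent rather than fixing it. You propose to build $\mathcal L$ by keeping only those globally continued analytic branches that are nowhere zero on $\bbR$, and then to verify completeness by a contour-integral rank count. But if some analytic branch $E$ were a genuine nonzero eigenvalue for most $k$ and vanished at a single point $k_0$, your filter would discard it entirely, and the rank count at any $k\neq k_0$ would then come out \emph{short} of the multiplicity $m$ --- the dimension count detects the failure, it does not repair it. So the construction only works if one first proves that \emph{no} analytic eigenvalue branch of $\bh(k)$ vanishes at any real $k$; this is a structural theorem about the fiber operators (tied to the factorisation \eqref{eq:d8} and the dichotomy between the trivial and infinite-dimensional kernel of $\bh(k)$), not a consequence of Kato--Rellich theory, and for a generic analytic family of compact self-adjoint operators it is simply false (e.g.\ $\bh(k)=kA$ with $A$ a rank-one projection). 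Deferring precisely this step to \cite{PuSobolev} means the proposal reproduces the paper's citation rather than supplying a proof. A secondary, smaller issue: with infinitely many eigenvalues accumulating at $0$, the global organisation of the local Rellich germs into a single countable list of functions analytic on all of $\bbR$ (in particular, their continuation through the symmetry points $k=0$ and $k=\pi/\tau$ where crossings occur) also needs the quantitative control from \cite{PuSobolev}, not just the monodromy theorem.
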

We emphasize that $\mathcal L$ is a \emph{list} rather than a \emph{set}, i.e. the same function can be listed several times in $\mathcal L$, which corresponds to multiplicities of eigenvalues. We write the list $\mathcal L$ as the disjoint union 
\[
\mathcal L={\mathcal L}_{\text{non-flat}}\cup {\mathcal L}_{\text{flat}}
\]
where ${\mathcal L}_{\text{flat}}$ is the set of all band functions that are identically constant (we call them \emph{flat band functions}) and ${\mathcal L}_{\text{non-flat}}$ are the rest of the band functions (we call them \emph{non-flat band functions}). For the flat bands, we have 

\begin{proposition}\cite[Lemma~6.1]{PuSobolev}\label{prp:d-n7}
If $E\in {\mathcal L}_{\text{\rm flat}}$ is a flat band function, then $-E$ is also in ${\mathcal L}_{\text{\rm flat}}$, with the same multiplicity.
\end{proposition}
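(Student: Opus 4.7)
The goal is to show that flat band functions come in $\pm$ pairs with matching multiplicities, and the plan is to exploit a fiber-level sign-flipping symmetry inherited from the Hankel structure. First I translate the statement into fiber operator language: by Proposition~\ref{thm.branches} together with the real-analyticity of the band functions, a flat band $E \in \mathcal{L}_{\text{flat}}$ with constant value $E_* \ne 0$ and multiplicity $m$ in $\mathcal{L}_{\text{flat}}$ is equivalent to
\[
\dim \ker(\bh(k) - E_* I) \geq m \quad \text{for every } k \in (0,\pi/\tau),
\]
with equality off a discrete exceptional set of band crossings. The target then becomes the same assertion with $E_*$ replaced by $-E_*$.

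The technical core is to produce a sign-flipping fiber equivalence from the Hankel structure. Expanding the kernel $\bH(x,y) = \phi(e^x+e^y)e^{(x+y)/2}$ in a Fourier series in the diagonal variable $(x+y)/2$ (legal because $\bH(x+\tau,y+\tau) = \bH(x,y)$) shows that the matrix entries $[\bh(k)]_{j,m}$ depend on $j-m$ (a Toeplitz-type index) and on $j+m$ (a Hankel-type index) in a coupled way; this is the algebraic fingerprint of the Hankel structure at the fiber level and should be the source of the sign-flipping symmetry, presumably among those listed in formula~\eqref{eq:d7}. I would look for a unitary $V$ on $\ell^2(\bbZ)$ and a bijection $\sigma$ of the dual cell such that
\[
V \bh(k) V^{-1} = -\bh(\sigma(k))
\]
holds on the closed subspace spanned by the eigenspaces of infinite multiplicity of $\bH$. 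The restriction to this subspace is unavoidable, since for a positive Hankel operator all fibers are nonnegative and no global equivalence of this type can exist. With $V$ and $\sigma$ in hand, $V$ sends $\ker(\bh(k) - E_* I)$ isomorphically onto $\ker(\bh(\sigma(k)) + E_* I)$, and combined with the reverse argument this yields equality of flat-band multiplicities at $\pm E_*$.

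The main obstacle is the construction of the sign-flipping unitary $V$ and the verification that the flat-band subspace is $V$-invariant. The Megretskii--Peller--Treil theorem (condition~(iv) of Theorem~\ref{thm.MPT}) already forces $-E_*$ to be an eigenvalue of $\bH$ of infinite multiplicity whenever $E_*$ is, but this is blind to the flat-band multiplicity $m$: the spectral multiplicity function of $\bH$ at a point-spectrum atom is $\aleph_0$, independent of the finer invariant $m$. Promoting spectral multiplicity to flat-band multiplicity forces one to descend to the fiber level, and pinning down $V$ demands the detailed matrix structure of $\bh(k)$ derived in \cite{PuSobolev}. A conceivable alternative is to apply an MPT-type multiplicity constraint to each fiber $\bh(k)$ separately, inherited from the Hankel structure of $\bH$; but making that route rigorous appears to require, in practice, essentially the same fiber symmetry.
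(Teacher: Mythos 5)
There is no argument to match here: the paper does not prove this proposition but imports it verbatim from \cite[Lemma~6.1]{PuSobolev}, so your attempt has to be judged on its own merits. On those merits it has a genuine gap at its core. You correctly diagnose the problem — condition (iv) of Theorem~\ref{thm.MPT} applied to $\bH$ shows that $-E_*$ is again an eigenvalue of infinite multiplicity, hence a flat band, but says nothing about the fiber multiplicity $m=\dim\ker(\bh(k)-E_*)$ — and you correctly observe that any sign-flipping symmetry must be confined to the infinite-multiplicity eigenspaces, since for positive operators the fibers are nonnegative. But the object that would close the gap, the unitary $V$ and bijection $\sigma$ with $V\bh(k)V^{-1}=-\bh(\sigma(k))$ on the flat-band subspace, is never constructed; you only say you ``would look for'' it. A $k$-dependent family of unitaries carrying $\ker(\bh(k)-E_*)$ onto $\ker(\bh(\sigma(k))+E_*)$ \emph{is} the statement to be proved, so positing it is a restatement of the conclusion, not a proof.

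Moreover, the mechanism you propose as the source of this symmetry is not present in the form you describe. By \eqref{eq:d7}--\eqref{eq:d8} the fiber matrix factors as $[\bh(k)]_{n,m}=\overline{\gamma_n(k)}\,\widetilde\Sigma_{n-m}\,\gamma_m(k)$, i.e.\ diagonal $\times$ Laurent (convolution) $\times$ diagonal; the entries depend on $n-m$ and on $n$, $m$ separately through the diagonal factors, but there is no ``Hankel-type index $n+m$'' coupling to exploit. Your fallback — an MPT-type constraint applied fiberwise — also cannot work as stated: the fibers $\bh(k)$ are not Hankel operators in any evident sense, and even a bound of the form $\abs{m_+(k)-m_-(k)}\leq1$ would not yield the claimed \emph{equality} of multiplicities. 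What is actually needed (and what the cited proof supplies via the detailed analysis of the eigenspaces of infinite multiplicity of $\bH$ and their interaction with the commutation relation defining Hankel operators) is a concrete intertwining between $\ker(\bH-E_*)$ and $\ker(\bH+E_*)$ compatible with the $U_\tau$-action, so that the spectral multiplicity of $U_\tau$ restricted to the two eigenspaces — which is exactly the flat-band multiplicity — is preserved. As it stands, your text is a correct reduction of the lemma to an equivalent unproved claim.
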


Next we turn to non-flat band functions. We recall that $\sigma_n$ is our notation for the range of the band function $E_n$. 

\begin{proposition}\cite[Theorem~6.3]{PuSobolev}
\label{prp:d-n8}
\begin{enumerate}[\rm (i)]
\item
if $E_n$ is a non-flat band function, then it appears on the list ${\mathcal L}_{\text{\rm non-flat}}$ exactly once. In other words, all non-flat bands have multiplicity one. 
\item
If $E_n$ is a non-flat band function, then $E_n'(k)\not=0$ for $k\in(0,\pi/\tau)$.
\item
If $\sigma_n^{\rm int}=\{E_n(k): 0<k<\pi/\tau\}$, then for any $n\not=m$, we have $\sigma_n^{\rm int}\cap\sigma_m^{\rm int}=\varnothing$. In other words, the interior parts of non-flat bands are disjoint. 
\item 
If $\sigma_n$ and $\sigma_m$ are two non-flat bands, then $(-\sigma_n)\cap\sigma_m=\varnothing$.
\end{enumerate}
\end{proposition}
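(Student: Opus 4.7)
The plan is to reduce all four claims to the Megretskii--Peller--Treil multiplicity constraints of Theorem \ref{thm.MPT}, by relating the band functions in $\mathcal{L}_{\text{non-flat}}$ to the spectral multiplicity function $m(\lambda)$ of the a.c.\ part of $\bH$. The starting point is the direct-integral formula
\begin{equation*}
m(\lambda) = \sum_{E_n \in \mathcal{L}_{\text{non-flat}}} \#\bigl\{k \in (0,\pi/\tau): E_n(k)=\lambda,\; E_n'(k)\neq 0\bigr\}
\end{equation*}
valid for a.e.\ $\lambda$, where each band function is counted with its multiplicity of appearance in $\mathcal{L}$. I would derive this from a standard direct integral analysis of the decomposition \eqref{eq:d3} together with Kato--Rellich analyticity of the eigenvalue branches of $\bh(k)$.

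For part (ii), I would suppose $E_n'(k_0)=0$ at some interior $k_0 \in (0,\pi/\tau)$ for a non-flat $E_n$, with analytic expansion $E_n(k)-E_n(k_0)=c(k-k_0)^p + O((k-k_0)^{p+1})$, where $p\geq 2$ and $c\neq 0$. Such a critical point contributes a jump of an even positive integer to the preimage count as $\lambda$ crosses $E_n(k_0)$. To convert this into a contradiction with the MPT bound $|m(\lambda)-m(-\lambda)|\leq 2$, one traces how the jump propagates to $m(\lambda)$ and $m(-\lambda)$; the key is that it is unmatched on the $-\lambda$ side unless the Hankel-symmetric structure produces a mirror jump at $-E_n(k_0)$, in which case the argument can be iterated to an actual violation of the MPT budget. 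Parts (i), (iii), (iv) are then proved by analogous bookkeeping: a repeated appearance of $E_n$ in $\mathcal{L}_{\text{non-flat}}$ (failure of (i)), an overlap of $\sigma_n^{\rm int}$ with $\sigma_m^{\rm int}$ (failure of (iii)), or an overlap of $-\sigma_n$ with $\sigma_m$ (failure of (iv)) each produces extra contributions to $m$ that, via the MPT bound, force other cases to fail and ultimately contradict Theorem \ref{thm.MPT}.

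The central technical ingredient --- and the main obstacle --- is an intrinsic Hankel anti-symmetry at the fiber level: a natural anti-unitary $\mathcal{J}$ on $\ell^2(\bbZ)$ with $\mathcal{J}\bh(k)\mathcal{J}^{-1}=-\bh(k')$ for a suitable $k'$, pairing each non-flat band $E_n(k)$ with a partner $-E_n(k')$. This is what allows one to translate the MPT constraint on the difference $m(\lambda)-m(-\lambda)$ into a constraint on same-side and opposite-side band overlaps. Constructing this anti-symmetry explicitly from the Hankel matrix representation, and verifying its compatibility with the Floquet decomposition \eqref{eq:d3}, is where the bulk of the work lies; without it, Theorem \ref{thm.MPT} alone is not strong enough to force multiplicity one and to rule out the overlaps prohibited in (iii), (iv).
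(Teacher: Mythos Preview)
The paper does not prove this proposition; it is quoted verbatim from \cite[Theorem~6.3]{PuSobolev}. So the comparison is between your sketch and the argument in that reference, which the present paper does not reproduce.

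Your proposal has a genuine gap, and it is precisely in the ``central technical ingredient'' you flag at the end. An anti-unitary $\mathcal J$ with $\mathcal J\bh(k)\mathcal J^{-1}=-\bh(k')$ cannot exist in general: take $\bH$ positive (say the single-band example of Section~\ref{sec:d5}). Then $\bh(k)\geq 0$ for every $k$, so $-\bh(k')\leq 0$, and the intertwining would force $\bh(k)=0$. More to the point, the very conclusion you are trying to prove, part~(iv), says that a non-flat band $E_n$ never has $-E_n$ as another non-flat band; this is exactly the opposite of what your anti-symmetry would produce. So the mechanism you are hoping will make the MPT budget bite is incompatible with the statement itself.

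What actually makes the MPT constraint effective here is a different symmetry: the relation $\bh(-k)\simeq \bh(k)$ noted after \eqref{eq:d7}. Because of it, each non-flat band on $(0,\pi/\tau)$ is matched by its mirror on $(-\pi/\tau,0)$, so every preimage point on $(0,\pi/\tau)$ already contributes multiplicity $2$ to the a.c.\ part of $\bH$ over the full cell $(-\pi/\tau,\pi/\tau)$. Once (iv) is known, $m(-\lambda)=0$ on the a.c.\ spectrum and MPT gives $m(\lambda)\leq 2$; since each hit costs $2$, this forces (i), (ii), (iii) simultaneously. The hard part is therefore (iv), and it is not obtained from a $\lambda\leftrightarrow-\lambda$ fiber symmetry but from a separate Hankel-specific argument in \cite{PuSobolev} exploiting the explicit structure \eqref{eq:d7}--\eqref{eq:d8} of $\bh(k)$. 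Your outline gets the logical architecture roughly right (MPT plus a Hankel input), but misidentifies what that input is; the anti-symmetry you propose is a dead end.
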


Parts (i)--(iii) of this proposition show that we can enumerate the non-flat band functions consistently on the interval $(0,\pi/\tau)$. For example, we can write ${\mathcal L}_{\text{non-flat}}$ as a disjoint union of two lists,
\[
{\mathcal L}_{\text{non-flat}}=\{E_n^+\}\cup\{E_n^-\},
\]
where $\{E_n^+\}$ are positive band functions and $\{E_n^-\}$ are negative band functions. Each of the two lists may be empty, finite or countable. In each of the two lists, we can enumerate the band functions monotonically so that 
\[
E_n^+(k)>E_{n+1}^+(k) \quad\text{ and }\quad E_n^-(k)<E_{n+1}^-(k)
\]
for all indices $n$ and all $k\in(0,\pi/\tau)$. 
The important aspect here is that the non-flat band functions do not intersect each other on $(0,\pi/\tau)$.

We can now give a precise meaning to formula \eqref{eq:d-n3}, which allows us to complete the proof of Theorem~\ref{thm:d-n1}. 

\subsection{Proof of Theorem~\ref{thm:d-n1}}
We write formula \eqref{eq:d-n3} with $\{E_n\}$ being the band functions of Proposition~\ref{thm.branches}. The right-hand side of \eqref{eq:d-n3} splits into the sum over flat band functions and over non-flat band functions. We first note that since $\bh(k)$ is compact, the band functions (whether flat or non-flat) can only accumulate to zero. By the reflection symmetry expressed by Proposition~\ref{prp:d-n7}, the flat band functions produce the sum \eqref{eq:d-n1}. 

By Proposition~\ref{prp:d-n8}(ii), each of the non-flat band functions produces an a.c. measure supported on a closed interval. By Proposition~\ref{thm.branches}, all band functions are non-vanishing, and therefore their ranges do not contain zero. Finally, Proposition~\ref{prp:d-n8}(i), (iii) and (iv) ensures that the band functions satisfy the required intersection properties. The proof of Theorem~\ref{thm:d-n1} is complete. \qed

\section{Finite band periodic Hankel operators}
\label{sec.dd}

\subsection{The structure of the fiber operators $\bh(k)$}
In this section, we give two concrete simple examples of periodic Hankel operators with flat and non-flat bands. In order to do this, we need to explain the structure of the fiber operators $\bh(k)$ in the Floquet--Bloch decomposition \eqref{eq:d3}. Let $\widetilde P_n$ be the Fourier coefficients of $P$ defined in \eqref{eq:d6}. It was proved in \cite{PuSobolev} that the operator $\bh(k)$ in $\ell^2(\bbZ)$ can be identified with the matrix 
\begin{equation}
[\bh(k)]_{n,m}=
B\big(\tfrac12-i\tfrac{2\pi}{\tau} n-ik, \tfrac12+i\tfrac{2\pi}{\tau} m+ik\big)\, \wt P_{n-m}, \quad  n,m\in\bbZ,
\label{eq:d7}
\end{equation}
where $B$ is the Beta function. Using the identity
\[
B(a,b) = \frac{\Gamma(a) \Gamma(b)}{\Gamma(a+b)},
\]
representation \eqref{eq:d7} can be rewritten as follows:
\begin{equation}
\label{eq:d8}
[\bh(k)]_{n,m} = \overline{\gamma_n(k)} \widetilde{\Sigma}_{n-m}\gamma_m(k), 
\end{equation}
where
\begin{align}\label{eq:d9}
\wt{\Sigma}_n = \frac{\wt P_n}{\Gamma(1-\ii\tfrac{2\pi}{\tau} n)}, 
\quad
\gamma_n(k)  = \Gamma\big(\tfrac12 + \ii(\tfrac{2\pi}{\tau} n + k)\big). 
\end{align}
Notation $\wt{\Sigma}_n$ is motivated as follows. Suppose our periodic Hankel operator $\bH$ is positive, and let $\Sigma$ be the measure in the representation \eqref{eq:a4}. Periodicity of $\bH$ means that $\Sigma$ is $\tau$-periodic, and a simple computation shows that $\wt{\Sigma}_n$, defined above in \eqref{eq:d9}, are exactly the Fourier coefficients of $\Sigma$. 

The utility of \eqref{eq:d8} comes from the fact that, at least formally, 
it represents $\bh(k)$ as the product of three operators in $\ell^2(\bbZ)$: multiplication by 
$\gamma_n(k)$, convolution with the sequence $\widetilde{\Sigma}_n$ 
and again multiplication by $\overline{\gamma_n(k)}$. 

\subsection{A single band operator}\label{sec:d5}
Let us consider the particular case when $\Sigma$ is a single point mass periodically continued to the real line with the period $\tau$. More precisely, we set 
\[
\Sigma=\sum_{n=-\infty}^\infty \delta_{\tau n},
\]
where $\delta_x$ is a point mass at $x$. 
Then $\wt{\Sigma}_n=\frac1\tau$ for all $n\in\bbZ$ and 
\[
[\bh(k)]_{n,m} =\frac1\tau
\overline{\gamma_n(k)} \gamma_m(k),
\]
where $\gamma_n(k)$ is as in \eqref{eq:d9}. Clearly, this is a rank one operator. The non-zero eigenvalue $E_0(k)$ of this rank one operator is easy to compute:
\begin{equation}
E_0(k)
=\frac1\tau\sum_{n=-\infty}^\infty\abs{\gamma_n(k)}^2
=\frac1\tau\sum_{n=-\infty}^\infty\pi\sech\pi(\tfrac{2\pi}{\tau} n+k).
\label{eq:d10}
\end{equation}
In fact, up to an appropriate scaling,  $E_0(k)$ coincides with the standard Jacobi elliptic function ${\rm dn}(k)$ (see Appendix~\ref{sec:A3}). The function $E_0(k)$ is even, periodic with period $\tfrac{2\pi}{\tau}$, strictly positive and real-analytic on the real line and extends to the complex plane as a meromorphic function. It is strictly decreasing on $(0,\tfrac{\pi}{\tau})$ with $E_0(k)>0$, with a non-degenerate maximum at $0$ and a non-degenerate minimum at $\tfrac{\pi}{\tau}$. 

Let us come back to the IDS measure $\nu$ in this case. Denote 
\begin{equation}
E_{\max}=E_0(0)=
\frac1\tau\sum_{n=-\infty}^\infty\pi\sech\pi(\tfrac{2\pi}{\tau} n)
\label{eq:d12}
\end{equation}
and 
\begin{equation}
E_{\min}=E_0(\tfrac{\pi}{\tau})=
\frac1\tau\sum_{n=-\infty}^\infty \pi\sech\pi(\tfrac{2\pi}{\tau} (n+\tfrac12)).
\label{eq:d13}
\end{equation}
We have 
\[
0<E_{\min}<E_{\max}.
\]
The measure $\nu$ is absolutely continuous, supported on the interval $[E_{\min},E_{\max}]$ and can be written as
\[
\nu=\frac1\tau m\circ E_0^{-1},
\]
where $m$ is the normalised Lebesgue measure on $(0,\pi/\tau)$. Since the extrema of $E_0$ are non-degenerate, the density of $\nu$ has square-root singularities near $E_{\min}$ and $E_{\max}$.

\subsection{Flat bands}\label{sec:d6}
Let us consider a modification of the previous example, where $\Sigma$ is the signed measure
\[
\Sigma
=\sum_{n=-\infty}^\infty\left( \delta_{\tau n} - \delta_{\frac{\tau}{2}+\tau n}\right).
\]
Then the Fourier coefficients of $\Sigma$ are 
\[
\wt{\Sigma}_n=\frac{1-(-1)^n}{\tau}
\]
and the formula for the fiber operator $\bh(k)$ becomes
\[
[\bh(k)]_{n,m} =\frac1\tau\bigl(\overline{\gamma_n(k)}\gamma_m(k)-(-1)^n\overline{\gamma_n(k)}(-1)^m\gamma_m(k)\bigr).
\]
Thus, $\bh(k)$ is a rank two operator. An elementary calculation shows that the eigenvalues of $\bh(k)$ are $\pm E_*(k)$, where $E_*(k)>0$ satisfies
\begin{equation}
\tau^2E_*(k)^2=
\left(\sum_{n=-\infty}^\infty \sech\pi(\tfrac{2\pi}{\tau} n+k)\right)^2
-
\left(\sum_{n=-\infty}^\infty (-1)^n\sech\pi(\tfrac{2\pi}{\tau} n+k)\right)^2.
\label{eq:d14}
\end{equation}
By an identity for Jacobi elliptic functions (see Lemma~\ref{lma:b3} in Appendix~\ref{sec:A3}), the right-hand side here is constant, $E_*(k)=E_*>0$. 
Thus, both spectral bands are flat. The spectrum of $\bH$ consists of three eigenvalues of infinite multiplicity: $-E_*,0,E_*$. In particular, the density of states measure is pure point, supported at the points $-E_*$ and $E_*$. (By our convention, the eigenvalue $0$ is ``invisible'' to the measure $\nu$.)

The study of flat bands is now an active branch of solid state theory, see e.g. \cite{Le:24} and references therein.

\subsection{The kernels of positive periodic Hankel operators}
\label{sec:d7}
Let $\bH$ be a positive $\tau$-periodic Hankel operator in $L^2(\bbR)$, and let $\Sigma$ be the measure in the representation \eqref{eq:a4}. As already discussed, periodicity of $\bH$ means that $\Sigma$ is $\tau$-periodic.

We have two possible cases:

Case (i): $\Sigma$ on the period $[0,\tau)$ is a finite linear combination of $N<\infty$ point masses. In this case, by Proposition~\ref{prop:b8}(ii), the kernel of $\bH$ is infinite-dimensional. Furthermore, it is easy to see that the convolution with the sequence $\widetilde\Sigma_n$ is a rank $N$ operator. By \eqref{eq:d8} it follows that the rank of the fiber operator $\bh(k)$ is $N$, so there are exactly $N$ band functions and therefore $\nu(\bbR_+)=N/\tau$. This agrees with Theorem~\ref{thm:D1}. 

To summarise: in this case $\Ker \bH$ is infinite-dimensional and $\nu(\bbR_+)<\infty$. 

Case (ii): $\Sigma$ on the period $[0,\tau)$ is \emph{not} a finite linear combination of point masses. Then, by Proposition~\ref{prop:b8}(ii), the kernel of $\bH$ is trivial. 
On the other hand, it is not difficult to see that the fiber operator $\bh(k)$ in this case has infinite rank, so there are infinitely many band functions (accumulating at $\lambda=0$) and therefore $\nu(\bbR_+)=\infty$. Again, this agrees with Theorem~\ref{thm:D1}.

To summarise: in this case $\Ker \bH$ is trivial and $\nu(\bbR_+)=\infty$.

We can summarise this discussion as follows: in the case of positive periodic Hankel operators, in the formula 
\[
\nu(\bbR_+)+\dim\Ker\bH_\omega=\infty
\]
(see \eqref{eq:xx1}), exactly one of the two terms in the left-hand side is infinite.

\section{The random Kronig--Penney--Hankel model}
\label{sec.g} 

\subsection{Preliminaries}
We return to the rKPH model, introduced in Example~\ref{ex:b3}:
\[
\bH_\omega=\sum_{n\in\bbZ}\varkappa_\omega(n) \jap{\cdot,\psi_n}\psi_n
\quad \text{ in $L^2(\bbR)$,}
\]
where $\psi_n(\xi)=\beta(\xi-\tau n)$ with $\beta$ defined in \eqref{eq:a4} and $\{\varkappa_\omega(n)\}_{n\in\bbZ}$ are i.i.d. random variables with the common distribution $\bbP_0$. Throughout this section, we assume that $\supp\bbP_0$ is a compact set on the positive half-line separated away from the origin, and denote 
\begin{equation}
0<\varkappa_{\min}=\inf(\supp\bbP_0)
\quad \text{ and }\quad
\varkappa_{\max}=\sup(\supp\bbP_0)<\infty.
\label{eq:varkappa}
\end{equation}

The vectors $\{\psi_n\}_{n\in\bbZ}$ are normalised by $\norm{\psi_n}^2=1/2$, but they are neither orthogonal nor complete in $L^2(\bbR_+)$. Sums of rank one projections with random coefficients of this type have been studied in our related work \cite{Pa-Pu} in a more abstract setting, and we are going to use the results of \cite{Pa-Pu} here. 

Let us compute the Gram matrix of the vectors $\{\psi_n\}_{n\in\bbZ}$: 
\begin{align}
\jap{\psi_n,\psi_m}
=
\int_{-\infty}^\infty \beta(\xi-\tau n)\beta(\xi-\tau m)\dd \xi=
\frac12\sech\frac{\tau}{2}(n-m).
\label{eq:g8}
\end{align}
Crucial for us is that the inner product $\jap{\psi_n,\psi_m}$ depends only on $\abs{n-m}$ and decays exponentially fast as $\abs{n-m}\to\infty$. In \cite{Pa-Pu} it is shown that under these assumptions, the spectral properties of $\bH_\omega$ can be expressed in terms of the distribution $\bbP_0$ and the $2\pi$-periodic function $\varphi$ termed the \emph{symbol} in \cite{Pa-Pu}: 
\[
\varphi(k)
=\sum_{n=-\infty}^\infty \jap{\psi_n,\psi_0}\ee^{\ii nk}
=\frac12 \sum_{n=-\infty}^\infty\sech\frac{\tau n}{2}\ee^{\ii nk},
\]
$k\in(-\pi,\pi)$. By an application of the Poisson summation formula, we can rewrite $\varphi$ as
\[
\varphi(k)
=
\frac{\pi}{\tau}\sum_{m=-\infty}^\infty \sech\tfrac{\pi}{\tau}(k-2\pi m).
\]
Recalling formula \eqref{eq:d10} for the band function $E_0$ of a single-band periodic Hankel operator, we recognise the above function as 
\[
\varphi(k)=E_0(k/\tau). 
\]

\subsection{The location of spectrum}
We recall the notation $E_{\min}$ and $E_{\max}$ from \eqref{eq:d12}, \eqref{eq:d13}; the range of values of the symbol $\varphi$ is $[E_{\min},E_{\max}]$. 
\begin{theorem}
The deterministic spectrum of the rKPH model $\bH_\omega$ is
\[
\sigma(\bH_\omega)=\supp\bbP_0\cdot[E_{\min},E_{\max}],
\]
where the right-hand side stands for the product set, 
\[
\supp\bbP_0\cdot[E_{\min},E_{\max}]
=
\{ab: a\in \supp\bbP_0, \, b\in [E_{\min},E_{\max}]\}.
\]
\end{theorem}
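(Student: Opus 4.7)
The plan is to deduce the theorem from the abstract framework developed in our companion paper \cite{Pa-Pu}. There, operators of the form
\[
A_\omega = \sum_{n\in\bbZ} \varkappa_\omega(n) \jap{\cdot,\psi_n}\psi_n
\]
with i.i.d.\ coefficients $\{\varkappa_\omega(n)\}$ and vectors $\{\psi_n\}$ whose Gram matrix $\jap{\psi_n,\psi_m}$ depends only on $n-m$ and decays sufficiently fast are shown to have deterministic spectrum of the form $\supp\bbP_0 \cdot \mathrm{Range}(\varphi)$, where $\varphi$ is the Fourier symbol of the sequence $n\mapsto\jap{\psi_n,\psi_0}$.

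First I would verify that the rKPH model fits into this framework. Formula \eqref{eq:g8} shows that $\jap{\psi_n,\psi_m} = \tfrac12\sech\tfrac{\tau}{2}(n-m)$, which is translation invariant and decays exponentially fast, while hypothesis \eqref{eq:varkappa} on $\bbP_0$ directly matches the boundedness assumption of \cite{Pa-Pu}. Applying the general theorem then gives $\sigma(\bH_\omega) = \supp\bbP_0 \cdot \mathrm{Range}(\varphi)$ almost surely.

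Second, I would identify the range of the symbol. The computation in the preamble of this section shows that $\varphi(k) = E_0(k/\tau)$, where $E_0$ is the band function of the single-band positive periodic Hankel operator discussed in Section~\ref{sec:d5}. By the results established there, $E_0$ is even and real-analytic on $\bbR$, strictly decreasing on $(0,\pi/\tau)$, with maximum $E_{\max}=E_0(0)$ given by \eqref{eq:d12} and minimum $E_{\min}=E_0(\pi/\tau)$ given by \eqref{eq:d13}. In particular $\mathrm{Range}(\varphi) = [E_{\min},E_{\max}]$, which yields the claimed formula.

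The main obstacle is the abstract result of \cite{Pa-Pu} itself. Its proof follows the familiar strategy of ergodic spectral theory, but adapted to the non-local setting where the rank-one projections overlap. The inclusion $\supset$ is established by a Weyl-sequence construction exploiting ergodicity: on long intervals where $\varkappa_\omega(n)\approx c$, which by the Borel--Cantelli lemma occur almost surely for each $c\in\supp\bbP_0$, the operator $\bH_\omega$ acts approximately as $c$ times the periodic operator of Section~\ref{sec:d5}, whose spectrum is $c\cdot[E_{\min},E_{\max}]$; an approximate eigenfunction of the latter can be localised and translated into such a region. The reverse inclusion requires deterministic norm estimates showing that if $\mu$ is separated from every product $c\lambda$ with $c\in\supp\bbP_0$, $\lambda\in[E_{\min},E_{\max}]$, then $\bH_\omega-\mu$ is uniformly invertible. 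The delicate point is that $\{\psi_n\}$ is not orthonormal, so the standard Floquet-type reduction to a multiplication operator is unavailable and must be replaced by the convolution-symbol machinery of \cite{Pa-Pu}.
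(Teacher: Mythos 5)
Your proposal is correct and follows essentially the same route as the paper, which proves the theorem in one line by citing \cite[Theorem~2.2]{Pa-Pu} and identifying the range of the symbol $\varphi(k)=E_0(k/\tau)$ with $[E_{\min},E_{\max}]$. The additional sketch of how the abstract result of \cite{Pa-Pu} is proved is reasonable but goes beyond what the paper itself supplies, since that proof is delegated entirely to the companion paper.
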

\begin{proof}
This is a direct consequence of  \cite[Theorem~2.2]{Pa-Pu}, which applies to more general symbols $\varphi$.
\end{proof}

In particular, using the notation \eqref{eq:varkappa} and
\[
\sigma_{\min}=\varkappa_{\min}E_{\min}
\quad \text{ and }\quad
\sigma_{\max}=\varkappa_{\max}E_{\max},
\]
we see that 
\[
\sigma(\bH_\omega)\subset [\sigma_{\min},\sigma_{\max}]
\]
and the endpoints $\sigma_{\min}$ and $\sigma_{\max}$ belong to the spectrum. The spectrum may or may not have gaps, depending on the support of $\bbP_0$.

\subsection{The IDS measure and Lifshitz tails}
We first note that the IDS measure $\nu$ of the rKPH model is finite, with 
\[
\nu(\bbR_+)=1/\tau.
\]
This is a direct consequence of Theorem~\ref{thm:D1}, because the density of $\supp\Sigma_\omega$ here is $1/\tau$.

Next,  we can apply \cite[Theorem~2.5]{Pa-Pu} to obtain the following asymptotic formulas for $\nu$ near the upper and lower endpoints of the spectrum $\sigma_{\max}$ and $\sigma_{\min}$, known as Lifshitz tails:
\[
\nu((\sigma_{\max}-\delta,\sigma_{\max}])\sim \ee^{-C\delta^{-1/2}}
\quad\text{ and }\quad
\nu([\sigma_{\min},\sigma_{\min}+\delta))\sim \ee^{-C\delta^{-1/2}}
\]
as $\delta\to0$. More precisely, we have 
\begin{theorem}\cite[Theorem~2.5]{Pa-Pu}
Assume that the distribution $\bbP_0$ is not supported at one point, i.e. $\varkappa_{\min}<\varkappa_{\max}$. Further, assume that $\bbP_0$ satisfies
\begin{equation}
\bbP_0([\varkappa_{\min},\varkappa_{\min}+\eps))\geq C\eps^\ell
\quad\text{ and }\quad
\bbP_0((\varkappa_{\max}-\eps,\varkappa_{\max}])\geq C\eps^\ell
\label{eq:g2}
\end{equation}
for some $\ell>0$.
Then we have the following Lifshitz tails behaviour of the density of states measure $\nu$ of the rKPH model at the top and bottom endpoints of the spectrum:
\begin{align*}
\lim_{\delta\to0_+} \log \bigl(-\log \nu((\sigma_{\max}-\delta,\sigma_{\max}])\bigr)/\log\delta &= -\frac12\, ,
\\
\lim_{\delta\to0_+} \log\bigl(-\log \nu([\sigma_{\min},\sigma_{\min}+\delta))\bigr)/\log\delta &= -\frac12\, .  
\end{align*}
\end{theorem}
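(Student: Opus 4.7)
The plan is to invoke \cite[Theorem~2.5]{Pa-Pu} directly, so the work consists in verifying that the rKPH model fits its abstract setup. First, I would recall that $\bH_\omega=\sum_n \varkappa_\omega(n)\jap{\cdot,\psi_n}\psi_n$ with $\psi_n(\xi)=\beta(\xi-\tau n)$, so by \eqref{eq:g8} the Gram matrix $\jap{\psi_n,\psi_m}=\tfrac12\sech\tfrac{\tau}{2}(n-m)$ depends only on $n-m$ and decays exponentially fast. This is precisely the abstract framework of \cite{Pa-Pu}, with associated $2\pi$-periodic symbol
\[
\varphi(k)=\frac12\sum_{n=-\infty}^\infty \sech\tfrac{\tau n}{2}\,\ee^{\ii nk},
\]
which after Poisson summation equals $E_0(k/\tau)$, the band function of Section~\ref{sec:d5}.

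Next, I would check the non-degeneracy conditions on $\varphi$ required by \cite[Theorem~2.5]{Pa-Pu}. From the analysis in Section~\ref{sec:d5}, $E_0$ is real-analytic, strictly positive, even, $(2\pi/\tau)$-periodic, strictly decreasing on $(0,\pi/\tau)$ with a non-degenerate maximum $E_{\max}$ at $k=0$ and a non-degenerate minimum $E_{\min}$ at $k=\pi/\tau$. Rescaling $k\mapsto k/\tau$, the same holds for $\varphi$ on $(-\pi,\pi)$: the range of $\varphi$ is exactly $[E_{\min},E_{\max}]$, the extrema are attained at isolated points with non-vanishing second derivative, and the analyticity ensures the regularity hypotheses of the abstract theorem are satisfied.

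For the distributional side, condition \eqref{eq:g2} provides the power-type lower bounds on $\bbP_0([\varkappa_{\min},\varkappa_{\min}+\eps))$ and $\bbP_0((\varkappa_{\max}-\eps,\varkappa_{\max}])$ that \cite[Theorem~2.5]{Pa-Pu} requires in order to pin down the logarithmic asymptotics (the exponent $\ell$ affects only the subleading correction, not the $-1/2$ exponent). Combining the symbol data with these distributional bounds, \cite[Theorem~2.5]{Pa-Pu} yields the two required Lifshitz-tail asymptotics at $\sigma_{\max}=\varkappa_{\max}E_{\max}$ and $\sigma_{\min}=\varkappa_{\min}E_{\min}$.

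The main obstacle, and the only substantive point beyond bookkeeping, will be checking the non-degeneracy of the extrema of $\varphi$ cleanly. The quickest route is the explicit identification of $E_0(k)$ with a Jacobi $\dn$ function (Appendix~\ref{sec:A3}), whose known properties give second derivatives of definite sign at $0$ and $\pi/\tau$; alternatively, one can differentiate the rapidly convergent series $\tfrac{\pi}{\tau}\sum_m \sech\tfrac{\pi}{\tau}(k-2\pi m)$ termwise and observe that the dominant term determines the sign of $\varphi''(0)$ and $\varphi''(\pi)$. Once this is in hand, the rest is a direct citation.
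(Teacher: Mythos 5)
Your proposal is correct and follows essentially the same route as the paper: the paper likewise treats this theorem as a direct application of \cite[Theorem~2.5]{Pa-Pu}, having set up the reduction in the preceding subsection by computing the Gram matrix \eqref{eq:g8} and identifying the symbol $\varphi(k)=E_0(k/\tau)$ via Poisson summation. Your additional care in verifying the non-degeneracy of the extrema of $E_0$ (via the Jacobi $\dn$ identification of Appendix~\ref{sec:A3}) is a reasonable elaboration of hypotheses that the paper leaves implicit, but it does not constitute a different method.
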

Assumption \eqref{eq:g2} means that $\bbP_0$ is ``not too thin'' near the endpoints of its support. For example, if $\bbP_0(\{\varkappa_{\max}\})>0$ and $\bbP_0(\{\varkappa_{\min}\})>0$, then \eqref{eq:g2} is satisfied. Assumption \eqref{eq:g2} is needed only for the lower bounds in the Lifshitz tails asymptotics.

\subsection{Wegner type estimate}
We use \cite[Theorem~2.4]{Pa-Pu}, which gives the following statement. 
\begin{theorem}\cite[Theorem~2.4]{Pa-Pu}
Assume that $\bbP_0$ is absolutely continuous with the density $\rho$ uniformly bounded by $\rho_{\max}<\infty$. Then the IDS measure $\nu$ of the rKPH model is absolutely continuous with the density satisfying the Wegner type estimate
\[
\frac{\dd\nu(\lambda)}{\dd\lambda}
\leq
\frac{\rho_{\max}\varkappa_{\max}}{\lambda}, \quad \lambda>0.
\]
\end{theorem}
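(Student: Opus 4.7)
Since this result is stated as an application of Theorem~2.4 in our companion paper \cite{Pa-Pu}, my plan is to reduce the proof to checking that the hypotheses of that theorem are satisfied for the rKPH model and that the IDS measure $\nu$ defined here matches the one used there. The abstract framework of \cite{Pa-Pu} covers random operators of the form $A_\omega = \sum_n \varkappa_\omega(n)\jap{\cdot,\psi_n}\psi_n$ in a Hilbert space, under the assumptions that $\{\varkappa_\omega(n)\}$ are i.i.d.\ with a density bounded by $\rho_{\max}$ and supported in $[\varkappa_{\min},\varkappa_{\max}]$ with $\varkappa_{\min}>0$, and that the Gram matrix $\jap{\psi_n,\psi_m}$ is translation invariant and decays exponentially in $\abs{n-m}$. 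Formula \eqref{eq:g8} shows that for the rKPH model
\[
\jap{\psi_n,\psi_m}=\tfrac{1}{2}\sech\tfrac{\tau(n-m)}{2},
\]
which satisfies both requirements, so the abstract result is applicable.

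The second step is to verify that the IDS measure $\nu$ defined here by \eqref{eq:c39} coincides with the ``per-site'' IDS measure considered in \cite{Pa-Pu}, where a single random variable $\varkappa_\omega(n)$ is attached to each integer index. In our discrete ergodic framework with period $\tau$, the two conventions differ only by a factor of $\tau$: the support of $\Sigma_\omega$ has density $1/\tau$, which is consistent with the identity $\nu(\bbR_+)=1/\tau$ established via Theorem~\ref{thm:D1}. The Wegner bound in \cite[Theorem~2.4]{Pa-Pu} is homogeneous of degree zero in $\tau$ (the normalisation factors cancel between the measures of intervals on the spectral axis and the per-site count), hence transfers to our normalisation without modification.

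With these identifications, the theorem follows directly from \cite[Theorem~2.4]{Pa-Pu}. The substantive content of that theorem — which would be the main obstacle were one to attempt a direct proof — is a spectral-averaging argument adapted to a non-orthogonal system of rank-one perturbations. Fixing $\varkappa_\omega(m)$ for $m\neq n$ reduces the dependence on $\varkappa_\omega(n)$ to a rank-one perturbation of an auxiliary operator, so a Combes--Hislop-type spectral averaging can be carried out; the cross-correlations that appear because $\jap{\psi_n,\psi_m}\neq0$ for $n\neq m$ are controlled by the exponential decay of the Gram matrix, ensuring absolute summability over $n$. The characteristic factor $\varkappa_{\max}/\lambda$ on the right-hand side — unusual among Wegner-type bounds, which are typically constant — reflects the fact that each coupling $\varkappa_\omega(n)$ enters \emph{multiplicatively} on its projector $\jap{\cdot,\psi_n}\psi_n$: performing the spectral average in the variable $\log\varkappa_\omega(n)$, in which a shift produces a multiplicative change of the contributions to $\bH_\omega$, introduces a Jacobian of order $1/\lambda$ at spectral scale $\lambda$, while the boundedness $\varkappa_\omega(n)\leq \varkappa_{\max}$ produces the numerator $\varkappa_{\max}$.
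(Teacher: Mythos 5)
Your proposal matches the paper's treatment: the result is obtained there by direct appeal to \cite[Theorem~2.4]{Pa-Pu}, the only verification being that the rKPH data (i.i.d.\ couplings with density bounded by $\rho_{\max}$ and a translation-invariant, exponentially decaying Gram matrix, see \eqref{eq:g8}) fit that abstract framework. The one normalisation the paper explicitly records is $\norm{\psi_n}^2=1/2$ here versus $\norm{\psi_n}=1$ in \cite{Pa-Pu} --- under $\psi_n\mapsto c\,\psi_n$ the operator scales as $\bH_\omega\mapsto c^2\bH_\omega$ and the right-hand side $\rho_{\max}\varkappa_{\max}/\lambda$ is invariant --- whereas you instead discuss homogeneity in $\tau$; this is a minor bookkeeping difference, not a gap.
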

We note that in \cite{Pa-Pu}, we used the normalisation $\norm{\psi_n}=1$, while in this paper we have $\norm{\psi_n}^2=1/2$, but after rescaling the right-hand side of the Wegner estimate does not change.

\subsection{Anderson type localisation}

We note that as $\tau\to\infty$, the inner products $\jap{\psi_n,\psi_m}$ for $n\not=m$ tend to zero, see \eqref{eq:g8}. This asymptotic regime was studied in \cite[Theorem~2.6]{Pa-Pu}. This yields the following result. 

\begin{theorem}
Assume that the distribution $\bbP_0$ is uniformly H\"older continuous on $[\varkappa_{\min},\varkappa_{\max}]$:
\[
\bbP_0([x-\delta,x+\delta])\leq C\delta^\gamma, \quad \forall x\in [\varkappa_{\min},\varkappa_{\max}]
\]
with some $\gamma\in(0,1]$ and $C>0$ independent of $x$. Then there exists $\tau_0>0$ such that for all $\tau\geq\tau_0$, the spectrum of the rKPH model $\bH_\omega$ is pure point almost surely.
\end{theorem}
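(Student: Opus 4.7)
The plan is to invoke the abstract localisation theorem \cite[Theorem~2.6]{Pa-Pu} directly. The rKPH operator fits its framework verbatim: it is an infinite sum of random rank-one projections $\bH_\omega=\sum_n\varkappa_\omega(n)\jap{\cdot,\psi_n}\psi_n$ with i.i.d. coefficients and Gram matrix
\[
\jap{\psi_n,\psi_m}=\tfrac12\sech\tfrac{\tau(n-m)}{2},
\]
which is translation invariant and exponentially off-diagonal, with the decay rate growing linearly in $\tau$. The hypotheses of the abstract theorem are exactly (i) uniform H\"older continuity of $\bbP_0$ on $\supp\bbP_0$, which is the standing assumption of our statement, and (ii) sufficient smallness of off-diagonal Gram entries, which is secured by choosing $\tau\geq\tau_0$ with $\tau_0$ depending only on the H\"older constants $C$ and $\gamma$.

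The mechanism underlying \cite[Theorem~2.6]{Pa-Pu} is the Aizenman--Molchanov fractional moment method, adapted to the rank-one decomposition. Concretely, for some $s\in(0,\gamma)$ one establishes the Green's function bound
\[
\bE\bigl\{\abs{\jap{\psi_n,(\bH_\omega-z)^{-1}\psi_m}}^s\bigr\}
\leq C\ee^{-\mu\abs{n-m}}
\]
uniformly in $z=E+\ii\eta$ with $\eta>0$ and $E$ in a neighbourhood of the deterministic spectrum. The H\"older assumption supplies the single-site decoupling lemma, bounding fractional moments of $(\varkappa_\omega(n)-a)^{-1}$ uniformly in $a\in\bbC$, while the largeness of $\tau$ provides the a priori smallness needed to close the resolvent-identity iteration that propagates the bound. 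Exponential decay of fractional moments of the Green's function is then converted into almost sure pure point spectrum (with exponentially decaying eigenfunctions) via the standard Simon--Wolff / Aizenman--Molchanov spectral criterion.

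The main obstacle, which is the technical heart of \cite{Pa-Pu}, is that the classical fractional moment theory was devised for lattice Schr\"odinger operators, where the unperturbed family $\{\delta_n\}$ is orthonormal. Here the vectors $\{\psi_n\}$ are neither orthogonal nor complete, so one cannot directly appeal to Simon--Wolff rank-one perturbation formulas. In \cite{Pa-Pu} this is handled by passing to the sequence space via the Gram operator $G=[\jap{\psi_m,\psi_n}]_{n,m}$: the resolvent of $\bH_\omega$ on the range of $\{\psi_n\}$ is recast in terms of the inverse of $D_\omega^{-1}+G_0$, where $D_\omega=\mathrm{diag}(\varkappa_\omega(n))$ and $G_0=G-I/2$ has norm $o(1)$ as $\tau\to\infty$. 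One then runs the fractional moment scheme on this reduced operator, where the off-diagonal part is genuinely small and the diagonal is i.i.d. with a H\"older distribution, and transfers the resulting decay back to $\bH_\omega$.
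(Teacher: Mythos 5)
Your proposal takes essentially the same route as the paper: the paper's entire argument consists of observing that the Gram matrix $\jap{\psi_n,\psi_m}=\tfrac12\sech\tfrac{\tau}{2}(n-m)$ is translation invariant with off-diagonal entries tending to zero as $\tau\to\infty$, and then citing \cite[Theorem~2.6]{Pa-Pu} for this asymptotic regime, which is precisely your first paragraph. Your further description of the internal mechanism of that cited theorem goes beyond anything the paper supplies (and cannot be checked against it), but the reduction itself is correct and identical.
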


\section{Conclusion and open questions}

In conclusion, we would like to highlight the key points of our work and raise some questions. 

Ergodic Hankel operators split into two classes: those with trivial kernel and those with infinite-dimensional kernel. Spectral properties of operators in these two classes are quite different. 

Within the class of Hankel operators, positive operators is the most tractable subclass with remarkable properties. This is also the class that displays striking similarities with the theory of the full-line Schr\"odinger operators.

For positive  Hankel operators, we have 
\[
\dim\Ker {\bH}_\omega+\nu(\bbR_+)=\infty. 
\]
In simplest cases (e.g. periodic operators or the rKPH model) exactly one of the two terms on the left-hand side is infinite.  However, there are examples where both terms can be infinite (see Section~\ref{sec:c8}).

For positive operators we have 
\[
\nu(\bbR_+)=D(\supp\Sigma_\omega),
\]
where the $D(\supp\Sigma_\omega)$ is the density of the support of $\Sigma_\omega$, see \eqref{eq:cc6}. 

If $\nu(\bbR_+)=\infty$, the divergence is always due to the singularity at the origin. 
The nature of this singularity invites further analysis; in Theorem~\ref{thm:cc8} we give only the simplest statement in this direction. The singularity of $\nu$ at zero is in some ways analogous to the high-energy asymptotics of the IDS of the full-line Schr\"odinger operator.

The two classes of ergodic Hankel operators where we are able to make considerable progress are periodic Hankel operators and the rKPH model. The study of other classes (such as almost-periodic Hankel operators) presents a promising direction for future work.

In order to keep our exposition as simple as possible, we have only discussed \emph{bounded} Hankel operators in this paper. A theoretical framework exists also for studying unbounded self-adjoint Hankel operators, see e.g. \cite{PuTreil1,Ya4}. Natural examples of unbounded ergodic Hankel operators include the rKPH model where the distribution of $\bbP_0$ is Gaussian.

\appendix

\section{The Carleson condition}

Here we prove that the Carleson condition \eqref{eq:a9} for the measure $\sigma$ is equivalent to the uniform local boundedness condition \eqref{eq:a13} for the measure $\Sigma$. It is clear that \eqref{eq:a13} is equivalent to 
\begin{equation}
\Sigma([n-1,n))\leq C'_\Sigma
\label{eq:A1a}
\end{equation}
with $C'_\Sigma$ independent of $n\in\bbZ$. 

The Carleson condition rewritten in terms of the measure $\Sigma$ reads
\begin{equation}
\int_{-\infty}^A\ee^\xi\dd\Sigma(\xi)\leq C_\sigma e^A, \quad A\in\bbR.
\label{eq:A1}
\end{equation}
Assume \eqref{eq:A1} holds and denote 
$\sigma_n=\Sigma([n-1,n))$. 
We have 
\begin{align*}
\sigma_n
=\int_{[n-1,n)}\dd\Sigma(\xi)
\leq\ee^{-n+1} \int_{[n-1,n)}\ee^\xi\dd\Sigma(\xi)
\leq \ee^{-n+1}\int_{-\infty}^n \ee^\xi\dd\Sigma(\xi)
\leq \ee C_\sigma.
\end{align*}
Conversely, assume \eqref{eq:A1a} and let us prove \eqref{eq:A1}. It is clear that it suffices to prove \eqref{eq:A1} for integer $A$. We have
\begin{align*}
\int_{-\infty}^A \ee^\xi\dd\Sigma(\xi)
= \sum_{n=-\infty}^A \int_{[n-1,n)}\ee^\xi\dd\Sigma(\xi)
\leq \sum_{n=-\infty}^A \ee^n \sigma_n 
\leq C'_\Sigma\sum_{n=-\infty}^A \ee^n
= \frac{C'_\Sigma}{1-\ee^{-1}}\ee^A,
\end{align*}
as required.

\section{An identity for elliptic functions}
\label{sec:A3}

\begin{lemma}\label{lma:b3}
The right-hand side of \eqref{eq:d14} is independent of $k\in\bbR$. 
\end{lemma}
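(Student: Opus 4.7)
The plan is to show that the right-hand side of \eqref{eq:d14}, regarded as a function of complex $k$, is an entire doubly periodic function, hence constant by Liouville's theorem. Write $\alpha = 2\pi/\tau$ and denote the two sums in \eqref{eq:d14} by $A(k)$ and $B(k)$. Splitting according to the parity of $n$ and setting
\[
A_e(k) := \sum_{m\in\bbZ}\sech\pi(2\alpha m+k),
\]
a direct reindexing gives $A(k) = A_e(k) + A_e(k+\alpha)$ and $B(k) = A_e(k) - A_e(k+\alpha)$, so the right-hand side of \eqref{eq:d14} equals
\[
F(k) := A(k)^2 - B(k)^2 = 4\, A_e(k)\,A_e(k+\alpha).
\]

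Next I would extend $F$ meromorphically to $\bbC$ and identify two periods. Since $A_e$ has real period $2\alpha$ and the map $k\mapsto k+\alpha$ \emph{swaps} the two factors of $F$, we obtain real period $\alpha$. For the imaginary period, the elementary identity $\sech\pi(z+\ii) = -\sech\pi z$ yields $A_e(k+\ii) = -A_e(k)$, hence $F(k+\ii) = F(k)$. Thus $F$ is invariant under the lattice $\alpha\bbZ + \ii\bbZ$.

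It remains to check that $F$ is entire. In the fundamental rectangle $[0,\alpha)\times[0,1)$, the only pole of $A_e$ comes from the $m=0$ summand and lies at $k = \ii/2$ (simple, with residue $-\ii/\pi$), while $A_e(k+\alpha)$ is regular there. At this point, using $\sech\pi(x+\ii/2) = -\ii/\sinh\pi x$,
\[
A_e\bigl(\alpha + \tfrac{\ii}{2}\bigr) = -\ii\sum_{m\in\bbZ}\frac{1}{\sinh\pi(2m+1)\alpha},
\]
and this sum vanishes by pairing $m\leftrightarrow -m-1$ and the oddness of $\sinh$. Hence the simple pole of $A_e(k)$ at $k=\ii/2$ is cancelled by the simple zero of $A_e(k+\alpha)$, so $F$ has no poles in the fundamental cell and is therefore entire. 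An entire elliptic function is bounded, so Liouville's theorem forces $F$ to be constant, as required.

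The main obstacle, in my view, is spotting the imaginary period: each factor $A_e$ individually only satisfies $A_e(k+\ii) = -A_e(k)$, so it is essential that the two factors enter symmetrically in $F$ in order to produce genuine invariance rather than a sign change. Once this is seen, the pole-cancellation computation is short. For broader context, one can further identify $A(k)$ with a scalar multiple of $\dn$ and $B(k)$ with a scalar multiple of $\cn$ (same argument, same nome $q = e^{-\pi/\alpha}$, determined by Poisson summation), and then the desired identity becomes the classical Jacobi relation $\dn^2 u - k_J^2\cn^2 u = 1 - k_J^2$; however, the Liouville argument above bypasses the bookkeeping required for this identification.
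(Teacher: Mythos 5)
Your proof is correct, and it takes a genuinely different route from the paper's. The paper applies Poisson summation to both sums, recognises the resulting Fourier series as those of the Jacobi functions $\dn$ and $\cn$ with modulus fixed by $K'/K=\tau/(2\pi)$, and then invokes the classical identity $\dn(u,k)^2-k^2\cn(u,k)^2=(k')^2$. You instead exploit the factorisation $A(k)^2-B(k)^2=4\,A_e(k)\,A_e(k+\alpha)$ and run a Liouville argument directly on this product: invariance under the lattice $\alpha\bbZ+\ii\bbZ$ (the shift by $\alpha$ swaps the factors, the shift by $\ii$ flips both signs), together with cancellation of the unique pole of $A_e(k)$ in the fundamental cell at $k=\ii/2$ by the zero $A_e(\alpha+\ii/2)=-\ii\sum_m 1/\sinh\pi(2m+1)\alpha=0$. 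All of these computations check out (including the residue $-\ii/\pi$ and the verification that $A_e(k+\alpha)$ has no poles in the cell). Your argument is more self-contained --- it reproves the relevant special case of the Jacobi identity from scratch rather than quoting it --- at the price of doing the pole/zero bookkeeping by hand; the paper's route outsources that to classical elliptic function theory and, as a by-product, identifies $E_0(k)$ with $\dn$, a fact the paper also uses in its discussion of the single-band example. The only point worth making explicit in a final write-up is the locally uniform convergence of $\sum_m\sech\pi(2\alpha m+k)$ on compact sets avoiding the poles (immediate from the exponential decay of the summands), which justifies the meromorphic continuation and the term-by-term evaluation at $k=\alpha+\ii/2$.
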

\begin{proof}
In the theory of elliptic functions the letter $k$ is standardly used to denote elliptic modulus (see below). Thus, in order to avoid a notation clash, we will temporarily use $x$ instead of $k$ to denote the variable in \eqref{eq:d14}.

We start by recalling the definitions of the Jacobi elliptic functions $\dn$ and $\cn$. 
These are doubly periodic, meromorphic functions on the complex plane. Their properties are determined by a parameter $k \in [0, 1]$ known as the \emph{elliptic modulus}. The \emph{complementary modulus} is defined by $k' = \sqrt{1-k^2}$. Periodicity is described by the \emph{quarter periods}, $K$ and $K'$, which are given by the complete elliptic integral of the first kind: $K = K(k)$,
\[
K(k)=\int_0^{\pi/2}\frac{\dd\theta}{\sqrt{1-k^2(\sin\theta)^2}}\, ,
\]
and $K' = K(k')$.
The functions $\dn (u,k)$ and $\cn (u,k)$ have the following standard Fourier series representations:
\begin{align} \label{eq:dn_series}
\dn (u, k) &= \frac{\pi}{2K} + \frac{\pi}{K}\sum_{n=1}^{\infty} \sech(\tfrac{n\pi K'}{K}) \cos\left(\tfrac{n\pi u}{K}\right)\, ,
\\ 
\label{eq:cn_series}
    \cn (u, k) &= \frac{\pi}{kK} \sum_{n=0}^\infty \sech\left( \tfrac{(2n+1)\pi K'}{2K} \right) \cos\left(\tfrac{(2n+1)\pi u}{2K}\right)\, .
\end{align}
These functions are related by several fundamental identities, including the following key relation:
\begin{equation} \label{eq:key_identity}
\dn(u, k)^2 - k^2 \cn(u, k)^2 = (k')^2.
\end{equation}

We return to \eqref{eq:d14} and denote 
\[
F(x) = \sum_{n=-\infty}^\infty \sech\pi\left(\tfrac{2\pi}{\tau}n+x\right), \quad 
G(x) = \sum_{n=-\infty}^\infty (-1)^n \sech\pi\left(\tfrac{2\pi}{\tau}n+x\right)\, .
\]
Applying the Poisson summation formula, we obtain the Fourier series for these functions:
\begin{align*} 
F(x) &= \frac{\tau}{2\pi} \left(1 + 2\sum_{n=1}^\infty \sech\left(\tfrac{n\tau}{2}\right) \cos(n\tau x)\right)\, ,
\\ 
G(x) &= \frac{\tau}{\pi} \sum_{n=0}^\infty \sech\left(\tfrac{(2n+1)\tau}{4}\right) \cos\left((n+1/2)\tau x\right)\, .
\end{align*}
We can now identify these series with the elliptic functions defined in \eqref{eq:dn_series} and \eqref{eq:cn_series} by performing a term-by-term comparison. 
Matching the arguments of $\sech$ and $\cos$, we find 
\[
\frac{K'}{K} = \frac{\tau}{2\pi}\quad \text{ and }\quad u = \frac{K\tau x}{\pi}
\]
and therefore 
\[
F(x) = \frac{K\tau}{\pi^2} \dn \left(\tfrac{K\tau x}{\pi}, k\right) 
\quad \text{and} \quad 
G(x) = \frac{kK\tau}{\pi^2} \cn \left(\tfrac{K\tau x}{\pi}, k\right)\, ,
\]
where $k$ is the modulus fixed by the condition $\frac{K'(k)}{K(k)} = \frac{\tau}{2\pi}$. 
From the identity \eqref{eq:key_identity} we obtain 
\[
F(x)^2-G(x)^2=\left(\frac{K\tau k'}{\pi^2}\right)^2,
\]
and so the right-hand side in \eqref{eq:d14} is constant. 
The proof of Lemma~\ref{lma:b3} is complete. 
\end{proof}

\end{document}